\documentclass{amsart}

\usepackage{amssymb,amsmath,amsthm}
\usepackage{mathrsfs}
\usepackage{hyperref}
\usepackage{mathtools}
\usepackage{color}
\usepackage{verbatim}
\usepackage{tikz-cd}

\mathtoolsset{showonlyrefs}

\renewcommand{\[}{\begin{equation}\begin{aligned}}
\renewcommand{\]}{\end{aligned} \end{equation}}

\renewcommand{\P}{\mathbb{P}}
\newcommand{\R}{\mathbb{R}}
\newcommand{\C}{\mathbb{C}}
\newcommand{\RP}{\mathbb{RP}}

\newcommand{\fix}{\operatorname{Fix}}

\usepackage{todonotes}

\newtheorem{thm}{Theorem}
\newtheorem{prop}[thm]{Proposition}
\newtheorem{lemma}[thm]{Lemma}

\newtheorem{cor}[thm]{Corollary}

\theoremstyle{remark}
\newtheorem{remark}[thm]{Remark}
\newtheorem{ex}[thm]{Example}

\theoremstyle{definition}
\newtheorem{definition}[thm]{Definition}
\newtheorem{assumption}[thm]{Assumption}

\numberwithin{equation}{section}
\numberwithin{thm}{section}

\title[Nowhere vanishing harmonic $1$-forms]{Nowhere-vanishing
  harmonic 1-forms on real loci of K3-fibred Calabi-Yau 3-folds}

\author{Shih-Kai Chiu}
\address{Department of Mathematics, University of California, Irvine, Irvine CA 92697, USA}
\email{shihkaic@uci.edu}

\author{Daniel Platt}
\address{Department of Mathematics, Imperial College London, 180 Queen’s Gate, South Kensington, London SW7 2RH, UK}
\email{d.platt@imperial.ac.uk}

\author{Calum Spicer}
\address{Department of Mathematics, King’s College London, Strand, London WC2R 2LS, UK}
\email{calum.spicer@kcl.ac.uk}

\begin{document}

\begin{abstract}
  We develop an analytic construction of nowhere-vanishing harmonic
  $1$-forms on real loci of K3-fibred Calabi-Yau $3$-folds with
  collapsing Ricci-flat K\"ahler metrics. We apply our construction to
  examples whose real loci have connected components diffeomorphic to
  $S^1\times S^2$ and to both trivial and nontrivial mapping tori. As
  an application, we produce examples of compact $7$-manifold with
  holonomy $G_2$ via the Joyce-Karigiannis construction.
\end{abstract}

\maketitle

\section{Introduction}

In 1996, Joyce~\cite{JoyceI, JoyceII} constructed the first examples
of compact Riemannian $7$-manifolds with holonomy equal to
$G_2$. Since then, constructing new examples of compact manifolds with
holonomy $G_2$ has been one of the central challenges in
$G_2$-geometry. Joyce’s first examples were obtained by a generalised
Kummer construction, in analogy with the Kummer construction of
Ricci-flat K\"ahler metrics on K3 manifolds
\cite{Topiwala,DonaldsonKummer}. In 2003, Kovalev \cite{Kovalev}
constructed new examples via the twisted connected sum construction;
see also \cite{KovalevLee,CHNP, Nordstrom} for more examples and
generalisations. In 2021, Joyce and Karigiannis
\cite{JoyceKarigiannis} introduced a new construction of compact $G_2$
manifolds, which produces many new examples; see
Section~\ref{sec:preliminaries} for an overview of the results about
this construction.

In \cite[Section 7.5]{JoyceKarigiannis}, Joyce and Karigiannis
describe a conjectural application of their construction, which we now
explain. Let $X$ be a Calabi-Yau $3$-fold with an anti-holomorphic
involution $\sigma: X \to X$. Let $\omega$ be the unique Ricci-flat
K\"ahler metric in a K\"ahler class that is anti-invariant under the
involution, namely $\sigma^*[\omega] = -[\omega]$. It follows from
uniqueness of the Ricci-flat K\"ahler metric that the involution is
also anti-symplectic, that is, $\sigma^*\omega = -\omega$.

Let $\Omega$ be a holomorphic volume form on $X$. After multiplying
$\Omega$ by a unit complex number, we may assume that
$\sigma^*\Omega = \bar\Omega$. Let us further assume that the fixed
locus $L$ is nonempty. It follows that each connected component of $L$
is a special Lagrangian submanifold.

Now, let us consider the product $M = S^1 \times X$ equipped with the
torsion-free $G_2$ structure
$\varphi = d\theta\wedge\omega + \operatorname{Re}\Omega$. There is a
natural $\mathbb{Z}_2$-action generated by
$(e^{i\theta}, x) \mapsto (e^{-i\theta}, \sigma(x))$. The $3$-form
$\varphi$ is invariant under the $\mathbb{Z}_2$-action, and hence
descends to a torsion-free $G_2$-structure on the quotient
$M/\mathbb{Z}_2$, whose singular set is diffeomorphic to two copies of
$L$. Provided $L$ admits a nowhere-vanishing harmonic $1$-form, the
Joyce-Karigiannis construction resolves the singularities by gluing in
Eguchi-Hanson spaces according to this harmonic $1$-form; see
Section~\ref{sec:preliminaries} for more details. Thus, in this
setting, the main difficulty is to prove the existence of such a
$1$-form on $L$. As explained in \cite[Section~7.5]{JoyceKarigiannis},
this is a difficult problem, since the induced metric on $L$ is in
general not explicit enough.

The main purpose of this paper is to demonstrate that there is a large
class of examples via collapsing the Ricci-flat K\"ahler metrics on
Calabi-Yau $3$-folds fibred by K3 surfaces.  Our main theorem is the
following:



\begin{thm}
  \label{thm:main1}
  Let $X$ be a Calabi-Yau $3$-fold and let $Y$ denote the complex
  projective line $\mathbb{CP}^1$ equipped with its Fubini-Study
  metric $\omega_Y$. Suppose $\pi: X \to Y$ is a Lefschetz K3
  fibration and $\sigma: X \to X$ is an anti-holomorphic involution
  such that $\pi$ is equivariant with respect to $\sigma$ and complex
  conjugation on $Y$. Assume that the fixed locus $L$ is nonempty and
  contains no critical point of $\pi$. Let $\omega_X$ be a K\"ahler
  metric on $X$ such that $\sigma^*[\omega_X] = -[\omega_X]$.  For
  $t>0$, let $\tilde\omega_t$ be the unique Ricci-flat K\"ahler metric
  in the K\"ahler class $[\omega_X] + (1/t)\pi^*[\omega_Y]$, and let
  $\tilde g_t$ be the associated Riemannian metric. Then for all
  sufficiently small $t>0$, each connected component of $L$ admits a
  nowhere-vanishing harmonic $1$-form with respect to the induced
  metric $\tilde g_t|_L$.
\end{thm}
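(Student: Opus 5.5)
Since $L$ contains no critical point of $\pi$ and $\pi$ is $\sigma$-equivariant, $\pi$ maps $L$ into the fixed locus $\RP^1 \subset \mathbb{CP}^1$ of complex conjugation. Each component $L_i$ of $L$ is a compact $3$-manifold. At each point of $L$, $d\pi$ is surjective onto $T\mathbb{CP}^1$. Restricted to the real parts, the differential of $\pi|_L: L \to \RP^1$ is the real part of a surjective complex-linear map, so $\pi|_L$ is a submersion onto $\RP^1 \cong S^1$. Each fibre is the real locus of a smooth K3 fibre, a special Lagrangian surface there. In particular $L_i \to S^1$ is a fibre bundle, a mapping torus, and the pullback $\alpha = \pi^*d\theta$ of the standard angular form is a closed, nowhere-vanishing $1$-form on $L_i$. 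Equivalently, $\alpha$ is the restriction of $d\theta$ along the real circle, where $d\theta$ is, up to a positive function, the $\omega_Y$-unit form on $\RP^1$.

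The plan is to show that for small $t$ the $\tilde g_t|_L$-harmonic representative $h_t$ of the class $[\alpha] \in H^1(L_i;\R)$ stays $C^0$-close, after rescaling, to a nowhere-vanishing form. Write $h_t = \alpha + df_t$ with $d^{*_t}(\alpha + df_t)=0$. As $t\to 0$, the collapsing results for K3-fibred Calabi-Yau metrics (Gross--Tosatti--Zhang, Tosatti--Zhang, Hein--Tosatti) say that on compact subsets away from singular fibres, $\tilde\omega_t$ is $C^\infty_{loc}$-close to the semi-flat model $\omega_{SF,t} = \pi^*\omega_{\mathrm{WP}}/t + \omega_{F,y}$. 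Here $\omega_{\mathrm{WP}}$ is the generalized Kähler--Einstein (Weil--Petersson-type) metric on the base and $\omega_{F,y}$ is the Ricci-flat fibre metric in $[\omega_X|_{X_y}]$. Since $\pi(L)\subset \RP^1$ avoids critical values, $L$ lies in such a compact region, and the estimates hold uniformly on $L$. On $L$ this gives $\tilde g_t|_L \approx t^{-1}\pi^*g_B + g_{F,y}|_{L_y}$: a long circle of length $\sim t^{-1/2}$ with fixed fibres.

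Rescale the base coordinate $s = t^{-1/2}\theta$ (or equivalently consider adiabatic limits). The harmonic equation $d^*(\alpha+df)=0$ becomes an elliptic equation whose fibre part dominates. At leading order $f$ must be fibrewise constant. The base part is then a one-dimensional equation on the circle, namely $(\rho(\theta)\, u')' = -(\rho\,\text{(density)})'$, where $\rho(\theta)$ is the fibre volume times the base metric factor. Its solution makes $\alpha+du\, d\theta$ equal to $c\,d\theta/(\rho\sqrt{g_B})$ in suitable terms: a positive multiple of $d\theta$ divided by a positive function, hence nowhere vanishing. So the model harmonic form is $h_0 = c\,\phi(\theta)\,d\theta$ with $\phi>0$ and $c\neq 0$, since $[\alpha]\neq 0$ (it integrates to $2\pi$ over a section).

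The main step, and the main obstacle, is the error estimate: $\|h_t - h_0\|_{C^0(\tilde g_t)} = o(|h_0|_{\tilde g_t})$, uniformly on $L_i$. I would construct an approximate harmonic form from $h_0$ plus a fibrewise correction of size $O(t^{1/2})$, or $O(\epsilon_t)$ with $\epsilon_t\to 0$ the collapsing rate. Its $d^*$-error is controlled by the decay rate of $\tilde g_t - g_{SF,t}$ and by the variation of the fibre metrics in $\theta$. I would then invert the Hodge Laplacian on exact forms, using a uniform spectral gap. The fibre part has a first eigenvalue bounded below, and the base part has eigenvalue $\sim t$ on a circle of length $t^{-1/2}$. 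The difficulty is that the smallest eigenvalue degenerates like $t$, so the error must be $o(t)$ in the right weighted norms, or the projection onto fibrewise-constant functions must be handled separately by the explicit ODE solution above. After this, elliptic regularity on unit-size balls (which have bounded geometry after rescaling) upgrades $L^2$ to $C^0$ bounds. This yields $h_t = h_0 + o(|h_0|)$ pointwise, so $h_t$ is nowhere vanishing for small $t$.
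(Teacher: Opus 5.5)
Your architecture matches the paper's. You pull back a form $\varphi(s)\,ds$ from the real circle, with coefficient inversely proportional to the fibre volume, and correct it by $du$: the fibrewise mean-zero part is handled with the spectral gap of the fibres, and the fibrewise mean by an ODE along $\gamma$. However, the step you flag as the main obstacle is where the argument actually lives, and as you state it, it does not close.

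As you set it up, $h_0$ is determined by the leading-order semi-flat model. But $\tilde g_t|_L$ is only $O(t^{1/2})$-close to that model in $C^{k,\alpha}$ at unit scale. So the actual fibre volumes (and the weight $\lambda$) differ from the model ones by $O(t^{1/2})$, with $s$-derivatives of the same order. The collapsing estimates therefore only give that the fibrewise average of $d^*h_0$ is $O(t^{1/2})$. That is neither $o(t)$ nor the $o(t^{1/2})$ that would actually suffice for $du$. Since $\gamma$ has length $\ell\sim t^{-1/2}$, a crude bound on the base ODE gives $|u'|\lesssim \ell\,\|\bar f\|\sim 1$, a correction as large as $h_0$ itself. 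So your first remedy (error $o(t)$) is unavailable, and your second does not work if the ODE is only estimated.

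The missing idea is to make the cancellation exact for the actual metric. The $\sigma$-invariant $g_{SRF}$-horizontal lift of $\partial_s$ is tangent to $L$. Using it, write $\tilde g_t|_L=\lambda\,ds^2+h_{ij}e^ie^j$ with $e^i=dx^i+\beta^i ds$, where $\lambda-1$ and $\beta$ are $O(t^{1/2})$. The divergence theorem on the closed fibres then gives the identity $\int_{L_s}\lambda^{1/2}\,d^*(\varphi\,ds)=-\partial_s\int_{L_s}\lambda^{-1/2}\varphi$. Choosing $\varphi(s)=\bigl(\int_{L_s}\lambda^{-1/2}\bigr)^{-1}$ kills the weighted fibre average identically, and hence makes the unweighted average $O(t)$. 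The base ODE then costs only $t^{-1/2}\cdot t=t^{1/2}$.

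The rest of the paper's proof is a H\"older-space parametrix: exponentially decaying inverses on the model cylinders $\mathbb{R}\times L_{s_i}$, cut off at scale $|\log t|$, plus a Neumann series contracting by $t^{1/2}$. Your second remedy can be rescued, but only if you solve the exact weighted-averaged equation $-W^{-1}(\tilde V u')'=\bar f$ for the true metric. You must then use that its right-hand side is a total $s$-derivative of a quantity that is $O(t^{1/2})$ after subtracting a constant, rather than bounding it by $\ell\|\bar f\|$. This amounts to the paper's choice of $\varphi$.
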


\begin{remark} In this paper, Calabi-Yau $n$-folds are
    assumed to be compact and simply connected; in particular,
    Calabi-Yau $3$-folds have holonomy $SU(3)$ and satisfy
    $h^{1,0} = h^{2,0} = 0$.
\end{remark}

\begin{remark}
  In Theorem~\ref{thm:main1}, the collapsing Ricci-flat K\"ahler
  metric is $t\,\tilde\omega_t$. The family $(X,t\,\tilde g_t)$
  converges in the Gromov-Hausdorff sense to $Y$ equipped with the
  generalized K\"ahler-Einstein metric $\tilde\omega_Y$, whose Ricci
  curvature is the Weil-Peterson metric of the fibration. We refer the
  reader to \cite{Tosatti} for an overview about this convergence
  result.
\end{remark}

\begin{remark}
  To our knowledge, this is the first analytic construction of
  nowhere-vanishing harmonic $1$-forms on real loci of Calabi-Yau
  $3$-folds; compare the numerical evidence in
  \cite{DouglasPlattQiBarbosa}.
\end{remark}

We briefly explain the idea of the proof. For simplicity, let us
assume $L$ has one connected component. Then $L$ fibres
over $S^1$, the real locus of $\mathbb{CP}^1$, and each
fibre is a special Lagrangian submanifold in the corresponding K3
fibre. The idea is to pull back a suitable nowhere-vanishing $1$-form
$\alpha$ defined on $S^1$ and then correct it by an exact $1$-form
$du$ to obtain a harmonic $1$-form.

The main point is to choose the $1$-form $\alpha$ carefully. Recent
regularity results for collapsing Ricci-flat K\"ahler metrics on
K3-fibered Calabi-Yau $3$-folds \cite{Li, HeinTosatti, ChiuLin},
together with a related construction of special Lagrangian
submanifolds~\cite{ChiuLin}, imply that locally the induced metric
$\tilde g_t|_L$ is asymptotic to a product metric as $t\to 0^+$;
globally, however, $\tilde g_t|_L$ is not approximately a product. To
account for this, we choose $\alpha$ whose coefficient with respect to
the arc length parameter is roughly given by the inverse of the area
of the corresponding special Lagrangian fibre. The effect is that the
fibrewise average of $d^*\alpha$ is much smaller than the size of
$d^*\alpha$ itself.

This cancellation property enables us to solve $\Delta u = d^*\alpha$
with uniform estimates for all sufficiently small $t>0$, so that $du$
is much smaller than $\alpha$. Although the Hodge decomposition on $L$
gives the correction term $du$ for each $t$, it does not provide the
uniform estimates needed to control $du$ as $t \to 0^+$. Moreover, it
does not explain the crucial choice of $\alpha$. Instead, we solve the
equation by a parametrix construction adapted to the geometry of
$\tilde g_t|_L$. The perturbation $\alpha-du$ then gives the desired
nowhere-vanishing harmonic $1$-form for sufficiently small $t>0$. We
refer the reader to Section~\ref{sec:analysis} for the proof of
Theorem~\ref{thm:main1}.

We apply Theorem~\ref{thm:main1} to find examples of Calabi-Yau
$3$-folds whose real loci support nowhere-vanishing harmonic
$1$-forms.

\begin{thm}
  \label{thm:main2}
  There exist K3-fibred Calabi-Yau $3$-folds with anti-holomorphic
  involutions whose real loci support nowhere-vanishing harmonic
  $1$-forms. These examples include complete intersections, bidegree
  $(2,4)$ hypersurfaces of $\mathbb{P}^1 \times \mathbb{P}^3$,
  hypersurfaces in scrolls, and double covers. In particular, the real
  loci obtained this way include connected components diffeomorphic to
  $S^1 \times S^2$, $S^1 \times \Sigma_g$ and nontrivial mapping tori
  with finite monodromy.
\end{thm}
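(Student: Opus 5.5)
The proof reduces Theorem~\ref{thm:main2} to Theorem~\ref{thm:main1}. For each family we exhibit a Calabi-Yau $3$-fold $X$ with a Lefschetz K3 fibration $\pi: X\to\mathbb{CP}^1$ and an anti-holomorphic involution $\sigma$ covering complex conjugation. We also check three things: the real locus $L=X(\R)$ is nonempty, it contains no critical point of $\pi$, and some Kähler class is anti-invariant under $\sigma$. The last condition is automatic once everything is defined over $\R$. Indeed, if $\omega_X$ is the restriction of a Fubini-Study type form on the ambient toric variety, which is preserved by real conjugation up to sign, then $\sigma^*\omega_X=-\omega_X$. Theorem~\ref{thm:main1} then gives the harmonic $1$-forms, and it remains to identify the topology of the components of $L$.

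The concrete steps are as follows.

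(1) Take $X\subset \mathbb{P}^1\times\mathbb{P}^3$ defined by a real bidegree $(2,4)$ polynomial $F=s_0^2A+s_0s_1B+s_1^2C$, with $A,B,C$ real quartics. Treat complete intersections in $\mathbb{P}^1\times\mathbb{P}^n$, hypersurfaces in $\mathbb{P}^1$-scrolls $\mathbb{P}(\mathcal{O}\oplus\mathcal{O}(a_1)\oplus\cdots)$, and double covers of $\mathbb{P}^1\times\mathbb{P}^2$ branched along real bidegree $(2,6)$ curves in the same way. Adjunction gives $K_X=0$. Lefschetz--hyperplane type arguments give simple connectivity and $h^{1,0}=h^{2,0}=0$.

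(2) Show that the fibration is Lefschetz for generic real coefficients. Real polynomials are Zariski dense in the complex parameter space, and the Lefschetz condition is Zariski open and nonempty, as in the classical complex setting. Hence a generic real choice is Lefschetz.

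(3) Arrange that $L$ avoids the critical points. The critical values of $\pi$ form a finite set stable under conjugation. We need the critical points lying over real critical values to be non-real; for example, a nondegenerate node whose local real form is $x_1^2+x_2^2+x_3^2+y^2=0$ has no real points nearby. I expect this step to be the main obstacle. The cleanest way around it is to choose a real family for which the real fibres $X_s(\R)$, $s\in\RP^1$, are all smooth. One way to do this is positivity: for instance, take $F(s,x)=Q(x)+\epsilon\,G(s,x)$ and perturb from a reducible or definite configuration. If the real fibres are smooth, every real critical value has only complex-conjugate pairs of critical points over it. Another option is to push all critical values off $\RP^1$. That requires showing the discriminant curve can avoid the real locus of the parameter line, i.e.\ finding real deformations under which all critical values come in conjugate pairs; this is checked degree by degree or in explicit examples.

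(4) Identify the topology. With no real critical points, $\pi|_L: L\to\RP^1\cong S^1$ is a proper submersion, hence a fibre bundle. Each component of $L$ is a mapping torus of a real K3 fibre component $\Sigma$. For a real quartic surface, the components are spheres or orientable surfaces $\Sigma_g$, by the Harnack-type classification. If the real fibres are constant (e.g.\ $A=C$ definite) the bundle is trivial, giving $S^1\times S^2$ and $S^1\times\Sigma_g$. For nontrivial mapping tori with finite monodromy, take a double cover or a scroll example in which, after going once around $\RP^1$, the real fibre returns via a finite-order real automorphism (e.g.\ a coordinate permutation/sign change induced by a twist $s\mapsto$ loop in the family), so the monodromy is that finite-order diffeomorphism.
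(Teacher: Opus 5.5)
Your skeleton is the paper's: reduce to Theorem~\ref{thm:main1}, get the Lefschetz property by a generic \emph{real} perturbation, keep the discriminant off $\RP^1$, and read the topology off the bundle $L\to\RP^1$. The gap is that the statement is an existence claim whose content lies in the examples, and you never produce them.

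\textbf{Generic Lefschetz property and the base section.} Your step (2) asserts that ``at worst one ODP per fibre'' is a nonempty Zariski-open condition ``as in the classical setting''. Here, however, the fibres trace a conic (or a higher-degree curve) in $|\mathcal{O}_{\P^3}(4)|$, not a pencil. Nonemptiness is exactly what Proposition~\ref{prop_lefschetz} proves, via Katz's codimension-$2$ bound for the bad locus in $H^0(X_y,\eta|_{X_y})$ and surjectivity of $H^0(X,\eta)\to H^0(X_y,\eta|_{X_y})$.

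The real version then needs an explicit real section $s_0$ with smooth zero locus, nonempty real points, and discriminant disjoint from $\RP^1$. All three are open conditions, after which $s_0+\epsilon s_1$ with $s_1$ real and generic works. Finding such an $s_0$ is the work you defer to ``degree by degree or in explicit examples''. The paper does it with, e.g., $\sum\pm q_i(s,t)x_i^4$ ($q_i$ positive definite with distinct simple zeros) or $(s^2+t^2)h+\epsilon t^2k$, checking smoothness of the total space by hand. Your constant-fibre choice $F=(s_0^2+s_1^2)A$ is reducible with singular total space, so it too must be perturbed and rechecked.

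\textbf{Step (3).} Your first option there (only the real points of the fibres smooth) is weaker than what Section~\ref{sec:analysis} actually uses. Assumption~\ref{assumption:1} requires $\gamma\cap S=\emptyset$, because Proposition~\ref{prop:cylindrical} and $\omega_{SRF}$ need smooth fibres over every point of $\gamma$. After a generic perturbation with at most one node per fibre the two conditions coincide, since a unique node over a real point is $\sigma$-fixed, but you should state it in that form.

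\textbf{Nontrivial mapping torus.} This is the more serious gap. ``A coordinate permutation/sign change induced by a twist'' is not a construction. For hypersurfaces in $\P^1\times\P^3$ or double covers of $\P^1\times\P^2$, the real locus lies in the product $\RP^1\times\RP^n$. You would have to write down a family, with the constrained degree in $(s,t)$, and prove that its monodromy is nontrivial and of finite order.

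The missing idea is the paper's scroll $\mathbb{F}=\P_{\P^1}(\mathcal{O}(1)\oplus\mathcal{O}\oplus\mathcal{O}(1)\oplus\mathcal{O})$. The real part of $\mathcal{O}(1)$ over $\RP^1$ is the M\"obius bundle, so going once around $\RP^1$ flips the signs of $x_0$ and $x_2$. For $Z(q_0x_0^4+x_1^4-q_2x_2^4-x_3^4)\subset\mathbb{F}$ this gives $L\cong T^2\rtimes_h S^1$, with $h$ of order two acting by $-\mathrm{Id}$ on $H_1(T^2)$. Hence $b^1(L)=1$ and $L\not\cong T^3$.

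\textbf{Double cover.} The double cover of $\P^1\times\P^2$ must be branched along a smooth divisor of bidegree $(4,6)$, which is a surface. Your bidegree $(2,6)$ gives $K=\rho^*\mathcal{O}(-1,0)\neq 0$. In addition, the Lefschetz and real-discriminant properties must be transferred from the branch divisor to the cover, as in Lemma~\ref{lefschetz_lemma_branched_cover}, rather than treated ``in the same way'' as the other families.
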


An immediate corollary of Theorem~\ref{thm:main2} is the existence of
families of pairwise disjoint special Lagrangian submanifolds:

\begin{cor}
  There exist $1$-parameter families of pairwise
  disjoint special Lagrangian submanifolds in certain K3-fibred
  Calabi-Yau 3-folds. Examples include $S^1\times S^2$,
  $S^1 \times \Sigma_g$ and nontrivial mapping tori of finite
  monodromy.
\end{cor}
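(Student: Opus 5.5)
The corollary should follow from Theorem~\ref{thm:main2} and the Joyce--Karigiannis picture recalled in the introduction. The tool is the classical fact that a nowhere-vanishing harmonic $1$-form on a compact special Lagrangian $L$ gives, via McLean's deformation theory, a one-parameter family of special Lagrangian deformations, and nonvanishing makes the deformed submanifolds disjoint.

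First I fix one of the examples in Theorem~\ref{thm:main2}: a Calabi--Yau $3$-fold $X$ with anti-holomorphic involution $\sigma$, a Ricci-flat metric $\tilde\omega_t$ with $\sigma^*\tilde\omega_t=-\tilde\omega_t$, and $\Omega$ normalised so that $\sigma^*\Omega=\bar\Omega$. As explained in the introduction, each connected component $L_0$ of $L=\operatorname{Fix}(\sigma)$ is then a compact special Lagrangian. By Theorem~\ref{thm:main1}, for small $t$ there is a harmonic $1$-form $\xi$ on $L_0$, with respect to $\tilde g_t|_L$, that vanishes nowhere. Harmonic $1$-forms on $L_0$ are closed and coclosed, and McLean's theorem identifies them with the tangent space to the moduli space of special Lagrangian deformations. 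That moduli space is a smooth manifold of dimension $b^1(L_0)$, since the deformations are unobstructed. So there is a smooth curve $s\mapsto L_s$, $s\in(-\varepsilon,\varepsilon)$, of special Lagrangians with $L_s=\exp_{L_0}(sJ\xi^\sharp+O(s^2))$ in a Weinstein tubular neighbourhood. Here I use the standard identification of the normal bundle with $T^*L_0$ via $v\mapsto \iota_v\omega|_{L_0}$.

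Next I show the $L_s$ are pairwise disjoint. In the Weinstein neighbourhood, $L_s$ is the graph of a closed $1$-form $\eta_s$ with $\eta_s=s\xi+O(s^2)$ in $C^1$, smoothly in $s$. Then $\partial_s\eta_s=\xi+O(s)$, which is nowhere zero for $|s|$ small because $\xi$ is nowhere zero and $L_0$ is compact. So at each point $x\in L_0$ the map $s\mapsto\eta_s(x)$ is an injective immersion into $T^*_xL_0$. Hence for $s\neq s'$ small, the graphs of $\eta_s$ and $\eta_{s'}$ have no common point, because they would have to meet over the same base point $x$ and there $\eta_s(x)\neq\eta_{s'}(x)$. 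The main point needing care is uniformity: the $O(s)$ error must be controlled in $C^0$ uniformly over $L_0$. This follows from smooth dependence of McLean's family on $s$, obtained by the implicit function theorem in Hölder spaces, together with compactness.

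Finally, the topological types come directly from Theorem~\ref{thm:main2}, since each $L_s$ is diffeomorphic to $L_0$. So we get families with $L_0\cong S^1\times S^2$, $S^1\times\Sigma_g$, or a nontrivial mapping torus with finite monodromy. I expect no real obstacle beyond Theorem~\ref{thm:main2}; the nonvanishing of $\xi$ is exactly what gives disjointness.
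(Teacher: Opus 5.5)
Your proposal is correct and is essentially the paper's argument: the paper justifies this corollary in a single line, as a direct application of a quantitative form of McLean's theorem to the nowhere-vanishing harmonic $1$-form from Theorems~\ref{thm:main1} and~\ref{thm:main2} at a fixed small $t$, and your expansion $\eta_s=s\xi+O(s^2)$ in a Weinstein neighbourhood supplies exactly the detail behind that line. One small point: nonvanishing of $\partial_s\eta_s$ alone does not give injectivity of $s\mapsto\eta_s(x)$; what does is your uniform estimate $\partial_s\eta_s=\xi+O(s)$, since for $|s|$ small it implies $\langle\eta_s(x)-\eta_{s'}(x),\xi(x)\rangle\neq 0$ whenever $s\neq s'$.
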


\begin{remark}
  This is a direct application of a quantitative version of McLean's
  theorem~\cite{McLean}. Such a phenomenon is already familiar for
  special Lagrangian submanifolds diffeomorphic to $T^3$; see
  \cite{LiSYZ}. To our knowledge, however, these are the
  first known examples of local families of pairwise disjoint special
  Lagrangian submanifolds diffeomorphic to $S^1 \times S^2$.
\end{remark}

Using the Joyce-Karigiannis construction, we produce examples of
compact, full holonomy $G_2$-manifolds from Lefschetz K3 fibrations
with nonempty real loci.

\begin{cor}
  \label{cor:main2}
  There exist compact manifolds with holonomy $G_2$ obtained by
  applying the Joyce-Karigiannis construction to
  the product of a circle with K3-fibred Calabi-Yau $3$-folds.  In particular, one
  can construct simply-connected $G_2$ manifolds with
  \[
    (b^2, b^3)=(2,95),\,(2,91),\,(20,109),\,(14,111),\,(2,91),\,(2,133).
  \]
\end{cor}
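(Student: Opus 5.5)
The plan is to take the $G_2$-manifolds from the Joyce--Karigiannis construction applied to $M=S^1\times X$ modulo $\mathbb{Z}_2$. Here $X$ runs through the K3-fibred Calabi--Yau $3$-folds of Theorem~\ref{thm:main2}. The argument has three parts: existence of the torsion-free structure, full holonomy, and the Betti numbers.

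\textbf{Existence.} For each example of Theorem~\ref{thm:main2}, fix $t>0$ small as in Theorem~\ref{thm:main1}. We use the Ricci-flat metric $\tilde\omega_t$, whose class is $\sigma$-anti-invariant, so $\sigma^*\tilde\omega_t=-\tilde\omega_t$. Normalise $\Omega$ so that $\sigma^*\Omega=\bar\Omega$. The orbifold $M/\mathbb{Z}_2$ with $\varphi=d\theta\wedge\tilde\omega_t+\operatorname{Re}\Omega$ then has singular set two copies of $L$, sitting over $\theta=0$ and $\theta=\pi$. Theorem~\ref{thm:main1} gives a nowhere-vanishing harmonic $1$-form on each component of $L$. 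Feeding these into the main theorem of \cite{JoyceKarigiannis} (recalled in Section~\ref{sec:preliminaries}) yields a compact resolution $N$ with a family of torsion-free $G_2$-structures.

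\textbf{Holonomy and simple connectivity.} By Joyce's criterion, a compact torsion-free $G_2$-manifold has holonomy exactly $G_2$ iff $\pi_1$ is finite, so it suffices to show $\pi_1(N)=1$.
\begin{itemize}
\item Since $X$ is simply connected, $\pi_1$ of the orbifold is generated by the circle direction modulo the involution.
\item Because $L\neq\emptyset$, this orbifold fundamental group is killed: a loop in $\theta$ based at a fixed point is nullhomotopic after quotienting, as in \cite[Section~7.5]{JoyceKarigiannis}.
\item The resolution replaces the singular set by an $S^2$-bundle (Eguchi--Hanson) over it, and a van Kampen argument gives $\pi_1(N)=1$.
\end{itemize}

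\textbf{Betti numbers.} Use the formulae of \cite{JoyceKarigiannis} for the resolution:
\begin{align}
b^k(N)&=b^k\big((S^1\times X)/\mathbb{Z}_2\big)+b^{k-2}(L_0\sqcup L_\pi), \\
b^k\big((S^1\times X)/\mathbb{Z}_2\big)&=b^k(X)^{\sigma}+b^{k-1}(X)^{-\sigma}.
\end{align}
Here $L_0,L_\pi$ are the two copies of $L$ and $\pm\sigma$ denotes the $(\pm1)$-eigenspace of $\sigma^*$. This gives
\begin{align}
b^2(N)&=b^2(X)^{\sigma}+b^1(X)^{-\sigma}+2b^0(L), \\
b^3(N)&=b^3(X)^{\sigma}+b^2(X)^{-\sigma}+2b^1(L).
\end{align}
Next compute the $\sigma^*$-eigenspace dimensions on $H^2$ and $H^3$. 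On $H^2=H^{1,1}$, $\sigma^*$ acts with eigenvalue $-1$ on classes coming from the ambient (toric or projective) hyperplane classes. On $H^3$ the $\pm1$ eigenspaces each have dimension $b^3/2=1+h^{2,1}$. The topology of $L$ comes from the explicit descriptions in the proof of Theorem~\ref{thm:main2}: components $S^1\times S^2$, $S^1\times\Sigma_g$, or mapping tori, each with $b^0=1$ and $b^1$ computable.

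As an example, take the $(2,4)$ hypersurface in $\mathbb{P}^1\times\mathbb{P}^3$, with $h^{1,1}=2$ and $h^{2,1}=86$. Since $b^1(X)=0$ and $H^2(X)^\sigma=0$, the formula gives $b^2(N)=2b^0(L)$ and $b^3(N)=88+2+2b^1(L)$. The tabulated values are then matched case by case.

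\textbf{Main obstacle.} I expect the hard step to be this bookkeeping: pinning down the $\sigma^*$-action on $H^2$ and $H^3$, and the number and type of components of $L$, for each listed example. I would first try the Lefschetz fixed point / Smith theory relations as a consistency check.
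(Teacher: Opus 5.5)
Your proposal follows the paper's route. The Betti numbers come from Corollary~\ref{cor:joyce-karigiannis-for-cy}: K\"unneth on $\iota$-invariants, $H^2(X)^\sigma=0$ because $H^{1,1}(X)$ is spanned by the two ambient classes on which conjugation acts by $-1$, and $\dim H^3(X)^\sigma=1+h^{2,1}$. These are combined with the Hodge numbers and real-locus topology computed example by example in Section~\ref{sec:examples}. Your argument that the quotient is simply connected because $L\neq\emptyset$ is a useful supplement. The paper leaves this implicit and only asserts full holonomy via the $\pi_1$ criterion of Theorem~\ref{thm:joyce-karigiannis}.

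One correction: in your worked example $1+h^{2,1}=87$, not $88$, so the formula is $b^3(N)=89+2b^1(L)$. With $88$ every value comes out one too large; for instance you would get $96$ rather than $95$ for $L\cong T^3$. With this fixed, the table follows from $(b^0(L),b^1(L))=(1,3),(1,1),(10,10),(7,11),(1,1),(1,1)$ in the six examples. Here $h^{2,1}=86$ in the first five cases and $h^{2,1}=128$ for the double cover.
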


To our knowledge, $G_2$-manifolds with
$(b^2,b^3) = (20,109),(14,111),(2,133)$ do not appear in the published
lists of known examples (namely \cite[Table 1]{KovalevLee},
\cite[Theorems 4.2, 4.6, 4.11]{reidegeld}, \cite[Table 2]{JoyceII}),
so our examples may be new.

We construct the examples above by choosing suitable K3 fibrations
with the compatible real structures. The key input is the generic
smoothness of these K3 fibrations; see
Proposition~\ref{prop_lefschetz}. For each example, we compute the
Betti numbers of the $G_2$-manifold using the method of
Joyce-Karigiannis~\cite{JoyceKarigiannis}. We refer the reader to
Section~\ref{sec:preliminaries} for the method and to
Section~\ref{sec:examples} for details about these examples. It would
be interesting to find further examples and study further topological
invariants of the associated $G_2$-manifolds.

The construction in this paper may be viewed as a source of examples
for several related programs. Our examples of Calabi-Yau $3$-folds
with Lefschetz fibrations and compatible real structures provide
inputs for the Joyce-Karigiannis construction. The resulting
$G_2$-manifolds may be regarded as test cases for Li's iterated
collapsing picture of compact $G_2$-manifolds \cite{LiG2}, which
builds on the work of Apostolov-Salamon on $S^1$-invariant $G_2$
structures \cite{ApostolovSalamon}. On the other hand, it is also
natural to ask whether these examples can shed light on possible
relations between the Joyce-Karigiannis construction and Kovalev's
twisted connected sum construction. More broadly, these new
$G_2$-manifolds may serve as testing grounds for the further study of
gauge theory and conjectural enumerative theories of calibrated
submanifolds in manifolds with special holonomy \cite{DonaldsonThomas,
  DonaldsonSegal, DonaldsonScaduto}.

This paper is organised as follows. In Section~\ref{sec:preliminaries}
we review results of real loci of Calabi-Yau manifolds and the
Joyce-Karigiannis construction. In Section~\ref{sec:analysis} we prove
Theorem~\ref{thm:main1}. In Section~\ref{sec:k3}, we prove the generic
smoothness result for fibrations, Proposition~\ref{prop_lefschetz},
which will be used in the construction of K3 fibrations.  Finally, in
Section~\ref{sec:examples} we provide the examples mentioned in
Theorem~\ref{thm:main2} and Corollary~\ref{cor:main2}, and verify
their topological properties. \newline

\noindent{\bf Acknowledgements.}
We are grateful to Simon Donaldson for drawing our attention to
collapsing Ricci-flat K\"ahler metrics on Calabi-Yau 3-folds fibred
by K3 surfaces, which provided much of the motivation for the results
of this paper. Part of this work was done while SC was visiting Julie
Tzu-Yueh Wang at Academia Sinica, he is grateful for her warm
hospitality. CS was partially supported by EPSRC.

\section{Preliminaries}
\label{sec:preliminaries}

\subsection{Real loci of Calabi-Yau manifolds}

\begin{definition}
  $(X,\omega,\Omega)$ is a Calabi-Yau $n$-fold if $(X,\omega)$ is a
  simply connected, compact K\"ahler manifold of complex
  dimension $n$ and $\Omega$ is a nowhere-vanishing holomorphic
  section of the canonical bundle $K_X$, called a holomorphic volume
  form on $X$, such that $\omega$ and $\Omega$ solve the complex
  Monge-Amp\`ere equation
  \[\label{eq:normalization}
    \frac{\omega^n}{n!}  =
    (-1)^{n(n-1)/2}\left(\frac{\sqrt{-1}}{2}\right)^n\Omega\wedge \bar{\Omega}.
  \]
\end{definition}

By Yau's theorem~\cite{Yau}, on a compact K\"ahler manifold $X$ with a
holomorphic volume form $\Omega$, every K\"ahler class admits a unique
K\"ahler metric $\omega$ solving \eqref{eq:normalization}. Note that
from \eqref{eq:normalization}, $\omega$ is Ricci flat.

Let $(X,\omega,\Omega)$ be a Calabi-Yau $n$-fold. Let us assume that
$X$ is equipped with an \emph{anti-holomorphic involution}
$\sigma: X \to X$, which by definition is a diffeomorphism from $X$ to
$X$ such that $\sigma^2 = \operatorname{Id}$ and
$J\circ \sigma_* = - \sigma_* \circ J$, where $J$ is the complex
structure of $X$. By multiplying $\Omega$ with a unit complex number,
we may assume that $\sigma^*\Omega = -\bar\Omega$. In addition, we
assume that the K\"ahler class of the Ricci-flat K\"ahler metric
$\omega$ is anti-invariant under $\sigma$, that is,
$\sigma^*[\sigma] = -[\sigma]$. Since $[\omega] = [-\sigma^*\omega]$,
by uniqueness we have $\sigma^*\omega = -\omega$. In other words,
$\sigma$ is an \emph{anti-symplectic involution}.

We are interested in the fixed locus $\operatorname{Fix}(\sigma)$ of
an anti-holomorphic, anti-symplectic involution $\sigma: X \to X$. If
$\operatorname{Fix}(\sigma)$ is nonempty, then it is a smooth real
$n$-dimensional manifold. To see this, note that near each fixed
point, one can choose a suitable local holomorphic chart in which
$\sigma$ is given by complex conjugation. Thus, locally
$\operatorname{Fix}(\sigma)$ can be identified with
$\mathbb{R}^n \subset \mathbb{C}^n$, and the transition functions on
$X$ restrict to smooth transition functions on
$\operatorname{Fix}(\sigma)$. Moreover, $\operatorname{Fix}(\sigma)$
is a \emph{special Lagrangian submanifold}:

\begin{definition}
  \label{def:SL}
  An oriented Lagrangian submanifold $L$ in a Calabi-Yau $n$-fold
  $(X,\omega,\Omega)$ is \emph{special Lagrangian} if
  $\Omega|_L=e^{i\theta} \operatorname{vol}_L$ for some constant
  $\theta \in \mathbb{R}$. Equivalently, this means that
  $\operatorname{Im}(e^{-i\theta}\Omega)|_L = 0$ and $L$ is calibrated
  by the calibration form $\operatorname{Re}(e^{-i\theta}\Omega)$ in
  the sense of Harvey-Lawson~\cite{HarveyLawson}.
\end{definition}

\begin{remark}
  In general, for an oriented Lagrangian submanifold $L$, we always
  have $\Omega|_L=e^{i\theta_L} \operatorname{vol}_L$, where
  $e^{i\theta_L}$ is an $S^1$-valued function on $L$. This can be seen
  from \eqref{eq:normalization}. The mean curvature vector of $L$ is
  given by $\mathbf{H} = J\nabla \theta_L$.
\end{remark}

\begin{lemma}
  Let $(X,\omega,\Omega)$ be a Calabi-Yau $n$-fold with an
  anti-holomorphic, anti-symplectic involution $\sigma$. If
  $\operatorname{Fix}(\sigma)$ is nonempty, then it is a special
  Lagrangian submanifold.
\end{lemma}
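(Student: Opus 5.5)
The plan is to work pointwise at a fixed point $p\in L:=\operatorname{Fix}(\sigma)$ with the linear map $d\sigma_p$ on $T_pX$. Since $\sigma^2=\operatorname{Id}$, $d\sigma_p$ is a linear involution, so $T_pX = V_+\oplus V_-$ splits into its $\pm1$ eigenspaces. The relation $J\circ\sigma_*=-\sigma_*\circ J$ shows that $J$ swaps $V_+$ and $V_-$. Hence $\dim V_+=\dim V_-=n$ and $V_-=JV_+$. By the local chart description already given in the text (near a fixed point $\sigma$ is complex conjugation), $L$ is a smooth $n$-manifold with $T_pL=V_+$.

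Next I would show that $L$ is Lagrangian. For $u,v\in T_pL$ we have $d\sigma_p u=u$ and $d\sigma_p v=v$, so
\[
\omega(u,v)=(\sigma^*\omega)_p(u,v)=-\omega(u,v),
\]
using $\sigma^*\omega=-\omega$, which was derived above from uniqueness of the Ricci-flat metric. Thus $\omega|_L=0$, and since $\dim L=n$, $L$ is Lagrangian.

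For the special Lagrangian condition, I would use the normalization fixed earlier, $\sigma^*\Omega=-\bar\Omega$ (in the introduction the convention $\sigma^*\Omega=\bar\Omega$ is used instead; either works). Restricting to $L$, where $\sigma$ is the identity, gives $\Omega|_L=(\sigma^*\Omega)|_L=-\bar\Omega|_L$. Hence $\operatorname{Re}\Omega|_L=0$, so $\operatorname{Im}(e^{-i\theta}\Omega)|_L=0$ with $\theta=\pi/2$. With the other convention one gets $\operatorname{Im}\Omega|_L=0$ and $\theta=0$.

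It remains to check that this restriction is a genuine volume form and to fix an orientation. By the remark after Definition~\ref{def:SL}, for any oriented Lagrangian we have $\Omega|_L=e^{i\theta_L}\operatorname{vol}_L$ with $|e^{i\theta_L}|=1$; this follows from \eqref{eq:normalization}. So $\Omega|_L$ is nowhere zero. Locally, or on each connected component with its induced orientation, $\Omega|_L=\pm i\operatorname{vol}_L$, i.e. $e^{i\theta_L}\in\{\pm i\}$. This is a continuous function with values in a discrete set, so it is constant on each component. In particular $i^{-1}\Omega|_L$ is a real nowhere-vanishing $n$-form, which orients $L$; with this orientation $\Omega|_L=i\operatorname{vol}_L$ with constant phase, as Definition~\ref{def:SL} requires.

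The main obstacle I expect is this orientation and phase bookkeeping. The constant phase has to be read off carefully from the sign convention $\sigma^*\Omega=\pm\bar\Omega$. One must also use the normalization \eqref{eq:normalization} to conclude that $|\Omega|_L|$ equals the Riemannian volume form, not merely that it is nonzero. Both steps are routine linear algebra at a point.
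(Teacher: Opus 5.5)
Your proposal is correct and follows the same route as the paper. Because $\sigma$ restricts to the identity on $L$, restricting $\sigma^*\omega=-\omega$ and $\sigma^*\Omega=\pm\bar\Omega$ to $L$ forces $\omega|_L=0$, together with $\operatorname{Im}\Omega|_L=0$ (for the $+$ sign) or $\operatorname{Re}\Omega|_L=0$ (for the $-$ sign). Your extra bookkeeping fills in details the paper leaves implicit: the eigenspace count giving $\dim L=n$, the orientation defined by $i^{-1}\Omega|_L$, and your correct observation that the preliminaries' convention $\sigma^*\Omega=-\bar\Omega$ yields phase $\pi/2$, whereas the paper's one-line proof tacitly uses the introduction's convention $\sigma^*\Omega=\bar\Omega$ and phase $0$.
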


\begin{proof}
  Let $L = \operatorname{Fix}(\sigma)$. Since
  $\omega|_L = -\sigma^*\omega|_L = -\omega|_L$, it follows that
  $\omega|_L = 0$ and $L$ is Lagrangian. Since
  $\operatorname{Im}\Omega|_L = -\sigma^*\operatorname{Im}\Omega|_L =
  -\operatorname{Im}\Omega|_L$, it follows that
  $\operatorname{Im}\Omega|_L = 0$, so $L$ is special Lagrangian.
\end{proof}

It follows from the lemma that special Lagrangian submanifolds can be
constructed as real loci of real Calabi-Yau manifolds. In order to
study the geometry of the real loci more explicitly, one needs to
understand the metric behaviour of the Calabi-Yau manifold and its real
locus. To this end, we will focus on the case when the Calabi-Yau
$3$-fold admits a Lefschetz K3 fibration.

\begin{definition}
\label{def:lefschetz-fibration}
Let $X$ be a complex $n$-dimensional manifold and let $Y$ be a complex
$1$-dimensional manifold. A holomorphic surjection $\pi: X\to Y$ with
connected fibres is called a Lefschetz fibration if, for each critical
point $p \in X$, there exist local holomorphic coordinates
$(z_1,\ldots,z_n)$ centred at $p$ and a local holomorphic coordinate
$y$ centred at $\pi(p)$ such that
$y\circ \pi = z_1^2 + \cdots + z_n^2$. For each $y \in Y$, we write
$X_y = \pi^{-1}(y)$ for the corresponding fibre. We write $S$ for the
set of critical values of $\pi$.
\end{definition}

Now we assume $X$ is a Calabi-Yau $3$-fold that admits a Lefschetz
fibration $\pi: X \to Y$. We claim that
  $\pi_*: \pi_1(X) \to \pi_1(Y)$ is surjective, so $Y$ must be
  $\mathbb{CP}^1$. To see this, take a loop $\gamma: [0,1] \to
  Y$. Since $S$ is finite, by a small perturbation we may assume that
  $\gamma$ lies in $Y^\circ = Y \setminus S$. Let
  $X^\circ = \pi^{-1}(Y^\circ)$. Let $\tilde\gamma$ be a lift of
  $\gamma$ to $X^\circ$. Note that the endpoints of $\tilde\gamma$ lie
  in the same fibre $X_{\gamma(0)}$. Since $X_{\gamma(0)}$ is
  connected, the endpoints of $\tilde\gamma$ can be joined by a path
  contained in $X_{\gamma(0)}$. As a result, we obtain a loop whose
  image under $\pi_*$ is homotopic to $\gamma$. This shows that
  $\pi_*: \pi_1(X) \to \pi_1(Y)$ is surjective.

By adjunction, $K_{X_y}$ is trivial for $y \in Y$. Thus $X_y$ is
either a K3 surface or an abelian surface. The latter case is excluded
by the Lefschetz condition: otherwise, each regular fibre near a
singular point must contain a vanishing cycle diffeomorphic to
$S^2$. This $S^2$ has self-intersection $-2$. On the other hand, since
an abelian surface is diffeomorphic to $T^4$, we have
$\pi_2(T^4) = 0$, and hence this $S^2$ has self-intersection $0$, a
contradiction.

\subsection{$G_2$-manifolds and the Joyce-Karigiannis construction}
A $G_2$-structure on a $7$-manifold $M$ is given by a positive
$3$-form $\varphi\in\Omega^3(M)$, which by definition means that, at
each point $p \in M$, $\varphi$ is identified with the standard
$G_2$-invariant $3$-form
\[
  \varphi_0
  =
  e^{123}+e^{145}+e^{167}
  +e^{246}-e^{257}-e^{347}-e^{356}
\]
on $\mathbb R^7$ by choosing a basis $e^1,\ldots,e^7$ of
$T^*_pM$. $\varphi$ determines a Riemannian metric $g_\varphi$, an
orientation, and hence a Hodge star $\star_\varphi$. The
$G_2$-structure $\varphi$ is called torsion-free if
\[
  d\varphi=0,\qquad d\star_\varphi\varphi=0.
\]
Equivalently, $\varphi$ is parallel with respect to the Levi-Civita
connection of $g_\varphi$, and in this case
$\operatorname{Hol}(g_\varphi)\subseteq G_2$. We call $(M,\varphi)$ a
$G_2$-manifold. If the holonomy is equal to $G_2$, we say that
$(M,\varphi)$ has full $G_2$-holonomy. A comprehensive introduction to
the topic is \cite[Section 10]{Joycebook}.

A central challenge in $G_2$-geometry is to construct compact
$7$-manifolds with full $G_2$ holonomy. In the following, we briefly recall the construction from \cite{JoyceKarigiannis}.
Let $(M, \varphi)$ be a compact, torsion-free $G_2$-manifold and let $g$ denote the metric induced by $\varphi$.
Let $\iota: M \rightarrow M$ be a non-trivial involution so that $\iota^* \varphi=\varphi$.
Then $L'=\fix (\iota)$ is a (possibly empty) compact associative $3$-fold by \cite[Prop. 10.8.1]{Joycebook}.
The main result of \cite{JoyceKarigiannis} is then:

\begin{thm}[Theorem 6.4 in \cite{JoyceKarigiannis}]
\label{thm:joyce-karigiannis}
    Suppose $L'$ is nonempty, and that there exists a closed, coclosed, nowhere-vanishing $1$-form $\lambda \in \Omega^1(L')$, i.e. $d \lambda=d^* \lambda=0$, where $d^*$ is defined using $g|_{L'}$.

    Then there exists a compact $7$-manifold $N$ defined as a resolution of singularities $\pi:N \rightarrow M/\langle \iota \rangle$ of the $7$-orbifold $M/\langle \iota \rangle$ along its singular locus $L'$ by gluing in a bundle $P \rightarrow L'$ along $L'$ with fibre the Eguchi-Hanson space.
    The fundamental group of $N$ satisfies $\pi_1(N)=\pi_1(M / \langle \iota \rangle)$ and the Betti numbers are
    \begin{align}
    \label{eqn:jk-betti-formula}
        b^k(N)
        =
        b^k(M/\langle \iota \rangle)+b^{k-2}(L).
    \end{align}
    There exists a smooth family $\tilde \varphi^N_t$ of torsion-free $G_2$-structures on $N$ for $t \in (0,\epsilon]$, with $\epsilon > 0$ small, such that $\tilde \varphi^N_t \rightarrow \pi^* \varphi$ in $C^0$ away from $Q$ as $t \rightarrow 0$, and for each $x \in L'$ the fibre $\pi^{-1}(x) \cong S^2$ with metric induced by $\tilde \varphi^N_t$ approximates a small round $2$-sphere with area $\pi t^2 |\lambda_x|$ for small $t$.
    The metric induced by $\tilde \varphi^N_t$ has holonomy $G_2$ if and only if $M/\langle \iota \rangle$ has finite fundamental group.
\end{thm}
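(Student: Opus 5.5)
The statement above is Theorem 6.4 of \cite{JoyceKarigiannis}, quoted as a black box; the paper will cite it rather than prove it. What I can give is a sketch of how one would prove it, following Joyce and Karigiannis. The argument has three stages: a topological resolution, the construction of an approximate torsion-free $G_2$-structure, and a perturbation to an exact one.

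\textbf{Topological resolution.} Near $L'$, the quotient $M/\langle\iota\rangle$ is modelled on $(\nu_{L'})/\{\pm1\}$. Here the normal bundle $\nu_{L'}$ is a rank-$4$ bundle carrying a quaternionic structure induced by $\varphi$ and the associative calibration. The nowhere-vanishing $1$-form $\lambda$, viewed as a unit vector field on $L'$ via the metric, acts through the cross product. It determines a complex structure $J_\lambda$ on each fibre of $\nu_{L'}$, and hence a choice of blow-up direction. Gluing in the bundle $P\to L'$ of Eguchi--Hanson spaces, resolved with respect to $J_\lambda$ and scaled by $t^2|\lambda|$, gives $N$. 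A Mayer--Vietoris argument then yields the formula \eqref{eqn:jk-betti-formula}: the exceptional $S^2$-bundle over $L'$ contributes $b^{k-2}(L')$. The claim $\pi_1(N)=\pi_1(M/\langle\iota\rangle)$ follows by van Kampen, since $P$ is simply connected fibrewise.

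\textbf{Approximate $G_2$-structure.} Next one builds a closed $G_2$-structure $\varphi_t$ on $N$ by interpolating between $\pi^*\varphi$ away from $L'$ and the Eguchi--Hanson model near $L'$. The key point is that the torsion is small only if the first-order correction is arranged correctly. The condition $d\lambda=0$ makes the fibrewise hyperkähler form vary closedly, and the condition $d^*\lambda=0$ kills the leading-order term of $d\star\varphi_t$. Further correction terms are obtained by solving elliptic equations on $L'$ and on $\nu_{L'}$.

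\textbf{Perturbation to torsion-free.} Finally one applies Joyce's perturbation theorem for closed $G_2$-structures with small torsion. This requires estimates on the torsion in suitable weighted norms, together with uniform bounds on the injectivity radius, the curvature, and the inverse of the linearised operator. The linear analysis on the adiabatic $S^2$-bundle is the main obstacle. These bounds produce the family $\tilde\varphi^N_t$, and its $C^0$ convergence to $\pi^*\varphi$ away from $L'$ follows from the construction.

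\textbf{Holonomy.} The holonomy is full $G_2$ exactly when $\pi_1$ is finite, by the standard criterion for compact $G_2$-manifolds.
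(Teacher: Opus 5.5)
Your proposal matches the paper's treatment: the paper gives no proof of this statement and simply quotes it from Joyce--Karigiannis. Your outline also follows the broad shape of their argument: the resolution data come from $\lambda$ via the cross product on $\nu_{L'}$, the topology follows from Mayer--Vietoris and van Kampen, the condition $d\lambda=d^*\lambda=0$ removes the leading-order obstruction, and the conclusion uses Joyce's existence theorem together with the finite-$\pi_1$ holonomy criterion and $\pi_1(N)\cong\pi_1(M/\langle\iota\rangle)$.

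One correction to your sketch. Joyce's existence theorem needs only $C^0$, $L^2$ and $L^{14}$ bounds on the torsion plus injectivity-radius and curvature bounds; it has no separate hypothesis on an inverse of the linearised operator. The real difficulty in Joyce--Karigiannis comes earlier, in producing a closed approximate $G_2$-structure with small enough torsion. That step requires solving a family of elliptic equations on the noncompact Eguchi--Hanson fibres parametrised by $L'$, not on $\nu_{L'}$.
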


There are two situations where this crucial ingredient is known to exist:
first, the original generalised Kummer construction from \cite{JoyceI, JoyceII} is recovered as a special case of the gluing theorem from \cite{JoyceKarigiannis}, where one starts with $M=T^7$ and the flat $G_2$-structure.
Second, one may begin with $M=T^3 \times \text{K3}$.
Two examples of this type are explained in \cite[Section 7.2]{JoyceKarigiannis}, and a full classification was obtained in \cite{reidegeld}.
In these cases, the nowhere vanishing harmonic $1$-form comes from the flat $T^7$ and $T^3$, respectively.
Even though the initial manifold has reduced holonomy, many of the resulting examples have full holonomy $G_2$.

In this paper, we apply the theorem to products of Calabi-Yau manifolds with a circle, as done in the conjectural example \cite[Section 7.5]{JoyceKarigiannis}.
Let $(X, \omega, \Omega)$ be a Calabi-Yau manifold.
Let $\sigma : X \rightarrow X$ be an anti-holomorphic involution so that $\sigma^* \omega=-\omega$ and $\sigma^* \Omega=\bar \Omega$.
Extend $\sigma$ to $M=S^1 \times X$ as $\iota: M \rightarrow M$ as $\iota(e^{i\theta},x)=(e^{-i\theta},\sigma(x))$.
Then the $G_2$-structure $\varphi=d\theta \wedge \omega+\text{Re} \, \Omega \in \Omega^3(M)$ is preserved by $\iota$, so Theorem \ref{thm:joyce-karigiannis} can be applied to $M$.
We obtain:

\begin{cor}
    \label{cor:joyce-karigiannis-for-cy}
    Let $(X, \omega, \Omega)$ be a Calabi-Yau manifold and $\sigma, \iota$ as above and $L=\fix (\sigma)$.
    Then there exists a resolution of singularities $N \rightarrow M/\langle \iota \rangle$ carrying a smooth family $\tilde \varphi^N_t$ of torsion-free $G_2$-structures.
    The induced metrics have holonomy equal to $G_2$ and the Betti numbers of $N$ satisfy
    \begin{align}
    \begin{split}
    b^0(N)=1, \quad
    b^1(N)=0, \quad
    b^2(N)=\dim (H^{1,1}(X)^\sigma)+2b^0(L),
    \\
    b^3(N)=1+h^{2,1}(X)+\dim (H^{1,1}(X)^{-\sigma})+2b^1(L),
    \end{split}
    \end{align}
    where $H^{1,1}(X)^{\pm\sigma}$ denote the $\pm 1$-eigenspaces of
    $\sigma^*$, respectively.
\end{cor}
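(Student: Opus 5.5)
The plan is to apply Theorem~\ref{thm:joyce-karigiannis} to $M=S^1\times X$ with the involution $\iota$. Then read off the topology from the $\iota$-invariant cohomology of $M$. As in \cite[Section 7.5]{JoyceKarigiannis}, I take the corollary to include the hypothesis needed by Theorem~\ref{thm:joyce-karigiannis}: that $L$ is nonempty and every connected component of $L$ carries a nowhere-vanishing harmonic $1$-form for $g|_L$. In the paper this is supplied by Theorem~\ref{thm:main1}.

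\textbf{Fixed locus and hypotheses.} The fixed set of $\iota$ is
\[
L'=\{1\}\times L\ \cup\ \{-1\}\times L .
\]
On each slice $\{\pm1\}\times X$ the metric $g=d\theta^2+g_X$ restricts to $g_X$. So $g|_{L'}$ is two copies of $g_X|_L$, and a nowhere-vanishing harmonic $1$-form on $L$ gives one on $L'$. Theorem~\ref{thm:joyce-karigiannis} then yields $N$ and the family $\tilde\varphi^N_t$. It also gives $b^k(N)=b^k(M/\langle\iota\rangle)+2b^{k-2}(L)$, because $b^j(L')=2b^j(L)$.

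\textbf{Fundamental group.} The group generated by $\pi_1(M)\cong\mathbb Z$ (since $X$ is simply connected) and the lift of $\iota$ is the infinite dihedral group $\mathbb Z\rtimes\mathbb Z_2$ acting on $\mathbb R\times X$. It is generated by the two reflections $\theta\mapsto-\theta$ and $\theta\mapsto 2\pi-\theta$ (each composed with $\sigma$). Both have fixed points, because $L\neq\emptyset$. By Armstrong's theorem, $\pi_1(M/\langle\iota\rangle)$ is this group modulo the normal subgroup generated by elements with fixed points. That quotient is trivial. Hence $\pi_1(N)=1$, so $b^1(N)=0$, and the holonomy is all of $G_2$ by the last sentence of Theorem~\ref{thm:joyce-karigiannis}.

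\textbf{Betti numbers of the quotient.} Rationally, $H^k(M/\langle\iota\rangle)\cong H^k(M)^{\iota}$. By Künneth,
\[
H^k(M)=H^k(X)\oplus [d\theta]\wedge H^{k-1}(X),
\]
and $\iota^*[d\theta]=-[d\theta]$. Therefore
\[
H^k(M)^\iota\cong H^k(X)^{\sigma}\oplus H^{k-1}(X)^{-\sigma}.
\]
The degrees work out as follows, using $h^{1,0}=h^{2,0}=0$ and $H^2(X,\mathbb R)=H^{1,1}(X)$:
\begin{itemize}
\item[(a)] $k=0$: $b^0=1$.
\item[(b)] $k=1$: $b^1=\dim H^1(X)^\sigma+\dim H^0(X)^{-\sigma}=0$, since $\sigma^*$ is trivial on $H^0$.
\item[(c)] $k=2$: $b^2=\dim H^{1,1}(X)^\sigma$, since $H^1(X)=0$.
\item[(d)] $k=3$: $b^3=\dim H^3(X)^\sigma+\dim H^{1,1}(X)^{-\sigma}$.
\end{itemize}

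\textbf{The step needing care: $\dim H^3(X,\mathbb R)^\sigma=1+h^{2,1}$.} Because $\sigma$ is anti-holomorphic, $\sigma^*$ exchanges
\[
V=H^{3,0}\oplus H^{2,1}\quad\text{and}\quad \bar V=H^{0,3}\oplus H^{1,2}.
\]
For any involution that swaps two complementary subspaces of equal dimension, the $+1$ and $-1$ eigenspaces each have half the total dimension. Here the total is $b^3=2+2h^{2,1}$, so the $+1$ eigenspace has dimension $1+h^{2,1}$.

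Adding $2b^{k-2}(L)$ to items (c) and (d) gives the stated formulas for $b^2(N)$ and $b^3(N)$.
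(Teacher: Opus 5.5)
Your proposal is correct and follows the same route as the paper. You apply Theorem~\ref{thm:joyce-karigiannis} to $M=S^1\times X$, identify $H^k(M/\langle\iota\rangle)$ with $H^k(X)^{\sigma}\oplus H^{k-1}(X)^{-\sigma}$ via K\"unneth, and get $\dim H^3(X)^\sigma=1+h^{2,1}(X)$ from $\sigma^*$ swapping $H^{p,q}$ and $H^{q,p}$. You also correctly supply three points the paper's proof leaves implicit: the needed hypotheses ($L\neq\emptyset$ with a nowhere-vanishing harmonic $1$-form), the factor $2$ coming from $L'=\{\pm1\}\times L$, and the triviality of $\pi_1(M/\langle\iota\rangle)$ via Armstrong's theorem, which is what actually justifies the full holonomy $G_2$ claim.
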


\begin{proof}
    By \eqref{eqn:jk-betti-formula}, we have
    $b^0(N)=b^0(M/\langle \iota \rangle)=1$, because $X$ was assumed to be connected.
    By the same equation,
    $b^1(N)=b^1(M/\langle \iota \rangle)=\dim H^1(M)^\sigma=b^1(X)^\sigma+b^1(S^1)^{-\sigma}=0$, by the Künneth formula, where the last equality holds because $X$ is simply connected by our definition of Calabi-Yau.
    The equation for $b^2(N)$ follows because $h^{2,0}(X)=0$ for a Calabi-Yau $3$-fold.
    By the K\"unneth formula, $H^3(S^1 \times X) \cong H^3(X) + H^2(X) \otimes H^1(S^1)$.
    Taking the $\iota$-invariant part gives
    \[
    H^3((S^1 \times X)/\langle \iota \rangle)
    =
    H^3(S^1 \times X)^\iota
    \cong
    H^3(X)^\sigma
    +
    H^2(X)^{-\sigma}.
    \]
    The anti-holomorphic involution $\sigma$ acts on $H^3(X)$ by swapping $H^{p,q}(X)$ and $H^{q,p}(X)$, hence it preserves $\Omega + \bar \Omega \in H^{3,0}(X) \oplus H^{0,3}(X)$ and a copy of $H^{2,1}(X)$ in $H^{2,1}(X) \oplus H^{1,2}(X)$, so $\dim (H^3(X)^\sigma)=1+h^{2,1}(X)$.
    The equation for $b^3(N)$ follows then from \eqref{eqn:jk-betti-formula}.
\end{proof}

\section{Nowhere vanishing harmonic $1$-forms}
\label{sec:analysis}

In this section, we prove Theorem~\ref{thm:main1}. We reformulate
Theorem~\ref{thm:main1} as follows.

\subsection{Setup}
Let $X$ be a Calabi-Yau $3$-fold with a Lefschetz K3 fibration
\[
  \pi: X \to Y=\mathbb{P}^1.
\]
Let $S \subset Y$ denote the discriminant locus of $\pi$. 

\begin{assumption}\label{assumption:1}
  We further assume the following:
  \begin{itemize}
  \item There exist anti-holomorphic involutions $\sigma: X \to X$ and
    $\sigma: Y \to Y$.
  \item The fibration is equivariant with respect to $\sigma$, i.e.,
    $\pi\circ\sigma = \sigma\circ\pi$.
  \item The fixed locus $\operatorname{Fix}(\sigma) \subset X$ is
    nonempty.
  \item The fixed locus $\gamma=\operatorname{Fix}(\sigma) \subset Y$
    contains no points in the discriminant locus $S$.
  \end{itemize}  
\end{assumption}

Let us fix a holomorphic volume form $\Omega$ of $X$ and a reference
K\"ahler metric $\omega_X$ of $X$ such that
$\sigma^*[\omega_X] = -[\omega_X]$. Let $\omega_Y$ denote the
Fubini-Study metric on $Y$. Suppose in addition we normalise so that
\[
  \frac{\sqrt{-1}}{2^3}\int_X \Omega \wedge \bar \Omega = 1, \qquad
  \int_Y \omega_Y = 1, \qquad  \frac{1}{2}\int_{X_y}\omega_X^2 = 1
\]
for all $y \in Y$. For $t>0$, let
$\tilde\omega_t \in [\omega_X+\frac{1}{t}\pi^*\omega_Y]$ be the unique
Ricci-flat K\"ahler metric such that
\[
  \frac{1}{3!}\tilde\omega_t^3 = \frac{\sqrt{-1}}{2^3}a_t\, \Omega\wedge\bar\Omega,
\]
where $a_t = O(t^{-1})$ is the positive constant determined by
integrating both sides of the equation. Note that by the uniqueness of
the Ricci-flat K\"ahler metrics, $\sigma$ is anti-symplectic:
\[
  \sigma^*\tilde\omega_t = -\tilde\omega_t.
\]

Now, let $L$ be a connected component of the fixed locus
$\operatorname{Fix}(\sigma) \subset X$. Then the following hold:
\begin{itemize}
\item $L$ is special Lagrangian with respect to $(X,\tilde\omega_t,\sqrt{a_t}\,\Omega)$.
\item For $y \in \gamma$, $L_y = \pi|_L^{-1}(y)$ is the real locus of
  $X_y$. Thus in particular $L_y$ is special Lagrangian with respect
  to the Calabi-Yau structure of the K3 surface $X_y$.
\item Each connected component of $L$ is a submersion onto $S^1$ whose
  fibres are compact Riemann surfaces.
\end{itemize}

Our goal is to prove the following, which is a reformulation of
Theorem~\ref{thm:main1}:

\begin{thm}\label{thm:oneforms}
  Let $X$ be a Calabi-Yau $3$-fold equipped with a Lefschetz K3
  fibration $\pi: X \to Y$. Under Assumptions~\ref{assumption:1}, for
  sufficiently small $t>0$, there exists a nowhere-vanishing harmonic
  $1$-form on the special Lagrangian submanifold $L$.
\end{thm}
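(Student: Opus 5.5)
The plan is to follow the strategy outlined in the introduction: pull back a carefully weighted $1$-form from the circle $\gamma=\operatorname{Fix}(\sigma)\subset Y$, then correct it by an exact form $du$ obtained from a uniformly controlled solution of a Poisson equation on $(L,\tilde g_t|_L)$.

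\textbf{Step 1: geometry of $L$.} Since $\gamma\cap S=\emptyset$, the map $\pi|_L: L\to\gamma\cong S^1$ is a proper submersion, so $L$ fibres over $S^1$ with fibres $L_y$ the real loci of the smooth K3 surfaces $X_y$. Using the regularity theory for collapsing Ricci-flat metrics on K3-fibred $3$-folds (\cite{Li, HeinTosatti, ChiuLin}), away from $S$ the metric $\tilde\omega_t$ is $C^k$-close, after the rescaling, to the semi-flat metric $\tfrac1t\pi^*\tilde\omega_Y+\omega_{SF,y}$. Here $\omega_{SF,y}$ is the fibrewise Ricci-flat metric in $[\omega_X|_{X_y}]$. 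Since $\gamma$ is a compact subset of $Y\setminus S$, these estimates hold uniformly along $\gamma$. Restricting to $L$, I expect
\[
\tilde g_t|_L = t^{-1}h_\gamma(s)\,ds^2 + g_{L_s} + \text{(errors)},
\]
where $s$ is a parameter on $\gamma$ and $g_{L_s}$ is the induced metric on the special Lagrangian fibre. The error terms should decay in $t$ in a weighted $C^{k,\alpha}$ sense adapted to the scales (length of order $t^{-1/2}$ in the base direction, order $1$ in the fibre direction).

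\textbf{Step 2: choice of $\alpha$.} Let $\ell$ denote arc length on $\gamma$ for $\tilde g_t$, and $A_t(\ell)$ the area of the fibre $L_\ell$. I would set $\alpha=f(\ell)\,d\ell$ with $f=c/A_t(\ell)$, or more precisely $f$ chosen so that the fibrewise average of $d^*\alpha$ vanishes exactly. The fibre-integrated divergence of $\alpha$ is $\partial_\ell(A_t f)$ up to errors coming from the non-orthogonality of $d\ell$ to the fibres. If I define $f$ by requiring $\int_{L_\ell}\ast\alpha$ to be constant, the flux condition makes $\int_{L_\ell} d^*\alpha\,\mathrm{dvol}=0$ for every $\ell$, by Stokes on $\pi^{-1}([\ell_1,\ell_2])$. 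This choice is clean: $\alpha$ is closed, nowhere vanishing, and $d^*\alpha$ has fibrewise mean zero. Moreover, $d^*\alpha$ is small, of size $o(1)$ relative to $|\alpha|$, because the metric is asymptotically a product.

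\textbf{Step 3: solving $\Delta u=d^*\alpha$ with uniform bounds.} This is the main obstacle. Since $d^*\alpha$ has zero fibre average, it lies in the part of the spectrum where the fibre Laplacian is invertible. On mean-zero functions, the first fibre eigenvalue $\lambda_1(L_y)$ is bounded below uniformly in $y\in\gamma$ and $t$. I would build a parametrix by inverting the fibre Laplacian $\Delta_{L_y}$ fibrewise on mean-zero functions, giving $u_0=\Delta_{\mathrm{fib}}^{-1}(d^*\alpha)$, which is smooth in $y$. The error $\Delta u_0-d^*\alpha$ involves base derivatives, which carry a factor $t$, plus the metric errors, so it is small. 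However, the error need not have mean zero. Its mean-zero part is iterated away; the fibre-average part is handled by solving an ODE on $S^1$ with the $t^{-1}$-stretched metric, or by absorbing it through an adjustment of $f$. The latter keeps $\alpha$ closed and makes the total mean vanish. Working in weighted Hölder spaces, a Neumann series then gives a solution $u$ with $\|du\|_{C^0}=o(1)\cdot\inf|\alpha|$. Alternatively, I would use the global $L^2$ spectral gap on mean-zero functions to get $\|du\|_{L^2}\lesssim\|d^*\alpha\|_{L^2}$, and upgrade to $C^0$ by local elliptic estimates on balls of unit size, where the geometry is uniformly bounded.

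\textbf{Step 4: conclusion.} The form $\alpha-du$ is closed, and it is coclosed since $d^*(\alpha-du)=d^*\alpha-\Delta u=0$, so it is harmonic. Because $|du|<|\alpha|$ pointwise for small $t$, it is nowhere vanishing. Applying this to each connected component of $L$ proves Theorem~\ref{thm:oneforms}.
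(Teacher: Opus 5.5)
Your Step 2 is exactly the paper's ansatz. A constant flux $\int_{L_\ell}{*\alpha}$ forces $\varphi(\ell)=c\,(\int_{L_\ell}\lambda^{-1/2})^{-1}$, and Stokes on slabs gives $\int_{L_\ell}\lambda^{1/2}\,d^*\alpha=0$. The weight $\lambda^{1/2}=|\nabla\ell|^{-1}$ comes from the coarea formula, so what vanishes is this weighted average, not the average against the area form of $h_\ell$.

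The gap is in Step 3. Your parametrix inverts only the fibre Laplacian and treats $-\partial_\ell^2$ as a perturbation ``carrying a factor $t$''. That holds only for data that vary slowly in the stretched arc length $\ell$, i.e.\ data that are smooth in the unscaled base variable with uniform bounds. Your actual input, Proposition~\ref{prop:cylindrical}, controls $\tilde\omega_t-\omega_{SRF}$ only as $O(t^{1/2})$ in $C^{k,\alpha}$ at unit scale. Since $d^*\alpha$ involves one $\ell$-derivative of the metric coefficients, it contains a piece of size $O(t^{1/2})$ whose $\ell$-derivatives are not known to be any smaller. For that piece, $\partial_\ell^2\Delta_{\mathrm{fib}}^{-1}d^*\alpha$ is as large as $d^*\alpha$ itself. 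Worse, $\Delta\circ\Delta_{\mathrm{fib}}^{-1}-I$ is not even bounded on a fixed H\"older space: each step costs two $\ell$-derivatives, while $\Delta_{\mathrm{fib}}^{-1}$ gains regularity only along the fibres. So the Neumann series does not close. The paper avoids this by inverting the full model operator $-\partial_s^2+\Delta_{L_{s_i}}$ on fibrewise mean-zero functions on the cylinders $\mathbb{R}\times L_{s_i}$ (Lemma~\ref{lemma:cylinder}). It uses the exponential decay to cut off at scale $|\log t|$, and it treats the averaged part separately via the base ODE.

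Your alternative does not work as worded either. The global spectral gap of $(L,\tilde g_t|_L)$ on mean-zero functions is of order $t$, because the base circle has length of order $t^{-1/2}$. It therefore only gives $\|du\|_{L^2}\lesssim t^{-1/2}\|d^*\alpha\|_{L^2}\sim t^{-1/4}$, and after the local upgrade $|du|\lesssim t^{-1/4}$. This is precisely the non-uniformity the paper points out before its parametrix construction. If you meant the fibrewise gap instead, note that $u$ itself is not fibrewise mean-zero.

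The route can be repaired, and it would then be genuinely simpler than the paper's parametrix. Because $\int_{L_\ell}\lambda^{1/2}d^*\alpha=0$ exactly, one has $\int_L|du|^2=\int_L d^*\alpha\,(u-\bar u)$, where $\bar u(\ell)$ is the $\lambda^{1/2}$-weighted fibre mean of $u$. The fibrewise Poincar\'e inequality holds uniformly in $t$, since $h_\ell$ is $O(t^{1/2})$-close to the $t$-independent family $\tilde h_\ell$. Combining it with $|d_{L_\ell}u|\le|du|$ gives $\|du\|_{L^2(L)}\le C\|d^*\alpha\|_{L^2(L)}\le Ct^{1/2}\operatorname{vol}(L)^{1/2}=O(t^{1/4})$. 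Interior estimates on unit balls then give $\sup_L|du|=O(t^{1/4})$, which is below $\inf|\alpha|>0$ for small $t$.

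The missing idea is this duality step, which uses the vanishing fibre average of the source rather than of $u$, together with the volume bookkeeping. As written, neither of your two versions of Step 3 is justified.
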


To this end, we first recall the result of \cite{Li}, together with
the improved estimate in \cite{ChiuLin}.

Fix a local holomorphic coordinate $y$, and write
$\Omega = dy\wedge\Omega_y$. Define the generalised K\"ahler-Einstein
metric
\[
  \tilde\omega_Y = \frac{\sqrt{-1}}{2}A_y\, dy \wedge d\bar y,
\]
where
\[
  A_y = \frac{1}{2^2}\int_{X_y} \Omega_y \wedge \bar \Omega_y.
\]
Then $\operatorname{Ric}(\tilde\omega_Y) = \omega_{WP}$, the
Weil-Peterson metric of the fibration $\pi: X \to Y$.

For $y \in Y$, let
$\omega_y = \omega_X|_{X_y} + \sqrt{-1}\partial\bar\partial \psi_y$ be
the unique Ricci-flat K\"ahler metric on $X_y$ satisfying
\[
  \frac{1}{2!}\omega_y^2 = \frac{1}{2^2}\,A_y^{-1}\, \Omega_y \wedge\bar \Omega_y,
\]
so $(X_y,\omega_y,A_y^{-1/2}\Omega_y)$ is a Calabi-Yau $2$-fold. We
can then consider the \emph{semi-Ricci flat metric}
\[
  \omega_{SRF} = \omega_X + \frac{1}{t}\pi^*\tilde\omega_Y + \sqrt{-1}\partial\bar\partial\psi.
\]
This is a genuine K\"ahler metric away from the singular fibres of
$\pi: X\to Y$. In such a region, we have

\begin{prop}\label{prop:cylindrical}
  Let $y_0 \in Y \setminus S$, and let $B_{y_0}$ be a neighbourhood of
  $y_0$ with uniformly bounded diameter with respect to
  $\frac{1}{t}\tilde\omega_Y$. Then for \(t>0\) sufficiently small,
  there exists a diffeomorphism
  \[
    \Psi_{y_0}:B_{y_0}\times X_{y_0}\to \pi^{-1}(B_{y_0})
  \]
  such that
  \[
    \|\tilde\omega_t - \omega_{SRF}\|_{C^{k,\alpha}(\pi^{-1}(B_{y_0}), \tilde\omega_t)} = O(t^{\frac{1}{2}}), \\
    \|\Psi_{y_0}^*\Omega- dy\wedge\Omega_{y_0}\|_{C^{k,\alpha}(B_{y_0}\times X_{y_0})} = O(t^{\frac{1}{2}}), \\
    \|\Psi_{y_0}^*\omega_{SRF} - (t^{-1}\tilde\omega_Y+\omega_{y_0})\|_{C^{k,\alpha}(B_{y_0} \times X_{y_0})}
    = O(t^{\frac{1}{2}}),
  \]
  where in a local coordinate on $B_{y_0}$ we write
  $\Omega = dy \wedge \Omega_y$, the metric on
  $B_{y_0} \times X_{y_0}$ is the product metric
  $\frac{1}{t}\tilde\omega_Y+\omega_{y_0}$, and all H\"older norms are
  taken with respect to the indicated background metrics.
\end{prop}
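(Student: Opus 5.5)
This proposition is essentially a restatement of the local regularity theory for collapsing Ricci-flat metrics on K3-fibred Calabi-Yau $3$-folds, so the plan is to assemble it from three ingredients: the $C^0$/gradient estimates of \cite{Li}, the higher-order estimates of \cite{HeinTosatti}, and the improved $O(t^{1/2})$ rate in \cite{ChiuLin}. The three estimates are of different nature, and I will prove them in the order: (ii) trivialisation of $\Omega$, (iii) semi-flat model, then (i) the comparison $\tilde\omega_t$ versus $\omega_{SRF}$.

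\textbf{Step 1 (trivialisation).} Since $y_0\notin S$, $\pi$ is a submersion over a fixed neighbourhood $U$ of $y_0$ (in the fixed metric $\omega_Y$), and $B_{y_0}$ has $\omega_Y$-diameter $O(t^{1/2})$ because its diameter is bounded for $t^{-1}\tilde\omega_Y$. I take $\Psi_{y_0}$ to be the flow of the $\omega_X$-horizontal lifts of the coordinate fields $\partial_{\mathrm{Re}\,y},\partial_{\mathrm{Im}\,y}$ (an Ehresmann connection), composed with rescaling $y\mapsto y_0+t^{1/2}\hat y$ on the base. Then $\Psi_{y_0}^*\Omega_y$ depends smoothly on $y$. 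In the rescaled coordinate, with product metric $t^{-1}\tilde\omega_Y+\omega_{y_0}$, the variation of $\Omega_y$ over $B_{y_0}$ is $O(t^{1/2})$ in all $C^{k,\alpha}$ norms. The $dy$-component contributes terms that are also suppressed by $t^{1/2}$ after measuring $dy$ in the scaled metric. This gives the second estimate.

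\textbf{Step 2 (semi-Ricci-flat model).} The fibrewise potentials $\psi_y$ depend smoothly on $y\in U$, by the implicit function theorem applied to the fibrewise Calabi–Yau equation, which is unobstructed on K3. Hence $\Psi_{y_0}^*\omega_{SRF}$ restricted to fibres is $\omega_{y_0}+O(|y-y_0|)=\omega_{y_0}+O(t^{1/2})$. The mixed and base components coming from $\omega_X+\sqrt{-1}\partial\bar\partial\psi$ are bounded in $\omega_Y$-norm. Measured against $t^{-1}\tilde\omega_Y$, their mixed parts are therefore $O(t^{1/2})$ and their base parts $O(t)$. Similarly $\tilde\omega_Y(y)$ differs from its linearisation by $O(t^{1/2})$ after rescaling. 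Together these give the third estimate.

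\textbf{Step 3 (main obstacle).} The first estimate is the genuinely analytic input. I would invoke \cite{ChiuLin}: writing $\tilde\omega_t=\omega_{SRF}+\sqrt{-1}\partial\bar\partial\phi_t$, Li's and Hein–Tosatti's work gives uniform $C^{k,\alpha}$ bounds in the rescaled product geometry with a decay rate. The sharp $O(t^{1/2})$ rate follows from \cite{ChiuLin}, which in this Lefschetz setting improves the rate by a fibrewise-average and base-ODE argument on the linearised Monge–Ampère operator. The normalisation $a_t=O(t^{-1})$ fits with this, since $a_t\Omega\wedge\bar\Omega$ matches $\omega_{SRF}^3/3!$ up to $O(t)$. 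If I had to argue directly, I would compare volume forms, use the uniform equivalence $C^{-1}\omega_{SRF}\le\tilde\omega_t\le C\omega_{SRF}$ from \cite{Li}, bootstrap by Evans–Krylov/Schauder on the product model $B_{y_0}\times X_{y_0}$, and obtain the rate by expanding $\phi_t$ in fibre average plus oscillation. This rate extraction is the step I expect to be hardest, and I would cite it rather than reprove it.
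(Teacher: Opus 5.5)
Your proposal takes essentially the same approach as the paper, which gives no argument of its own: it cites \cite{Li} for the trivialisation and semi-Ricci-flat estimates, and \cite[Proposition~3.9]{ChiuLin} for the $O(t^{1/2})$ comparison of $\tilde\omega_t$ with $\omega_{SRF}$, which is the same input you rely on in Step~3. Your Steps~1--2 correctly unpack the content of the estimates from \cite{Li}: $\Omega_y$ and $\psi_y$ depend smoothly on $y$ away from $S$, and measuring $dy$ and base derivatives in the rescaled metric gains a factor of $t^{1/2}$.
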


\begin{remark}
  The last two estimates were proved by Li \cite{Li}, while the first
  estimate is the improved estimate in \cite[Proposition~3.9]{ChiuLin}.
\end{remark}

Our strategy for proving Theorem~\ref{thm:oneforms} is to first write
down a nowhere-vanishing closed $1$-form $\alpha$ that only depends on
the base variable along $\gamma$. $\alpha$ is almost harmonic in the
sense that $d^*\alpha = O(t^{1/2})$ and, more importantly, the
fibrewise average of $d^*\alpha$ is even smaller at $O(t)$. This
improved fibrewise-average estimate allows us to apply the parametrix
method and correct $\alpha$ to a nowhere-vanishing harmonic $1$-form
for all sufficiently small $t>0$.

\subsection{The ansatz nowhere-vanishing closed $1$-form}

Let $s\in [0,\ell]$ denote the arc-length parameter on $\gamma$ with
respect to the metric induced by $\frac{1}{t}\tilde \omega_Y$. Then
$ds$ is a nowhere vanishing $1$-form on $\gamma$. By abuse of
notation, we also write $ds$ for its pullback on $L$.

For $s\in [0,\ell]$, let $L_s = \pi|_L^{-1}(\gamma(s))$ denote the
corresponding fibre. $L_s$ is equipped with the metric $h_s$ induced
by the restriction of the Calabi-Yau metric $\tilde\omega_t$ on
$X$. We will also consider the metric $\tilde h_s$ induced by the
restriction of the semi-Ricci flat metric $\omega_{SRF}$. Unless
otherwise stated, all integrals over $L_s$ are taken with respect to
the volume form of the metric $h_s$.

We shall construct a $1$-form
\[
  \alpha = \varphi(s)\,ds
\]
which is almost harmonic in the following precise sense:

\begin{prop}\label{prop:error}
  For $t>0$ sufficiently small, there exists a nowhere-vanishing $1$-form
  $\alpha=\varphi(s)\,ds$ such that
  \[
    \|d^*\alpha\|_{C^{k,\alpha}(L)} = O(t^{\frac{1}{2}}),
  \]
  and moreover
  \[
    \left\|s\mapsto\int_{L_s} d^*\alpha\right\|_{C^{k,\alpha}(L)}
    = O\big(t^{\frac{1}{2}}\|d^*\alpha\|_{C^{k,\alpha}(L)}\big)
    = O(t).
  \]
  Here we regard the function $s\mapsto \int_{L_s} d^*\alpha$ as a
  function on $L$.
\end{prop}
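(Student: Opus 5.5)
The plan is to take $\varphi$ so that the flux of $\alpha$ through every fibre $L_s$ is constant; this kills a \emph{weighted} fibre average of $d^*\alpha$ exactly, and the product structure from Proposition~\ref{prop:cylindrical} then makes the unweighted average smaller by a factor $t^{1/2}$. Let $g=\tilde g_t|_L$, let $\nabla s$ be the $g$-gradient of the pulled-back arc-length function, and let $dA_s$ be the area form of $h_s$ on $L_s$. Since $\pi|_L:L\to\gamma$ is a submersion, $ds\neq 0$ everywhere on $L$. I would set
\[
  \varphi(s) = \Big(\int_{L_s} |\nabla s|_g\, dA_s\Big)^{-1},
  \qquad \alpha=\varphi(s)\,ds .
\]
Then $\varphi$ is smooth, positive and periodic on $\gamma\cong S^1$, so $\alpha$ is closed and nowhere vanishing.

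\textbf{Step 1: the weighted identity.} Apply the divergence theorem on the slab $\{s_1\le s\le s_2\}\subset L$, whose boundary consists of $L_{s_1}$ and $L_{s_2}$ with unit normal $\nu=\nabla s/|\nabla s|$. Then use the coarea formula $dV_g = |\nabla s|^{-1}\,dA_s\,ds$. This gives
\[
  \int_{s_1}^{s_2}\!\int_{L_s} d^*\alpha\,|\nabla s|^{-1}\,dA_s\,ds
  = -\Big[\int_{L_s}\langle\alpha,\nu\rangle\, dA_s\Big]_{s_1}^{s_2}
  = -\Big[\varphi(s)\int_{L_s}|\nabla s|\,dA_s\Big]_{s_1}^{s_2}=0 .
\]
Because $s_1,s_2$ are arbitrary, $\int_{L_s} d^*\alpha\,|\nabla s|^{-1}\,dA_s=0$ for every $s$. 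Consequently
\[
  \int_{L_s} d^*\alpha = \int_{L_s} d^*\alpha\,\big(1-|\nabla s|^{-1}\big)\,dA_s .
\]

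\textbf{Step 2: the pointwise bounds.} Work in a cylindrical chart $\Psi_{y_0}$ from Proposition~\ref{prop:cylindrical}, which covers a piece of $\gamma$ of length $O(1)$ in the $t^{-1}\tilde\omega_Y$ metric.

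First consider $|\nabla s|$. In the model product metric $t^{-1}\tilde\omega_Y+\omega_{y_0}$ we have $|\nabla s|\equiv 1$. Hence $|\nabla s|-1=O(t^{1/2})$ in $C^{k,\alpha}$ of the actual metric, and likewise $1-|\nabla s|^{-1}=O(t^{1/2})$.

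Next consider $d^*\alpha$. In the model product metric, $d^*(\varphi(s)\,ds)=-\varphi'(s)$, so we need $\varphi'=O(t^{1/2})$. For this, write $\varphi(s)^{-1}=\operatorname{Area}(L_s)+O(t^{1/2})$. By the calibration property, $\operatorname{Area}(L_s)$ equals the period of the appropriately normalised holomorphic $2$-form over $L_s$, up to an $O(t^{1/2})$ error. Up to that error, this is a smooth function of the point $y\in\gamma$ that does not depend on $t$. Since $d/ds = t^{1/2}\,d/d\tau$, where $\tau$ is arc length for $\tilde\omega_Y$, each $s$-derivative gains a factor $t^{1/2}$. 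This gives $\|\varphi'\|_{C^{k}}=O(t^{1/2})$. The difference between the true and model codifferential is $O(t^{1/2})\,\|\alpha\|=O(t^{1/2})$, so $\|d^*\alpha\|_{C^{k,\alpha}(L)}=O(t^{1/2})$.

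Combining these with the identity from Step 1 gives the improved estimate
\[
  \Big|\int_{L_s}d^*\alpha\Big|\le C\,t^{1/2}\,\|d^*\alpha\|_{C^0}.
\]
For the $C^{k,\alpha}$ version, differentiate in $s$ and use the same chart comparisons. The $s\mapsto\int_{L_s}$ operation commutes with $\partial_s$ up to terms controlled by the variation of the fibre metric, which is again $O(t^{1/2})$.

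\textbf{Main obstacle.} The hard part is making the uniform $C^{k,\alpha}$ bookkeeping rigorous, not the leading-order computation. In particular one must check that $\varphi'=O(t^{1/2})$ holds with all derivatives, and that the fibre integrals are differentiated correctly, given that the charts $\Psi_{y_0}$ only approximate the fibre identification. I would handle this by covering $\gamma$ with $O(t^{-1/2})$ charts of unit length, as in Proposition~\ref{prop:cylindrical}, and expressing all quantities as smooth functions of $(s,\text{fibre point})$ pulled back to $B_{y_0}\times X_{y_0}$. There every metric coefficient is within $O(t^{1/2})$ of its product value in $C^{k,\alpha}$.
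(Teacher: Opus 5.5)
Your proof is correct and is essentially the paper's argument. Since $|\nabla s|_g=\lambda^{-1/2}$ with $\lambda=a-|\beta|_h^2$, your choice $\varphi=\bigl(\int_{L_s}|\nabla s|\,dA_s\bigr)^{-1}$ is exactly the paper's $\bigl(\int_{L_s}\lambda^{-1/2}\bigr)^{-1}$. Likewise, your divergence-theorem/coarea identity is the invariant form of the paper's $\int_{L_s}\lambda^{1/2}d^*\alpha=-\partial_s\int_{L_s}\lambda^{-1/2}\varphi$.

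The one point to tighten is the claim that $|\nabla s|\equiv 1$ for the gradient on $L$ in the model metric. This only holds if $L$ is (nearly) horizontal in the chart, and Proposition~\ref{prop:cylindrical} alone does not give that. The paper secures it via Lemma~\ref{lemma:horizontal}: by $\sigma$-equivariance the $g_{SRF}$-horizontal lift of $\partial_s$ is tangent to $L$, which gives $a=1+O(t^{1/2})$ and $\beta=O(t^{1/2})$.
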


\begin{remark}
  By contrast, if one simply takes the ansatz $1$-form to be $ds$,
  then although $d^*ds = O(t^{1/2})$, its fibrewise average is also
  $O(t^{1/2})$. See the proof of Proposition~\ref{prop:error} below.
\end{remark}

We need the following lemma:

\begin{lemma}\label{lemma:horizontal}
  There exists a lift of $\partial_s$ in $TL$, which we
  also denote by $\partial_s$, such that the following holds. Let
  $\Phi: [0,\ell] \times L_0 \to L$ be the flow along $\partial_s$ and
  let $x \in L_0$. The induced metric $g_L$ on $L$ by the restriction
  of the Ricci-flat K\"ahler metric $\tilde\omega_t$ can be written as
  \[
    \Phi^*g_L = a(x,s)\,ds^2 + ds\odot\beta_s(x) + h_s(x).
  \]
  Here $h_s = \Phi^*(\tilde g_t|L_s)$ and $\beta_s(x) \in \Lambda^1(L_0)$. The components admit
  the following expansions:
  \[
    a(x,s) &= 1 + O(t^{\frac{1}{2}}), \\
    \beta_s(x) &= O(t^{\frac{1}{2}}), \\
    h_s - \tilde h_s &= O(t^{\frac{1}{2}}),
  \]
  where the $O(t^{1/2})$ terms are measured in the $C^{k,\alpha}(L)$ for any fixed $k$ and $\alpha$.
\end{lemma}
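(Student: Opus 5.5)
I would define the lift of $\partial_s$ using the semi-Ricci-flat geometry. On $\pi^{-1}(Y\setminus S)$, $\omega_{SRF}$ is a genuine K\"ahler metric, and $\pi$ is a submersion there. For each point of $L$ I take the $\omega_{SRF}$-horizontal lift $V$ of the unit tangent vector $\partial_s$ of $\gamma$, i.e.\ the unique vector in $(\ker d\pi)^{\perp_{SRF}}$ with $d\pi(V)=\partial_s$.

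The first step is to check that $V$ is tangent to $L$. Since $\sigma$ is anti-holomorphic and $\sigma^*\omega_{SRF}=-\omega_{SRF}$, the metric $g_{SRF}$ is $\sigma$-invariant. (Each piece is anti-invariant: $\omega_X$ by choice, $\tilde\omega_Y$ because $\sigma$ is an isometry of it up to sign, and $\psi$ by uniqueness of the fibrewise Ricci-flat metrics.) Hence $\sigma_*$ preserves the horizontal distribution. At a point of $L$ we have $\sigma_*V$ horizontal with $d\pi(\sigma_*V)=\sigma_*\partial_s=\partial_s$, because $\partial_s$ is tangent to $\gamma=\fix(\sigma)$. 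Uniqueness gives $\sigma_*V=V$, so $V\in T L$. I then let $\Phi$ be its flow. This is defined on $[0,\ell]\times L_0$ because $\gamma$ avoids $S$, $L$ is compact, and $d\pi(V)=\partial_s$ means $\Phi$ carries $L_0$ to $L_s$.

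Next I decompose. Writing $\Phi^*g_L=a\,ds^2+ds\odot\beta_s+h_s$ is simply the splitting into $\partial_s$ and vertical parts, with
\[
a=\tilde g_t(V,V),\qquad \beta_s=\tilde g_t(V,\cdot)|_{TL_s},\qquad h_s=\tilde g_t|_{L_s}.
\]
For $g_{SRF}$ the same quantities are exactly $1$, $0$ and $\tilde h_s$. The factor $1$ holds because $V$ is horizontal and the horizontal part of $\omega_{SRF}$ is $t^{-1}\pi^*\tilde\omega_Y$ up to terms from $\sqrt{-1}\partial\bar\partial\psi$. Here I must be careful: for a horizontal lift, $g_{SRF}(V,V)$ equals $t^{-1}\tilde g_Y(\partial_s,\partial_s)$ plus the horizontal--horizontal component of $\omega_X+\sqrt{-1}\partial\bar\partial\psi$. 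That component is not $t^{-1}$-scaled, so relative to the unit normalisation it is $O(t)$ (if not exactly $1$). Each difference from the $\tilde g_t$ quantities is therefore bounded by $\|\tilde g_t-g_{SRF}\|$ applied to $V$ and vertical vectors, plus $O(t)$. The first estimate of Proposition~\ref{prop:cylindrical} gives $O(t^{1/2})$ in $C^{k,\alpha}$ with respect to $\tilde\omega_t$. Since $|V|_{\tilde g_t}\approx 1$, this yields all three expansions pointwise.

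For higher derivatives, I cover $\gamma$ by finitely many $t$-dependent balls $B_{y_0}$ of bounded $t^{-1}\tilde\omega_Y$-diameter. Their number is $O(t^{-1/2})$, but the estimates are uniform, so this is harmless. On each ball I use $\Psi_{y_0}$ and the last estimate of Proposition~\ref{prop:cylindrical}. Together these show that $V$, and hence $\Phi$, is $C^{k,\alpha}$-close to the product lift $\partial_s$ on $B_{y_0}\times X_{y_0}$, uniformly in $t$. Consequently, pulling the tensor bounds back by $\Phi$ preserves $C^{k,\alpha}$ control, with norms measured in the uniformly equivalent product metrics.

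The main obstacle is this last point. One needs uniform $C^{k,\alpha}$ control of the flow $\Phi$ on the long interval $[0,\ell]$, where $\ell\sim t^{-1/2}$. Errors could accumulate along the flow, so the expansions should be stated for the pulled-back tensors local in $s$, i.e.\ on each chart $\Psi_{y_0}$. I would handle this by comparing the flow only on unit $s$-intervals, where Proposition~\ref{prop:cylindrical} gives uniform control, rather than comparing with $L_0$ globally. This suffices because the $O(t^{1/2})$ bounds concern local quantities ($a$, $\beta_s$, $h_s-\tilde h_s$), not the global identification.
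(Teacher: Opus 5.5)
Your proposal is correct and follows the paper's argument. The paper also takes the $g_{SRF}$-horizontal lift $\tilde\gamma_1'$ and shows $\sigma_*\tilde\gamma_1'=\tilde\gamma_1'$ by equivariance and uniqueness of the orthogonal splitting, so that it lies in $TL$; it then applies Lemma~\ref{lemma:integrable} with $\tilde\gamma_2'=\tilde\gamma_1'$, where the vertical correction vanishes and the estimates reduce to comparing $\tilde g_t$ with $g_{SRF}$ via Proposition~\ref{prop:cylindrical}. Your extra care about full $\sigma$-invariance of $g_{SRF}$, the $O(t)$ horizontal term, and local-in-$s$ control of the flow just makes explicit points the paper passes over.
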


To proceed, we first recall the integrability condition in
\cite{ChiuLin}. The general setting is the following.  Let $X$ be a
Calabi-Yau 3-fold equipped with a Lefschetz K3 fibration
$\pi: X \to Y$. Let $\gamma: [0,\ell] \to Y$ be a smooth path
parameterised by the arc length $s$ with respect to the metric induced
by $(1/t)\tilde\omega_Y$. Suppose for each $s \in [0,\ell]$ there is a
special Lagrangian submanifold $L_s$ in the corresponding K3 fibre
$X_s$, varying smoothly in $s$.

\begin{definition}
  \label{def:integrable}
  Let $X_\gamma = \cup_{s\in[0,\ell]} X_s$. Let
    $\tilde\gamma_1' \in \Gamma(TX_\gamma)$ be the lift of $\gamma'$
    that is orthogonal to $TX_s$ for $s\in[0,\ell]$ with respect to
    $g_{SRF}$. We say $L$ satisfies the integrability condition if
    there exists a lift $\tilde\gamma_2' \in \Gamma(TL)$ of $\gamma'$
    such that the following hold. For every $s \in [0,\ell]$,
    $\omega_s(\tilde\gamma_2'(s)-\tilde\gamma_1'(s), \cdot)|_{L_s}$ is
    $L^2$-orthogonal to the space of harmonic $1$-forms on
    $L_s \subset X_s$ with respect to $\tilde h_s$, the metric induced
    by the semi-Ricci flat metric $g_{SRF}$.
\end{definition}

\begin{remark}
  Although $\omega_{SRF}$ depends on $t$, the $1$-dimensional
  horizontal distributions $\mathbb{R}\langle \tilde\gamma_1'\rangle$
  and $\mathbb{R}\langle \tilde\gamma_2'\rangle$ do not depend on $t$.
\end{remark}

\begin{lemma}\label{lemma:integrable}
  If $L$ is integrable in the sense of
  Definition~\ref{def:integrable}, then $g_L$ satisfies the properties
  in Lemma~\ref{lemma:horizontal}.
\end{lemma}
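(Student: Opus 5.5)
We take $\partial_s := \tilde\gamma_2'$, the lift provided by the integrability condition, and expand each component of $\Phi^*g_L$ using Proposition~\ref{prop:cylindrical}. The metric $g_L$ is the restriction of $\tilde g_t$ to $L$. By the first estimate of Proposition~\ref{prop:cylindrical}, $\tilde\omega_t=\omega_{SRF}+O(t^{1/2})$ in $C^{k,\alpha}$ with respect to $\tilde\omega_t$, and hence $g_L = g_{SRF}|_L + O(t^{1/2})$. It therefore suffices to prove the three expansions with $g_{SRF}$ in place of $\tilde g_t$. In particular $h_s-\tilde h_s=O(t^{1/2})$ is immediate, since $h_s$ and $\tilde h_s$ are the restrictions of $\tilde g_t$ and $g_{SRF}$ to $L_s$.

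Next we split $\partial_s=\tilde\gamma_1'+V$, where $V=\tilde\gamma_2'-\tilde\gamma_1'$ is vertical, i.e. tangent to $X_s$. The vector $\tilde\gamma_1'$ is $g_{SRF}$-orthogonal to the fibres and projects to $\gamma'$, which has unit length for $t^{-1}\tilde\omega_Y$. Thus
\[
g_{SRF}(\partial_s,\partial_s)=1+g_{SRF}(V,V), \qquad g_{SRF}(\partial_s,W)=g_{SRF}(V,W)
\]
for $W$ tangent to $L_s$. On a fibre, $g_{SRF}$ restricts to the Ricci-flat metric $\omega_s$, which is independent of $t$. So the statement reduces to showing that $V$ is small, namely $|V|_{g_{SRF}}=O(t^{1/2})$ in $C^{k,\alpha}$.

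The key observation is that $\tilde\gamma_2'$ is determined only up to adding a vertical field tangent to $L_s$. The orthogonality condition in Definition~\ref{def:integrable} fixes the harmonic part of $\iota_V\omega_s|_{L_s}$, and we use the remaining freedom to choose $V$ of size $O(t^{1/2})$. To do this we compute in the product model of Proposition~\ref{prop:cylindrical}. Consider $L$ near $L_{s_0}$, pulled back by $\Psi_{y_0}$. It is special Lagrangian for $\Psi^*\tilde\omega_t$ and $\Psi^*\Omega$, which are $O(t^{1/2})$-close to the product structure $t^{-1}\tilde\omega_Y+\omega_{y_0}$ and $dy\wedge\Omega_{y_0}$. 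In the product, the special Lagrangians fibring over the real curve $\gamma$ are products $\gamma\times L_{y_0}$ up to deformation. Moreover, the first-order variation of $L_s$ in $s$ is measured in the rescaled variable $s$, where a unit of $s$ corresponds to base distance $t^{1/2}$. The McLean deformation $1$-form $\iota_V\omega_s|_{L_s}$ is therefore $O(t^{1/2})$ in its non-harmonic part. Concretely, the Lagrangian and special conditions for $L$ force $\iota_V\omega_s|_{L_s}$ to be closed and coclosed up to $O(t^{1/2})$ errors. By the orthogonality in the integrability condition, its harmonic component can be taken to vanish. Hodge theory on the fixed surface $(L_s,\tilde h_s)$, whose metric is $t$-independent with uniform constants as $s$ ranges over the compact curve $\gamma$, then gives $\|\iota_V\omega_s|_{L_s}\|_{C^{k,\alpha}}=O(t^{1/2})$. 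Since $L_s$ is Lagrangian in $X_s$, $V\mapsto\iota_V\omega_s|_{L_s}$ is an isomorphism from the normal bundle of $L_s$ onto $T^*L_s$, so the normal part of $V$ is $O(t^{1/2})$. We take the tangential part to be zero.

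With this, $a=1+O(t)$ and $\beta_s=g_{SRF}(V,\cdot)|_{L_s}=O(t^{1/2})$. Pulling back by the flow $\Phi$, whose derivatives are controlled because $\partial_s$ is $O(t^{1/2})$-close to the $t^{-1}\tilde\omega_Y$-horizontal lift in the product chart, transfers all the estimates to $C^{k,\alpha}(L)$. The main obstacle is the third step: quantifying, with $s$-uniform constants, that the non-harmonic part of the vertical deviation is $O(t^{1/2})$. This requires linearising the special Lagrangian condition in the product chart and tracking how the $O(t^{1/2})$ errors in $\tilde\omega_t$ and $\Omega$ from Proposition~\ref{prop:cylindrical} enter. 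The approach is a local-to-global patching over finitely many charts $B_{y_0}$ of bounded $t^{-1}\tilde\omega_Y$-diameter covering $\gamma$.
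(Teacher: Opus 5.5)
Your route is the paper's. You take $\partial_s=\tilde\gamma_2'$ and reduce to smallness of the vertical part $V=\tilde\gamma_2'-\tilde\gamma_1'$. You show that $\alpha_s=\iota_V\omega_s|_{L_s}$ is closed and coclosed up to $O(t^{1/2})$ by differentiating the Lagrangian and special Lagrangian conditions in $s$. You then use the orthogonality in Definition~\ref{def:integrable} to remove the harmonic part, apply the coercive estimate on $(L_s,\tilde h_s)$, and dualise via $\omega_s$. Discarding the tangential part of $V$ is a point the paper passes over silently, so that is a good addition. A harmless slip: $g_{SRF}(\tilde\gamma_1',\tilde\gamma_1')$ is $1+O(t)$ rather than exactly $1$, because $\omega_X+\sqrt{-1}\partial\bar\partial\psi$ has horizontal components.

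What is missing is the passage from fibrewise to total estimates. Hodge theory on each $L_s$ bounds $\alpha_s$ in $C^{k,\alpha}(L_s)$ uniformly in $s$. The conclusion of Lemma~\ref{lemma:horizontal}, however, is in $C^{k,\alpha}(L)$, which also needs bounds on $\partial_s^j\alpha_s$, and hence on $\partial_s^jV$, $\partial_s^ja$ and $\partial_s^j\beta_s$. Your sentence about pulling back by $\Phi$ does not supply these, since controlling $\Phi$ in $C^{k,\alpha}$ presupposes exactly this $s$-regularity of $V$.

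The paper closes this gap by writing $\partial_s\alpha_s=(1-\Pi_s)\partial_s\alpha_s+\Pi_s\partial_s\alpha_s$, where $\Pi_s$ is the $L^2$-projection onto $\tilde h_s$-harmonic $1$-forms. The first piece is bounded by the coercive estimate applied to $D_s(\partial_s\alpha_s)=\partial_s(D_s\alpha_s)-(\partial_sD_s)\alpha_s$. The second is bounded via $\Pi_s\partial_s\alpha_s=-(\partial_s\Pi_s)\alpha_s$, obtained by differentiating $\Pi_s\alpha_s=0$. Both are $O(t)$, and higher $s$-derivatives follow by induction.

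Alternatively, your own remark that a unit of $s$ corresponds to base distance $t^{1/2}$ settles everything at once if made precise:
\begin{itemize}
\item $g_{SRF}$-orthogonality to $TX_s$ only involves $\omega_X+\sqrt{-1}\partial\bar\partial\psi$, because $\pi^*\tilde\omega_Y$ vanishes on pairs with a vertical entry. So the line spanned by $\tilde\gamma_1'$ is $t$-independent.
\item The line spanned by $\tilde\gamma_2'$ may also be chosen $t$-independent, since the orthogonality condition only involves $\omega_s$ and $\tilde h_s$.
\item We have $\gamma'=t^{1/2}\nu$, with $\nu$ the $\tilde\omega_Y$-unit tangent.
\end{itemize}
Hence $V=t^{1/2}\hat V$ for a fixed smooth vertical field $\hat V$ along $L$. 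Every $\partial_s$ falling on $t$-independent data gains a further factor $t^{1/2}$, so $V=O(t^{1/2})$ in $C^{k,\alpha}(L)$ follows directly. In particular, the Hodge-theoretic step you flag as the main obstacle is not actually needed for this lemma.
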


\begin{proof}
  We choose the lift $\partial_s$ from Lemma~\ref{lemma:horizontal} to be $\partial_s=\tilde{\gamma}_2' \in \Gamma(TL)$.
  The estimates for $h_s-\tilde h_s$ follow directly from
  Proposition~\ref{prop:cylindrical}. We focus on the estimates for
  $a(x,s)$ and $\beta_s(x)=\beta(s,x)$.
  
  Locally near $s_0 \in [0,\ell]$, let $F_s: L_{s_0} \to X_s$ with
  $F_s(L_{s_0}) = L_s$ be the coordinate map such that
  $\tilde\gamma_2' = \partial_s F_s$. Let
  $V_s = \tilde\gamma_2' - \tilde\gamma_1'$ denote the projection onto
  the vertical component $TX_s$.

  Define $\alpha_s = \iota_{V_s}\omega_s|_{L_s}$. Since $L_s$ is
  Lagrangian with respect to $\omega_s$, we have
  \[
    F_s^*\omega_s = 0.
  \]
  Differentiating in $s$, we get
  \[
    0 = \left.\frac{d}{ds}\right|_{s=s_0} F_s^*\omega_s = d\alpha_{s_0} + \left.\nabla_s \omega_s\right|_{s=s_0},
  \]
  where $\nabla_s$ is the fibrewise derivative using the horizontal
  lift. It follows that
  \[
    d_{s_0}\alpha_{s_0} = - \left.\nabla_s \omega_s\right|_{s=s_0}.
  \]
  Here $d_s$ denotes the differential on $L_s$. By
  Proposition~\ref{prop:cylindrical}, we have
  \[
    \|d_{s_0}\alpha_{s_0}\|_{C^{k,\alpha}(L_{s_0})} \le Ct^{\frac{1}{2}}.
  \]

  Next, since $L_s$ is a special Lagrangian submanifold in
  $(X_s,\omega_s,A_{s_0}^{-1/2}\Omega_{s_0})$, we have
  \[
    F^*_s\operatorname{Im}(e^{-i\phi(s)}\Omega_s) = 0
  \]
  for some $\phi:\gamma \rightarrow \R$.
  Differentiating in $s$, we get
  \[
    0 = \left.\frac{d}{ds}\right|_{s=s_0}F_s^*\operatorname{Im}(e^{-i\phi(s)}\Omega_s)
    = d\iota_{V_{s_0}}\operatorname{Im}(e^{-i\phi(s_0)}\Omega_{s_0})
    + \left.\nabla_s\operatorname{Im}(e^{-i\phi(s)}\Omega_s)\right|_{s=s_0}.
  \]
  It follows that
  \[
    d_{s_0}^*\alpha_{s_0} = -*\left.\nabla_s\operatorname{Im}(e^{-i\phi(s)}\Omega_s)\right|_{s=s_0}.
  \]
  Note that $\phi'(s)$ has size of the same order as $\nabla_s \Omega_s$. By
  Proposition~\ref{prop:cylindrical}, we have
  \[
    \|d_{s_0}^*\alpha_{s_0}\|_{C^{k,\alpha}(L_{s_0})} \le Ct^{\frac{1}{2}}.
  \]
  Here $d_s^*$ is with respect to $\tilde h_s$.

  Now, by the assumption that $\alpha_s$ is $L^2$-orthogonal to the
  kernel of $D_s = d_s+d_s^*$, the coercive estimate gives
  \[
    \|\alpha_s\|_{C^{k+1}(L_s)} \le C\|D_s\alpha_s\|_{C^{k,\alpha}(L_s)}
    \le Ct^{\frac{1}{2}},
  \]
  where the constant $C>0$ depends on $L$ but is independent of $t$.

  Next, we estimate the derivatives in $s$. For a $1$-form $\eta_s(x)$
  on $L_s$, let $\Pi_s\eta_s$ denote the $L^2$ projection onto the
  space of harmonic $1$-forms on $(L_s,\tilde h_s)$. We have the
  following orthogonal decomposition:
  \[
    \partial_s\alpha_s = (1-\Pi_s)\partial_s\alpha_s + \Pi_s(\partial_s\alpha_s).
  \]

  The first term on the right hand side of the decomposition can be
  estimated as follows. First we note that
  \[
    \partial_s(D_s\alpha_s) = (\partial_sD_s)\alpha_s + D_s(\partial_s\alpha_s),
  \]
  and therefore
  \[
    D_s(\partial_s\alpha_s) = \partial_s(D_s\alpha_s)-(\partial_sD_s)\alpha_s.
  \]
  It follows that
  \[
    \|(1-\Pi_s)\partial_s\alpha_s\|_{C^{k,\alpha}(L_s)}
    &\le C\|D_s(1-\Pi_s)\partial_s\alpha_s\|_{C^{k-1,\alpha}(L_s)} \\
    &\le C\left(\|\partial_s(D_s\alpha_s)\|_{C^{k-1,\alpha}(L_s)} 
    +\|(\partial_sD_s)\alpha_s\|_{C^{k-1,\alpha}(L_s)} \right)\\
    &\le Ct. \\
  \]

  For the second term, note that
  \[
    0 = \partial_s(\Pi_s\alpha_s) = (\partial_s\Pi_s)\alpha_s + \Pi_s(\partial_s\alpha_s),
  \]
  and therefore
  \[
    \Pi_s(\partial_s\alpha_s) = -(\partial_s\Pi_s)\alpha_s.
  \]
  It follows that
  \[
    \|\Pi_s(\partial_s\alpha_s)\|_{C^{k,\alpha}(L_s)} \le \|(\partial_s\Pi_s)\alpha_s\|_{C^{k,\alpha}(L_s)}
    \le Ct.
  \]

  Combining the above estimates, we conclude that
  \[
    \|\partial_s\alpha_s\|_{C^{k,\alpha}(L_s)} \le Ct.
  \]
  For higher derivatives in $s$, we can proceed by induction.

  Having estimated $\alpha_s$, it is immediate from the definition that
  $V_s = O(t^{1/2})$ in $C^{k,\alpha}(L)$ by taking the dual of
  $\alpha_s$ with respect to $\omega_{SRF}$.

  To estimate $a(x,s)$, we use \ref{prop:cylindrical} and compute
  \[
    a(x,s) &= g_L(\tilde\gamma_2', \tilde\gamma_2') \\
    &= \tilde g_t(\tilde\gamma_1' + V_s, \tilde\gamma_1' + V_s) \\
    &= g_{SRF}(\tilde\gamma_1' + V_s, \tilde\gamma_1' + V_s) + O(t^{1/2}) \\
    &= 1 + O(t^{1/2}).
  \]

  Similarly, to estimate $\beta_s(x)$, fix $W_s \in TL_s$ with
  $|W_s| = O(1)$ and compute
  \[
    \beta_s(W) &= g_L(\tilde\gamma_2', W_{\color{red}s}) \\
    &= \tilde g_t(\tilde\gamma_1'+ V_s, W_{\color{red}s}) \\
    &= g_{SRF}(\tilde\gamma_1'+ V_s, W_{\color{red}s}) + O(t^{1/2}) \\
    &= O(t^{1/2}).
  \]

  This completes the proof.
\end{proof}

\begin{proof}[Proof of Lemma~\ref{lemma:horizontal}]
  By Lemma~\ref{lemma:integrable}, it is enough to show that $L$
  satisfies the integrability condition in the sense of
  Definition~\ref{def:integrable}.
  
  Let $\tilde\gamma_1'$ be the unique vector field such that
  $\tilde\gamma_1'(s) \in (TX_s)^\perp$ with respect to the semi-Ricci
  flat metric $\omega_{SRF}$. For $s \in [0,\ell]$, we
    write $\sigma_s = \sigma|_{X_s}$. Note that by the uniqueness of the
  Calabi-Yau metric, we have
  \[
    \sigma_s^*\omega_s = -\omega_s,
  \]
  and thus the fibrewise Riemannian metric is invariant under pullback
  by $\sigma_s$.
  By equivariance,
  \[
    \pi_*\sigma_{*}\tilde\gamma_1' = \sigma_{*}\pi_*\tilde\gamma_1'
    =\sigma_{*}\gamma'
    = \gamma'.
  \]
  It follows from uniqueness of the orthogonal decomposition that
  \[
    \sigma_{s*}\tilde\gamma_1' = \tilde\gamma_1',
  \]
  and thus
  \[
    \tilde\gamma_1' \in TL.
  \]

  Next, we simply let
  \[
    \tilde\gamma_2' = \tilde\gamma_1'|_L.
  \]
  Then we immediately have
  \[
    \omega_s(\tilde\gamma_2'-\tilde\gamma_1',\cdot) = 0,
  \]
  which completes the proof.
\end{proof}

\begin{proof}[Proof of Proposition~\ref{prop:error}]
  Choose a local coordinate system $x^i$ on $L_0$. By
  Lemma~\ref{lemma:horizontal}, we can write
  \[
    g_L = a\,ds^2 + 2\beta_i\,ds\,dx^i + h_{ij}\,dx^i\,dx^j.
  \]
  
  Let $\beta^\# =\beta^i\partial_{x^i}= h^{ij}\beta_j\partial_{x^i}$
  be the dual of $\beta$ with respect to $h$. Then we can diagonalise
  \[
    a\,ds^2 + 2\beta_i\,ds\,dx^i + h_{ij}\,dx^i\,dx^j =
    \lambda (e^0)^2 + h_{ij}e^i\,e^j,
  \]
  where $\lambda = a - |\beta|_h^2$ and the coframe is given by
  $e^0 = ds, e^i = dx^i+\beta^ids$. The dual frame is then given by
  $e_0 = \partial_s - \beta^i\partial_{x^i},
  e_i=\partial_{x^i}$. Using the dual frame, the dual metric is given
  by
  \[
    g^{-1} = \lambda^{-1} (e_0)^2 + h^{ij}e_i\,e_j,
  \]

  Now, for a $1$-form $\alpha = \alpha_\mu e^\mu$, we have
  \[
    d^*\alpha = -\frac{1}{\sqrt{\det g}}e_\mu\left(\sqrt{\det g} \alpha^\mu\right),
  \]
  where $\det g = \lambda \det h$.
  For $\alpha = f(s)\,ds$, we have $\alpha^\# = \lambda^{-1}f(s)e_0$.

  Fix $s_0 \in [0,\ell]$. Thanks to Lemma~\ref{lemma:horizontal} and
  Proposition~\ref{prop:cylindrical}, we have
  \[
    \|h_s - h_{s_0}\|_{C^{k,\alpha}(L)} \le Ct^{\frac{1}{2}}
  \]
  for $s \in (s_0-\varepsilon,s_0+\varepsilon)$ for some fixed
  $\varepsilon>0$. Here we use the horizontal lifting to identify
  $\pi|_{L}^{-1}(s_0-\varepsilon,s_0+\varepsilon)$ with
  $(s_0-\varepsilon,s_0+\varepsilon) \times L_{s_0}$, and the formula
  for $d^*\alpha$ simplifies to
  \[\label{eq:laplacian}
    d^*\alpha = -\frac{1}{\sqrt{\lambda}\sqrt{\det h}}
    (\partial_s-\beta^\#)\left(\sqrt{\lambda^{-1}}\sqrt{\det h} \,\varphi\right).
  \]
  By the divergence formula, integrating over a fibre $L_s$ yields
  \[
    \int_{L_s} \lambda^{\frac{1}{2}}d^*\alpha =
    -\partial_s\int_{L_s} \lambda^{-\frac{1}{2}}\varphi.
  \]

  Setting
  \[
    \varphi(s) = \left(\int_{L_s}\lambda^{-\frac{1}{2}}\right)^{-1},
  \]
  we see that
  \[
    \int_{L_s} \lambda^{\frac{1}{2}}d^*\alpha = 0.
  \]
  
  By Lemma~\ref{lemma:horizontal}, it is easy to see that for
  sufficiently small $t > 0$, $\lambda = 1 + O(t^{1/2})$ and
  $\varphi(s)$ is nowhere vanishing. Combining these, we conclude that
  \[
    \int_{L_s} d^*\alpha = O(t^{\frac{1}{2}}\|d^*\alpha\|_{C^{k,\alpha}(L)}),
  \]
  together with higher order estimates.
  
  Finally, by Lemma~\ref{lemma:horizontal} and
  Proposition~\ref{prop:cylindrical}, we have $\beta = O(t^{1/2})$ and
  $\partial_s h_s = O(t^{1/2})$. Thus, by \eqref{eq:laplacian}, we
  conclude that $\|d^*\alpha\|_{C^{k,\alpha}(L)} = O(t^{1/2})$.
\end{proof}

We write
\[
  f = d^*\alpha
\]
for the coclosedness error of $\alpha$. By Hodge theory, there exists
$\hat\eta \in C^{k+1,\alpha}(L)$ such that $d\hat\eta = 0$ and
$d^* \hat\eta = f$. Then the $1$-form
\[
  \eta = \alpha - \hat\eta
\]
is harmonic. Indeed,
\[
  (d+d^*)\hat\eta = d^*\alpha - d^*\hat\eta = f - d^*\hat\eta = 0.
\]

The difficulty is that the coercive estimate for the Hodge-Dirac
operator $d+d^*$ is not uniform in $t$. Since the diameter of $L$ is
$O(t^{-1/2})$, one has
\[
  \|\hat\eta\|_{C^{k+1,\alpha}(L)} \le Ct^{-\frac{1}{2}}\|(d+d^*) \hat\eta\|_{C^{k,\alpha}(L)} = O(1),
\]
so a priori it is not clear if $\eta$ is a small perturbation of
$\alpha$. In the following, we solve the equation with
  estimates using a parametrix construction. Namely, we solve the
  equation on model cylinders (see Lemma~\ref{lemma:cylinder}) and on
  the ``base direction''. Gluing these local solutions together, we
  obtain a parametrix. The parametrix can then be corrected by a
  Neumann series to obtain an actual right inverse with the desired
  estimates. The smallness of the fibrewise average established in
  Proposition~\ref{prop:error} is crucial for showing that the
  solution is small.

\subsection{The parametrix}

We now state our main technical result:

\begin{prop}
  \label{prop:parametrix}
  For $t>0$ sufficiently small, the following holds. Let
  $f \in C^{k,\alpha}(L)$ such that
  \[
    \int_L f = 0, \qquad
    \left\|s\mapsto\int_{L_s} f\right\|_{C^{k,\alpha}(L)} =
    O(t^{\frac{1}{2}}\|f\|_{C^{k,\alpha}(L)}).
  \]
  Then there exist
  $u_{fib} \in C^{k+2,\alpha}(L)$ and $u_{base} \in C^{k+2,\alpha}(L)$ which is
  constant along fibres such that $u= u_{fib} + u_{base}$ satisfies
  $\Delta u = f$ and that
  \[
    \|u_{fib}\|_{C^{k+2,\alpha}(L)} + \|du_{base}\|_{C^{k+1,\alpha}(L)}  \le C\|f\|_{C^{k,\alpha}(L)}
  \]
\end{prop}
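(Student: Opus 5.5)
Split $f$ into a fibrewise-mean-zero part and a base part, invert each separately with $t$-uniform bounds, and absorb the coupling error by a Neumann series. Let $\bar f(s) = |L_s|^{-1}\int_{L_s} f$ denote the fibrewise average, regarded as a function on $L$, and write $f = (f-\bar f) + \bar f$. By hypothesis, $\|\bar f\|_{C^{k,\alpha}} = O(t^{1/2}\|f\|)$, since $|L_s|$ is bounded above and below uniformly (the fibres $(L_s,\tilde h_s)$ converge to fixed smooth surfaces by Proposition~\ref{prop:cylindrical} and Lemma~\ref{lemma:horizontal}).

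\textbf{Fibre part.} For the mean-zero part, on each fibre $(L_s,h_s)$ solve $\Delta_{h_s} v_s = f-\bar f$ with $\int_{L_s} v_s=0$. The fibres form a compact smooth family with uniformly controlled geometry, so Schauder theory gives $\|v_s\|_{C^{k+2,\alpha}(L_s)}\le C\|f\|$ uniformly in $s$ and $t$. Differentiating the equation in $s$ and arguing as in the proof of Lemma~\ref{lemma:integrable} (the $s$-derivatives of $h_s$ are $O(t^{1/2})$ per unit arc length) controls the $s$-derivatives. Hence $v(s,x)=v_s(x)$ lies in $C^{k+2,\alpha}(L)$ with $\|v\|\le C\|f\|$; this is the model-cylinder step. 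Using the local product form of $g_L$ from Lemma~\ref{lemma:horizontal} and formula \eqref{eq:laplacian}, $\Delta_{g_L} v = \Delta_{h_s} v + E_1$, where $E_1$ involves $\partial_s^2 v$, $\partial_s v$ and $\beta$-terms. Since $\partial_s v=O(t^{1/2})$ because $h_s$ varies slowly in the rescaled arc length, we get $\|E_1\|\le Ct^{1/2}\|f\|$. I expect to need to check this carefully, but it is the standard adiabatic estimate.

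\textbf{Base part.} On $\gamma\cong \R/\ell\mathbb Z$, with $\ell = O(t^{-1/2})$, solve the ODE $-\lambda^{-1/2}V(s)^{-1}\partial_s\big(V(s)\lambda^{-1/2}w'(s)\big)= \bar f$, where $V(s)=|L_s|$. This is the fibre-averaged Laplacian on fibre-constant functions; the weights are chosen so that its compatibility condition is exactly $\int_L f=0$. Integrating once gives $w'$ explicitly, and $\|w'\|_{C^{k+1,\alpha}} \le C\ell\|\bar f\| \le C t^{-1/2}\cdot t^{1/2}\|f\|=C\|f\|$. This is precisely where the cancellation hypothesis is used, and $w$ itself is only controlled through $dw$, matching the statement. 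Set $u_{base}=w$. The error is $E_2=\Delta w-\bar f$, which involves only $w'$ times fibrewise-varying coefficients minus their averages, so $E_2 = O(t^{1/2}\|f\|)$. Its fibrewise average is zero by construction of the ODE, up to the difference between $\lambda^{1/2}$-weighted and plain averages, which is again small.

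\textbf{Neumann series.} Define $Pf = v + w$, so that $\Delta P f = f + Ef$ with $\|Ef\|\le Ct^{1/2}\|f\|$. The main obstacle is that $Ef$ must again satisfy the hypotheses for the iteration to close. Its fibre average must be $O(t^{1/2}\|Ef\|)$, not merely $O(\|Ef\|)$, and $\int_L Ef=0$ must hold. The latter follows from $\int_L \Delta(\cdot)=0$. For the former, I would redefine the splitting: apply the base step to the full fibrewise average of each residual, and measure residuals in a weighted norm $\|f\|_* = \|f\|+t^{-1/2}\|\bar f\|$, for which the hypothesis is exactly boundedness. One then shows $\|E f\|_*\le Ct^{1/2}\|f\|_*$. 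The delicate term is the fibre average of $E_1$: it equals the average of $\partial_s$-derivative terms of $v$, which by the divergence identity of Proposition~\ref{prop:error} is a total $s$-derivative of $\int_{L_s}v=0$ plus $O(t)$ terms. For $t$ small, $\sum_n (-E)^n$ converges in $\|\cdot\|_*$, giving $u=P\sum_n(-E)^n f$ with $\Delta u=f$. Collecting the fibre and base parts separately yields $u=u_{fib}+u_{base}$ with $\|u_{fib}\|_{C^{k+2,\alpha}}+\|du_{base}\|_{C^{k+1,\alpha}}\le C\|f\|_*\le C'\|f\|_{C^{k,\alpha}}$.
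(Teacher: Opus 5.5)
Your base step matches the paper's, provided you write the weights as $W(s)=\int_{L_s}\lambda^{1/2}$ and $\tilde V(s)=\int_{L_s}\lambda^{-1/2}$, since $\lambda$ depends on $x$. The fibre step, however, has a genuine gap. Solving $\Delta_{h_s}v_s=f-\bar f$ fibre by fibre inverts only the vertical part of the Laplacian, and the term $-\partial_s^2 v$ you discard is not small. The hypotheses say nothing about slow variation of $f$ in $s$: the $C^{k,\alpha}(L)$ norm is taken with respect to $g_L$, in which $s$ has unit scale. So $\partial_s v_s$, which solves $\Delta_{h_s}\partial_s v_s=\partial_s(f-\bar f)-(\partial_s\Delta_{h_s})v_s$, is of size $\|f\|$, not $t^{1/2}\|f\|$. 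The analogy with Lemma~\ref{lemma:integrable} fails because there the source terms are geometric and vary slowly; you are conflating slow variation of $h_s$ with slow variation of the data. Already on an exact product $\mathbb{R}\times M$, the datum $f=\cos(s)\phi_1$ with $\Delta_M\phi_1=\lambda_1\phi_1$ gives $v=\lambda_1^{-1}f$ and $\Delta v-f=\lambda_1^{-1}f$, an $O(1)$ relative error for every $t$. In addition, fibrewise inversion gains two derivatives only along the fibres. So $v\notin C^{k+2,\alpha}(L)$ in general, $\partial_s^2 v$ costs two $s$-derivatives of $f$, and no iteration can close in $C^{k,\alpha}$. The missing idea is to invert the full model operator $-\partial_s^2+\Delta_{L_{s_i}}$ on fibrewise mean-zero functions on the cylinder $\mathbb{R}\times L_{s_i}$, as in Lemma~\ref{lemma:cylinder}. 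There the spectral gap $\lambda_1(L_{s_i})>0$ gives a uniformly bounded inverse gaining two derivatives in all directions, with decay $e^{-m|s|}$. That decay is what allows localisation with cutoffs on intervals of length $\Lambda\sim|\log t|$ and yields the relative error bound of Lemma~\ref{lemma:fiberwise}.

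Your Neumann-series step also does not close as set up. Even with the cylinder inverse, $\partial_s u_{fib}$ is $O(\|f\|)$. The fibre average of the fibre-step residual therefore contains terms such as $\int_{L_s}(\partial_s\log\sqrt{\det h_s})\,\partial_s u_{fib}$, which are generically of size $t^{1/2}\|f\|$ rather than $t\|f\|$; your total-derivative argument rests on the false bound $\partial_s v=O(t^{1/2})$. So $\|Ef\|_*\le Ct^{1/2}\|f\|_*$ fails in general, consistent with the paper's remark that $f_{[1]}$ need not have small relative fibrewise average. The weighted norm becomes unnecessary if you order the steps as the paper does. Solve the base ODE first, then feed $\tilde f=f-\Delta u_{base}$ into the cylinder step; it has vanishing $\lambda^{1/2}$-weighted fibre average and $\|\tilde f\|\le C\|f\|$. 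The base error is then absorbed rather than left in the residual. In your scheme $E_2$ is only $O(\|\bar f\|)$, which stops being contractive once $\bar f$ is no longer small. With the paper's ordering the residual is just the fibre-step error, so $\|f_{[n+1]}\|\le Ct^{1/2}\|f_{[n]}\|$ in the plain norm. The crude bound $\|du_{[n],base}\|\le Ct^{-1/2}\|f_{[n]}\|\le Ct^{-1/2}(Ct^{1/2})^n\|f\|$ is then summable, and the cancellation hypothesis is needed only for $n=0$.
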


\begin{proof}[Proof of Theorem~\ref{thm:oneforms}]
  By Propositions \ref{prop:error} and \ref{prop:parametrix}, there exists
  \[
    u = u_{fib} + u_{base} \in C^{k+2,\alpha}(L),
  \]
  with $u_{base}$ depending only on the base variable $s$, such that
  \[
    \Delta u = d^*\alpha
  \]
  together with the estimate
  \[
    \|u_{fib}\|_{C^{k+2,\alpha}(L)} + \|du_{base}\|_{C^{k+1,\alpha}(L)} \le Ct^{\frac{1}{2}}.
  \]
  It follows that
  \[
    \|du\|_{C^{k+1,\alpha}(L)} \le Ct^{\frac{1}{2}}.
  \]
  Thus, for sufficiently small $t>0$, $\eta = \alpha - du$ remains
  nowhere vanishing, since $\alpha = O(1)$ is nowhere vanishing. This
  completes the proof.
\end{proof}

The rest of this section is dedicated to the proof of
Proposition~\ref{prop:parametrix}. Suppose $f \in C^{k,\alpha}(L)$
satisfies the assumptions in Proposition~\ref{prop:parametrix}. Define
\[
  f_{[0]} = f.
\]
The parametrix we construct has two components: the local inverses
applied to the localised, fibrewise average-zero part of the source
term, and the inverse applied to the fibrewise average part, which
depends only on the base variable $s$.

We will be using the notions from the proof of
Proposition~\ref{prop:error}. For $s \in [0,\ell]$, define
\[
  W(s) = \int_{L_s} \lambda^{\frac{1}{2}}, \qquad \tilde V(s) = \int_{L_s} \lambda^{-\frac{1}{2}}.
\]
The weighted fibrewise average of $f_{[0]}$ is defined to be
\[
  \bar f_{[0]} = \frac{1}{W(s)}\int_{L_s} \lambda^{\frac{1}{2}}f_{[0]}.
\]

By assumption, we have
\[
  \int_{\gamma} W(s)\bar f_{[0]} \,ds =
  \int_L f_{[0]} = 0, \qquad
  \|\bar f_{[0]}\|_{C^{k,\alpha}(\gamma)} \le Ct^{\frac{1}{2}}\|f_{[0]}\|_{C^{k,\alpha}(\gamma)}.
\]

To invert $\bar f_{[0]}$, we solve the ODE
\[
  \Delta_{base} u(s) = -\frac{1}{W(s)}\frac{d}{ds}\Big(\tilde V(s)\frac{d}{ds} u(s)\Big) = \bar f_{[0]}(s).
\]

\begin{remark}\label{remark:laplacian}
  The linear operator $\Delta_{base}: C^\infty(\gamma) \to C^\infty(\gamma)$ defined as
  \[
      \Delta_{base}u(s) = -\frac{1}{W(s)}\frac{d}{ds}\Big(\tilde V(s)\frac{d}{ds} u(s)\Big)
  \]
  is the fibrewise averaged Laplacian with respect to the weight
  $\lambda^{1/2}$. To see this, let $u(s)$ be a function on
  $\gamma$. Then $du = u'(s)\,ds$. From \eqref{eq:laplacian}, we see
  that
  \[
    \Delta u = -\frac{1}{\sqrt{\lambda}\sqrt{\det h}}
    (\partial_s-\beta^\#)\left(\lambda^{-\frac{1}{2}}\sqrt{\det h} u'(s)\right).
  \]

  Taking the weighted average over a fibre yields
  \[
    \frac{1}{W(s)}\int_{L_s} \lambda^{\frac{1}{2}}\Delta u
    =
    -\frac{1}{W(s)}\frac{d}{d s}\int_{L_s} \lambda^{-\frac{1}{2}}u'(s)
    =
    -\frac{1}{W(s)}\frac{d}{d s} \left(\tilde V(s) \frac{d}{ds}u(s)\right).
  \]
  So  $\Delta_{base}$ is precisely the fibrewise-averaged $\Delta$.
\end{remark}

To solve the ODE, note that since $\int_\gamma W(s)\bar f_{[0]} = 0$, there
is a unique solution
\[
  u_{[0],base}(s) \in C^{k+2,\alpha}(\gamma)
\]
with
$u_{[0],base}(0) = 0$ such that
\[
  \|du_{[0],base}\|_{C^{k+1,\alpha}(\gamma)} \le C\ell\|\bar f_{[0]}\|_{C^{k,\alpha}(\gamma)}.
\]
Since $u_{[0],base}$ is constant along fibres and
$\ell = O(t^{-1/2})$, we conclude that
\[\label{eq:du}
  \|du_{[0],base}\|_{C^{k+1,\alpha}(L)} \le Ct^{-\frac{1}{2}}\|\bar f_{[0]}\|_{C^{k,\alpha}(L)}
\]

We have
\[
  \|\Delta u_{[0],base} - \bar f_{[0]}\|_{C^{k,\alpha}(L)}
  &\le \|(\Delta - \Delta_{base}) u_{[0],base}\|_{C^{k,\alpha}(L)} \\
  &\le C\|\bar f_{[0]}\|_{C^{k,\alpha}(L)} \\
  &\le Ct^{\frac{1}{2}}\|f_{[0]}\|_{C^{k,\alpha}(L)}.
\]
Furthermore, by Remark~\ref{remark:laplacian}, we have
\[
  \int_{L_s} \lambda^{\frac{1}{2}}(\Delta u_{[0],base} - \bar f_{[0]}) = 0.
\]
It follows that
\[
  \int_{L_s} (\Delta u_{[0],base} - \bar f)  
  &=\int_{L_s} (1-\lambda^{\frac{1}{2}})(\Delta u_{[0],base} - \bar f) \\
  &= O\Big(t^{\frac{1}{2}}\|\bar f_{[0]}\|_{C^{k,\alpha}(L)}\Big).
\]

Now, let us define
\[
  \tilde f_{[0]} = f_{[0]} - \Delta u_{[0],base},
\]
so
\[
  \int_{L_s} \lambda^{\frac{1}{2}} \tilde f_{[0]}
  =
  \int_{L_s} \lambda^{\frac{1}{2}} \big[(f_{[0]} - \bar f_{[0]}) + (\bar f_{[0]} - \Delta u_{[0],base})\big] = 0
\]
for all $s \in [0,\ell]$. We also have the following general estimate:

\begin{lemma}
  \[
    \|\Delta u_{[0],base}\|_{C^{k,\alpha}(L)}\le C\|\bar f_{[0]}\|_{C^{k,\alpha}(L)}.
  \]
\end{lemma}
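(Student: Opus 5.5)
We use the formula for $\Delta$ on functions of $s$ alone from Remark~\ref{remark:laplacian}, namely
\[
  \Delta u = -\frac{1}{\sqrt{\lambda}\sqrt{\det h}}
  (\partial_s-\beta^\#)\left(\lambda^{-\frac{1}{2}}\sqrt{\det h}\, u'(s)\right),
\]
applied to $u=u_{[0],base}$, and we eliminate $u''$ with the ODE. Since $\beta^\#$ is tangent to the fibres and $u'$ depends only on $s$, expanding the derivative gives
\[
  \Delta u = -\lambda^{-1}u''(s) - \lambda^{-\frac12}(\det h)^{-\frac12}\,(\partial_s-\beta^\#)\big(\lambda^{-\frac12}\sqrt{\det h}\big)\, u'(s).
\]
The coefficient of $u'$ is an expression in $\partial_s\lambda$, $\partial_s h$, $\beta$ and fibre derivatives of $\lambda$ and $h$. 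By Lemma~\ref{lemma:horizontal}, $\lambda = a-|\beta|_h^2 = 1+O(t^{1/2})$ and $\beta=O(t^{1/2})$. By Proposition~\ref{prop:cylindrical}, $\partial_s h_s = O(t^{1/2})$. All of these hold in $C^{k,\alpha}(L)$, and the $s$-derivatives are measured in the rescaled arc length. Hence the coefficient of $u'$ is $O(t^{1/2})$ in $C^{k,\alpha}(L)$.

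Next we bound $u''$. Expanding the ODE $-\frac{1}{W}(\tilde V u')' = \bar f_{[0]}$ gives
\[
  u'' = -\frac{W}{\tilde V}\bar f_{[0]} - \frac{\tilde V'}{\tilde V}u'.
\]
Here $W,\tilde V$ are comparable to the fibre area, which is uniformly bounded above and below because the fibres $L_s$ are special Lagrangian in $(X_s,\omega_s)$ and converge smoothly. We also have $\tilde V'=O(t^{1/2})$ with all its rescaled derivatives. Inserting both expressions into $\Delta u$ shows
\[
  \Delta u = \lambda^{-1}\frac{W}{\tilde V}\bar f_{[0]} + c(s,x)\,u'(s),
\]
where $c=O(t^{1/2})$ in $C^{k,\alpha}(L)$. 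The first term is bounded by $C\|\bar f_{[0]}\|_{C^{k,\alpha}}$, since $\lambda^{-1}W/\tilde V$ is bounded in $C^{k,\alpha}$ by the above.

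For the second term, \eqref{eq:du} gives $\|u'\|_{C^{k+1,\alpha}}\le Ct^{-1/2}\|\bar f_{[0]}\|_{C^{k,\alpha}}$. The factor $t^{1/2}$ in $c$ exactly cancels the $t^{-1/2}$ loss, so $\|c\,u'\|_{C^{k,\alpha}}\le C\|\bar f_{[0]}\|_{C^{k,\alpha}}$. Combining the two bounds gives the lemma.

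The main obstacle is the uniformity of the coefficient bounds. We need $\partial_s\lambda$, $\partial_s\log\det h$, $\tilde V'/\tilde V$ and their higher derivatives to all be $O(t^{1/2})$ with constants independent of $t$, globally along $\gamma$. This relies on covering $\gamma$ by boundedly many (after rescaling) neighbourhoods $B_{y_0}$ of Proposition~\ref{prop:cylindrical}, with constants uniform because $\gamma$ avoids $S$. It also relies on the $s$-derivative estimates established in the proof of Lemma~\ref{lemma:integrable}. Everything else is bookkeeping of Hölder norms for products.
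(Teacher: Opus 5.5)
Your argument is correct and is essentially the paper's proof. The paper writes $\Delta u_{[0],base} = \bar f_{[0]} + (\Delta-\Delta_{base})u_{[0],base}$ and notes that, on functions constant along the fibres, $\Delta-\Delta_{base}$ has $O(t^{1/2})$ coefficients, which absorb the $t^{-1/2}$ loss in \eqref{eq:du}; your substitution of the ODE for $u''$ is the same computation written out explicitly, and it only spares you from separately checking that the $u''$-coefficient $\tilde V/W-\lambda^{-1}$ is $O(t^{1/2})$.
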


\begin{proof}
  Since we have the estimate \eqref{eq:du}, this estimate boils down
  to estimating the coefficients of $\Delta - \Delta_{base}$ acting on
  functions that are constant along the fibres.
\end{proof}

Let $\chi_i$ be a partition of unity of $\gamma$ subordinate to
neighbourhoods $U_i$ with size $\sim 1$ and centres $s_i \in
[0,\ell]$. We also view $\chi_i$ as a partition of unity on
$L$. Define
\[
  f_i = \chi_i \tilde f_{[0]}.
\]
Using the local trivialisation induced from the horizontal lifting,
$f_i$ is identified with a function on the cylinder
$\mathbb{R} \times L_{s_i}$.

\begin{lemma}\label{lemma:small_fiberwise}
\[
  \big\|\int_{L_{s_i}} f_i(s, \cdot)\big\|_{C^{k,\alpha}(\mathbb{R}\times L_{s_i})} \le Ct^{\frac{1}{2}}\|f_i\|_{C^{k,\alpha}(L)}.
\]
\end{lemma}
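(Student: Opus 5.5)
The bound will come from the exact vanishing of the weighted fibrewise average of $\tilde f_{[0]}$, with $\lambda = 1+O(t^{1/2})$ converting the weight $\lambda^{1/2}$ into the unweighted measure on $L_{s_i}$. Two facts will be used, both from Lemma~\ref{lemma:horizontal} and Proposition~\ref{prop:cylindrical}. First, $\lambda = a-|\beta|_h^2 = 1+O(t^{1/2})$ in $C^{k,\alpha}(L)$. Second, in the trivialisation $\Phi$ near $s_i$, the fibre measures $\mathrm{vol}_{h_s}$ and $\mathrm{vol}_{h_{s_i}}$ on $L_{s_i}$ differ by a density $\rho_s = 1+O(t^{1/2})$ in $C^{k,\alpha}$, uniformly for $s\in U_i$, since $\|h_s-h_{s_i}\|_{C^{k,\alpha}}\le Ct^{1/2}$ there.

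Since $\chi_i$ depends only on $s$, the identity $\int_{L_s}\lambda^{1/2}\tilde f_{[0]}=0$ gives, for each $s$,
\[
\int_{L_s}\lambda^{\frac12} f_i = \chi_i(s)\int_{L_s}\lambda^{\frac12}\tilde f_{[0]} = 0 .
\]
I interpret $\int_{L_{s_i}} f_i(s,\cdot)$ as integration with respect to the model fibre metric $h_{s_i}$ on the cylinder $\mathbb{R}\times L_{s_i}$. Then
\[
\int_{L_{s_i}} f_i(s,\cdot)\,\mathrm{vol}_{h_{s_i}} = \int_{L_s} f_i\,\rho_s^{-1} = \int_{L_s} f_i\,\bigl(\rho_s^{-1}-\lambda^{\frac12}\bigr),
\]
where the last step subtracts the vanishing weighted integral. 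The factor $\rho_s^{-1}-\lambda^{1/2}$ is $O(t^{1/2})$ in $C^{k,\alpha}$. Hence the $C^0$ bound $Ct^{1/2}\|f_i\|_{C^0}$ follows at once, using that $\mathrm{vol}(L_s)$ is bounded independently of $t$ by the normalisation.

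For the $C^{k,\alpha}$ bound in $s$, I differentiate under the integral sign in the cylinder coordinates, where the domain $L_{s_i}$ is fixed. Each $\partial_s^j$ falls either on $f_i$ or on the coefficient $\rho_s^{-1}-\lambda^{1/2}$. By the Leibniz rule, every resulting term still contains a derivative of order at most $k$ of that coefficient, and these are $O(t^{1/2})$. The other factor is at most $\|f_i\|_{C^{k,\alpha}}$. The Hölder seminorm in $s$ is handled the same way: apply the product estimate to the difference quotient of the product and integrate over the fixed fibre.

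Here $\|f_i\|_{C^{k,\alpha}(L)}$ is measured using the metric $g_L$, whereas the differentiation is done with the flow coordinates. The step I expect to need the most care is checking that these norms are uniformly equivalent. This should follow from Lemma~\ref{lemma:horizontal}: the flow vector field is $\partial_s$ with $a=1+O(t^{1/2})$ and $\beta=O(t^{1/2})$, and the fibre metrics are uniformly comparable to the fixed metric $h_{s_i}$ on an $O(1)$ neighbourhood. The constant $C$ is then independent of $i$ and $t$.
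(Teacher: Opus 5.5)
Your argument is correct, but it is organised differently from the paper's proof.

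\textbf{The paper's route.} It works at the centre of the patch. It asserts
\[
\|\Phi_{s,s_i}^*f_i - f_i(s_i)\|_{C^{k,\alpha}}\le Ct^{1/2}\|f_i\|_{C^{k,\alpha}(L)}
\]
on $\operatorname{supp}\chi_i$. It then writes
\[
\int_{L_{s_i}} f_i(s,\cdot) = \int_{L_{s_i}}\bigl(\Phi_{s,s_i}^*f_i - f_i(s_i) + (1-\lambda^{1/2})f_i(s_i)\bigr),
\]
which uses the vanishing of the weighted integral only on the single fibre $L_{s_i}$, and bounds each term.

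\textbf{Your route.} You use the vanishing of $\int_{L_s}\lambda^{1/2}f_i$ on every fibre $L_s$. At the same value of $s$, you compare the cylinder measure $\mathrm{vol}_{h_{s_i}}$ with the true fibre measure $\mathrm{vol}_{h_s}$ through the density $\rho_s$. All the smallness then sits in the coefficient $\rho_s^{-1}-\lambda^{1/2}$, or $1-\rho_s\lambda^{1/2}$ once transported to the fixed fibre $L_{s_i}$.

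\textbf{What each approach needs.} Your route only needs slow variation of the geometry: $\lambda$ and $h_s$ change by $O(t^{1/2})$ over $s$-intervals of length $O(1)$. This is exactly what Lemma~\ref{lemma:horizontal} and Proposition~\ref{prop:cylindrical} provide, and nothing is required of $f_i$ itself. The paper's first estimate, by contrast, in effect requires $f_i$ to be nearly independent of $s$ on a patch of length $\sim 1$. That does not follow from a $C^{k,\alpha}$ bound on $f_i$. For example, take a function with vanishing weighted fibrewise integrals that oscillates in $s$ at unit frequency: then $\Phi_{s,s_i}^*f_i - f_i(s_i)$ is of size $\|f_i\|$, not $t^{1/2}\|f_i\|$.

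So your version is the more robust one. It also makes the higher-order estimates transparent, via the Leibniz rule on the fixed fibre. The norm-equivalence point you flag is the only extra bookkeeping required, and it follows from Lemma~\ref{lemma:horizontal} as you say.
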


\begin{proof}
  Let $\Phi_{s,s_i}: L_{s_i} \to L_{s}$ be the local trivialisation
  induced by the horizontal lifting. For $s \in \gamma$ in the support
  of $\chi_i$, we have
  \[
    \|\Phi_{s,s_i}^*f_i - f_i(s_i)\|_{C^{k,\alpha}(L)} \le Ct^{\frac{1}{2}}\|f_i\|_{C^{k,\alpha}(L)}.
  \]
  Also, since $\int_{L_s} \lambda^{\frac{1}{2}}f_i(s) = 0$, it follows
  that
  \[
    \big|\int_{L_{s_i}} f_i(s,\cdot)\big| =
    \big|\int_{L_{s_i}} (\Phi_{s,s_i}^*f_i - f_i(s_i) + (1-\lambda^{\frac{1}{2}})f_i(s_i))\big|
    \le Ct^{\frac{1}{2}}\|f_i\|_{C^{k,\alpha}(L)},
  \]
  and higher order estimates follow similarly.
\end{proof}

Let
\[
  \tilde f_i = f_i - \frac{1}{V(s)}\int_{L_s} f_i,
\]
where $V(s)$ Then we have $\int_{L_s} \tilde f_i = 0$. We apply the
following general result of spectral decomposition, which can for
example be found in \cite{Li}.

\begin{lemma}\label{lemma:cylinder}
  Let \(M\) be a compact Riemannian manifold, and let
  \(X=\mathbb{R}\times M\) be equipped with the product metric
  \(ds^2+g_M\). Define
  \[
    C^{k,\alpha,ave}(X)
    =
    \left\{
      f\in C^{k,\alpha}(X)
      \;\middle|\;
      \int_M f(s,\cdot)=0 \text{ for all } s\in\mathbb{R}
    \right\}.
  \]
  Then the Laplacian
  \[
    \Delta:C^{k+2,\alpha,ave}(X)\to C^{k,\alpha,ave}(X)
  \]
  is invertible with bounded inverse.

  Moreover, let \(0<m<\sqrt{\lambda_1(M)}\), where \(\lambda_1(M)\) is
  the first positive eigenvalue of \(\Delta_M\). If
  \(f\in C^{k,\alpha,ave}(X)\) is supported in \(\{|s|<B\}\)
  for some \(B>0\), and \(u=\Delta^{-1}f\), then
  \[
    \|e^{m(|s|-B)}u\|_{C^{k+2,\alpha}(X)}
    \le C(M,k,\alpha,m)\|f\|_{C^{k,\alpha}(X)}.
  \]
\end{lemma}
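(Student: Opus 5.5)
We prove Lemma~\ref{lemma:cylinder} by separating variables in the fibre direction: we expand in eigenfunctions of $\Delta_M$ and solve one decoupled ODE in $s$ per mode. Since $\int_M f(s,\cdot)=0$, the zero mode is absent. Write $f(s,x)=\sum_{j\ge1} f_j(s)\phi_j(x)$, where $\Delta_M\phi_j=\lambda_j\phi_j$ with $\lambda_j\ge\lambda_1>0$ and the $\phi_j$ are $L^2$-orthonormal. The equation $\Delta u=f$ (with $\Delta=-\partial_s^2+\Delta_M$) becomes $-u_j''+\lambda_j u_j=f_j$ for each $j$. Its unique bounded solution is given by convolution with the Green's function
\[
  G_j(s)=\frac{1}{2\sqrt{\lambda_j}}e^{-\sqrt{\lambda_j}|s|}.
\]
Equivalently, $u=\int_{\mathbb R}\mathcal G(s-s')f(s',\cdot)\,ds'$, where the operator-valued kernel is
\[
  \mathcal G(s)=\tfrac12\Delta_M^{-1/2}e^{-|s|\Delta_M^{1/2}},
\]
acting on the $L^2$-orthogonal complement of constants. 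Uniqueness holds because any bounded harmonic $u$ with zero fibre averages has modes $u_j$ solving $-u_j''+\lambda_j u_j=0$, and every nonzero solution of this ODE grows exponentially.

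The key steps are as follows.
\begin{itemize}
  \item[(i)] \emph{$L^2$-type bounds uniform on unit slabs.} Because $\|e^{-|s|\Delta_M^{1/2}}\|_{L^2\to L^2}\le e^{-\sqrt{\lambda_1}|s|}$ on the complement of constants, we get
  \[
    \|u\|_{L^2([a,a+1]\times M)}\le C\sum_{n\in\mathbb Z}e^{-\sqrt{\lambda_1}|n|}\sup_{b}\|f\|_{L^2([b,b+1]\times M)}\le C\|f\|_{C^0}.
  \]
  \item[(ii)] \emph{Upgrade to H\"older norms.} Apply interior Schauder estimates on unit balls, $\|u\|_{C^{k+2,\alpha}(B_1)}\le C(\|f\|_{C^{k,\alpha}(B_2)}+\|u\|_{L^2(B_2)})$. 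The product metric is translation invariant in $s$, so the constants are uniform, and we obtain $\|u\|_{C^{k+2,\alpha}}\le C\|f\|_{C^{k,\alpha}}$. This gives boundedness of the inverse.
  \item[(iii)] \emph{Surjectivity onto $C^{k+2,\alpha,ave}$.} The solution $u$ lies in $C^{k+2,\alpha,ave}$ because $\mathcal G$ preserves the orthogonal complement of constants fibrewise. Together with uniqueness, this shows $\Delta$ is invertible between these spaces.
  \item[(iv)] \emph{Exponential decay.} Suppose $f$ is supported in $\{|s|<B\}$ and take $|s|\ge B$. Then $\mathcal G(s-s')$ is only evaluated at $|s-s'|\ge|s|-B$, so
  \[
    \|u\|_{L^2([a,a+1]\times M)}\le Ce^{-\sqrt{\lambda_1}(|a|-B)}\|f\|_{C^0}.
  \]
  Since $m<\sqrt{\lambda_1}$, the weight $e^{m(|s|-B)}$ is absorbed with room to spare. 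For $|s|\le B$ the weight is at most $1$, so the unweighted bound from (i) suffices there. Applying the Schauder estimate of (ii) on unit balls in the region $|s|>B+1$, where $f=0$, gives the weighted $C^{k+2,\alpha}$ bound. Near $|s|\approx B$ we use the inhomogeneous Schauder estimate, on which the weight is bounded.
\end{itemize}

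The only real subtlety is passing from the spectral ($L^2$) representation to H\"older estimates with constants independent of the noncompact length. Uniform local Schauder theory on unit slabs, combined with the exponentially decaying slab-to-slab $L^2$ bounds, handles this. The requirement $m<\sqrt{\lambda_1}$ is what makes the sum over slabs converge when the weight is inserted. The constant therefore depends on $M,k,\alpha$ and on the gap $\sqrt{\lambda_1}-m$.
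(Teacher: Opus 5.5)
Your proof is correct. It starts where the paper's does: expansion in eigenfunctions of $\Delta_M$ and the Green's function $\frac{1}{2\sqrt{\lambda_j}}e^{-\sqrt{\lambda_j}|s-t|}$ for each mode. You also supply the injectivity argument, which the paper leaves implicit.

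The real difference is how you get from the modal solution to H\"older bounds. The paper asserts that $f_j\mapsto u_j$ is bounded on H\"older spaces uniformly in $j$ and then sums over $j$. That step does not by itself control $u$ on $X$: H\"older norms are not orthogonal across modes, and $\|\phi_j\|_{C^{k+2,\alpha}(M)}$ grows with $j$, so summing modewise estimates loses derivatives. You instead sum in $L^2(M)$, where Parseval gives the honest bound $\|\mathcal G(s)\|_{L^2\to L^2}\le \frac{1}{2\sqrt{\lambda_1}}e^{-\sqrt{\lambda_1}|s|}$. This yields slab-wise $L^2$ control with exponential decay off the support of $f$, which you upgrade by interior Schauder estimates that are uniform by translation invariance in $s$.

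This is the rigorous version of what the paper sketches, and it gives slightly more. Your constant is independent of $B$, whereas the paper's route through $\int_{|t|<B}|f_j|$ introduces $B$-dependence despite the statement's $C(M,k,\alpha,m)$. Your pointwise-in-$s$ bound also already works for every $m\le\sqrt{\lambda_1}$, so no slab sum is needed in (iv).

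In a write-up, add a line checking that the spectrally defined $u$ is a distributional solution, so that elliptic regularity applies before you invoke Schauder. Also read the weighted norm as $\sup_p e^{m(|s(p)|-B)}\|u\|_{C^{k+2,\alpha}(B_1(p))}$, or smooth $|s|$ near $0$, since $e^{m(|s|-B)}$ has a kink at $s=0$. That version is exactly what your argument proves.
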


\begin{proof}
  For the reader's convenience, we include a proof here.

  Let $\{\phi_j\}_{j\ge 1}$ be an $L^2$-orthonormal basis of
  eigenfunctions of $\Delta_M$ on $C^{\infty,ave}(M)$, with
  \[
    \Delta_M \phi_j = \lambda_j \phi_j,
    \qquad
    0<\lambda_1\le \lambda_2\le \cdots .
  \]
  Since $f\in C^{k,\alpha,ave}(X)$, we may expand
  \[
    f(s,x)=\sum_{j=1}^\infty f_j(s)\phi_j(x).
  \]
  We seek a solution of the form
  \[
    u(s,x)=\sum_{j=1}^\infty u_j(s)\phi_j(x).
  \]
  Then \(\Delta u=f\) is equivalent to the family of ODEs
  \[
    -u_j''(s)+\lambda_j u_j(s)=f_j(s).
  \]

  For each $j\ge 1$, define
  \[
    u_j(s) =
    \frac{1}{2\sqrt{\lambda_j}}
    \int_{\mathbb{R}} e^{-\sqrt{\lambda_j}|s-t|} f_j(t)\,dt .
  \]
  A direct computation shows that $u_j$ solves
  \[
    -u_j''+\lambda_j u_j=f_j.
  \]
  Since $\lambda_j\ge \lambda_1>0$, the kernel
  \[
    K_j(s,t)=\frac{1}{2\sqrt{\lambda_j}}e^{-\sqrt{\lambda_j}|s-t|}
  \]
  is uniformly integrable in $j$, so the operator
  $f_j\mapsto u_j$ is bounded on H\"older spaces, uniformly in $j$.
  Summing over $j$ gives a bounded inverse
  \[
    \Delta^{-1}: C^{k,\alpha,ave}(X)\to C^{k+2,\alpha,ave}(X).
  \]

  Now assume that $f$ is supported in $\{|s|<B\}$. Fix
  $0<m<\sqrt{\lambda_1}$. Since $\lambda_j\ge \lambda_1$, for
  $|t|\le B$ we have
  \[
    e^{m(|s|-B)}e^{-\sqrt{\lambda_j}|s-t|}
    \le
    e^{m(|s|-B)}e^{-\sqrt{\lambda_1}(|s|-|t|)}
    \le
    e^{-(\sqrt{\lambda_1}-m)(|s|-B)} \le 1 .
  \]
  Therefore
  \[
    e^{m(|s|-B)}|u_j(s)|
    \le
    \frac{1}{2\sqrt{\lambda_j}}
    \int_{|t|<B} |f_j(t)|\,dt
    \le
    C(\lambda_1,m,B)\|f_j\|_{C^0(\mathbb{R})}.
  \]
  The same estimate applies to $u_j'$ and $u_j''$, since
  differentiating the kernel only introduces factors of
  $\sqrt{\lambda_j}$. Summing over $j$ and applying standard Schauder
  estimates on the product cylinder yields
  \[
    \|e^{m(|s|-B)}u\|_{C^{k+2,\alpha}(X)}
    \le
    C(M,k,\alpha,m)\|f\|_{C^{k,\alpha}(X)}.
  \]
  This proves the weighted estimate.
\end{proof}

By Lemma~\ref{lemma:cylinder}, there exists a unique
$u_i \in C^{k+2,\alpha}(\mathbb{R} \times L_{s_i})$ with
$\int_{L_{s_i}} u_i = 0$ such that
$\Delta_{\mathbb{R}\times L_{s_i}} u_i = \tilde f_i$ with
$\|u_i\|_{C^{k+2,\alpha}} \le C\|f_i\|_{C^{k,\alpha}}$, where $C>0$
depends on $L$.

To transplant these $u_i$ back to $L$, we consider a set of cutoff
functions $\tilde\chi_i$ on $\gamma$ such that $\tilde\chi_i = 1$ on
the support of $\chi_i$ and the support of $\tilde\chi_i$ is of size
$\Lambda \sim |\log t| \ll t^{-1/2}$.

Define \[
  u_{[0],fib} = \sum_i \tilde\chi_i u_i.
\]

\begin{lemma}\label{lemma:fiberwise}
  For $s \in [0,\ell]$, we have
  \[
    \|\Delta u_{[0],fib} - \tilde f_{[0]}\|_{C^{k,\alpha}(L)}
    \le Ct^{\frac{1}{2}}\|\tilde f_{[0]}\|_{C^{k,\alpha}(L)}.
  \]
\end{lemma}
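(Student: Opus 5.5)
We write $\Delta u_{[0],fib}-\tilde f_{[0]}$ as a sum of local error terms, one for each $i$, and bound each by $O(t^{1/2})$ times a local norm of $f_i$. Since $\sum_i\chi_i=1$ and $\sum_i f_i=\tilde f_{[0]}$, we have
\[
  \Delta u_{[0],fib}-\tilde f_{[0]} = \sum_i \Big(\Delta(\tilde\chi_i u_i) - f_i\Big).
\]
For each $i$ we decompose
\[
  \Delta(\tilde\chi_i u_i) - f_i
  = \tilde\chi_i\big(\Delta - \Delta_{\mathbb{R}\times L_{s_i}}\big)u_i
  + \big[\Delta,\tilde\chi_i\big]u_i
  + \big(\tilde f_i - f_i\big),
\]
using $\tilde\chi_i\tilde f_i=\tilde f_i$, which holds because $\tilde\chi_i=1$ on $\operatorname{supp}\chi_i$. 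We then estimate the three terms separately.

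\textbf{Third term.} By definition, $\tilde f_i-f_i=-V(s)^{-1}\int_{L_s}f_i$. Lemma~\ref{lemma:small_fiberwise}, combined with the comparison of $\int_{L_s}$ and $\int_{L_{s_i}}$ through the trivialisation $\Phi_{s,s_i}$ (the metrics differ by $O(t^{1/2})$), bounds it by $Ct^{1/2}\|f_i\|_{C^{k,\alpha}}$. It is supported in $\operatorname{supp}\chi_i$.

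\textbf{Second term.} The commutator $[\Delta,\tilde\chi_i]u_i$ involves only $\tilde\chi_i'$, $\tilde\chi_i''$ and $\partial_s u_i$. We choose $\tilde\chi_i$ to transition over an interval of length $\sim\Lambda$, at distance $\gtrsim\Lambda/2$ from $\operatorname{supp}\chi_i$. The weighted decay in Lemma~\ref{lemma:cylinder}, with a fixed $m<\sqrt{\lambda_1(L_{s_i})}$, then gives a bound $Ce^{-m\Lambda/2}\|f_i\|_{C^{k,\alpha}}$. Taking $\Lambda=K|\log t|$ with $K$ large makes this $O(t^{1/2})\|f_i\|$; the cutoff derivatives $O(\Lambda^{-1})$ only help. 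The spectral gaps $\lambda_1(L_{s})$ must be bounded below uniformly. This holds because the fibres $L_s$ with metrics $\tilde h_s$ form a smooth family parametrised by the compact circle $\gamma$, which is independent of $t$ up to rescaling of $s$. The constants in Lemma~\ref{lemma:cylinder} are uniform for the same reason.

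\textbf{First term.} This is the main obstacle: the model cylinder metric $ds^2+h_{s_i}$ must approximate $g_L$ on the whole support of $\tilde\chi_i$, which has length $\Lambda\sim|\log t|$ rather than $O(1)$. By Lemma~\ref{lemma:horizontal}, $a-1$, $\beta$ and $h_s-\tilde h_s$ are $O(t^{1/2})$. By Proposition~\ref{prop:cylindrical}, $\partial_s\tilde h_s=O(t^{1/2})$ in arc length $s$, so $\|h_s-h_{s_i}\|\le C(t^{1/2}+t^{1/2}|s-s_i|)$. This gives a coefficient error of size $Ct^{1/2}\Lambda$ on $\operatorname{supp}\tilde\chi_i$, which loses a logarithm. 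To recover the clean $t^{1/2}$, we weight by the exponential decay: write
\[
  |s-s_i|\,|\nabla^j u_i| \le |s-s_i|\,e^{-m(|s-s_i|-B)}\,\big\|e^{m(|s-s_i|-B)}u_i\big\|_{C^{k+2,\alpha}},
\]
and note that $\sup_s |s-s_i|e^{-m(|s-s_i|-B)}$ is bounded independently of $t$. Applying Lemma~\ref{lemma:cylinder} once more, the first term is bounded by $Ct^{1/2}e^{-m'|s-s_i|}\|f_i\|_{C^{k,\alpha}}$ for some $0<m'<m$.

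\textbf{Summation.} Each local error is bounded by $Ct^{1/2}e^{-m'|s-s_i|}\|f_i\|_{C^{k,\alpha}}$. Since the $U_i$ have size $\sim1$ with bounded overlap, at any point of $L$ the sum over $i$ is dominated by a convergent geometric series, so its $C^{k,\alpha}$ norm is bounded uniformly in $t$. Finally, $\|f_i\|\le C\|\tilde f_{[0]}\|$ because the derivatives of $\chi_i$ are uniformly bounded. This gives the claimed estimate $\|\Delta u_{[0],fib}-\tilde f_{[0]}\|_{C^{k,\alpha}(L)}\le Ct^{1/2}\|\tilde f_{[0]}\|_{C^{k,\alpha}(L)}$.
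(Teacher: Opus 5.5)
Your proposal is correct and follows the same route as the paper. The residual splits into three pieces: the metric-deviation term; the cutoff error, controlled by the exponential decay in Lemma~\ref{lemma:cylinder} with $\Lambda\sim|\log t|$; and the fibrewise-average correction $\tilde f_i-f_i$, which is the third contribution the paper announces but does not spell out, bounded via Lemma~\ref{lemma:small_fiberwise}. Beyond the paper, you use the exponential weight to absorb the $O(t^{1/2}|s-s_i|)$ growth of $h_s-h_{s_i}$ over the length-$\Lambda$ support of $\tilde\chi_i$, and you sum over the roughly $\Lambda$ overlapping cutoffs as a geometric series; both are details the paper skips when it simply asserts that the metric deviation is $O(t^{1/2})$.
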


\begin{proof}
  There are three contributions to the residual: (1) the metric deviation and (2) the cutoff
  error. Since
  $\|u_i\|_{C^{k+2,\alpha}(L)} \le C\|\tilde f\|_{C^{k,\alpha}(L)}$
  and the metric deviation is $O(t^{1/2})$, we have
  \[
    \|(\Delta - \Delta_{\mathbb{R}\times L_{s_i}})u_i\|_{C^{k,\alpha}} \le
    Ct^{\frac{1}{2}}\|\tilde f_i\|_{C^{k,\alpha}(L)}.
  \]
  We now turn to the cutoff error. The key is to notice that on the
  support of $d\tilde\chi_i$, we have
  \[
    \|u_i\|_{C^{k+2,\alpha}(L)} \le Ce^{-m\Lambda}\|\tilde f_i\|_{C^{k,\alpha}(L)}.
  \]
  If we choose $\Lambda \sim |\log t|$ so that
  $\exp(-m\Lambda) \le t^{1/2}$, then we satisfy the desired estimate.
\end{proof}

We can now write down our parametrix:
\[
  u_{[0]}  =  u_{[0],base} + u_{[0],fib}.
\]

Define
\[
  f_{[1]} = f_{[0]} - \Delta u_{[0]}
\]
to be the residual. Then $f_{[1]}$ satisfies
\[
  \|f_{[1]}\|_{C^{k,\alpha}(L)} &\le
  \|\bar f_{[0]} - \Delta u_{[0],base}\|_{C^{k,\alpha}(L)}
  + \|\tilde f_{[0]}-\Delta u_{[0],fib}\|_{C^{k,\alpha}(L)} \\
  &\le Ct^{\frac{1}{2}}\|f_{[0]}\|_{C^{k,\alpha}(L)}
\]
by Lemma~\ref{lemma:fiberwise}. It should be noted that unlike
$f_{[0]}$, $f_{[1]}$ does not necessarily have small fibrewise average
compared to the size of itself. However, we can still iterate the
construction of the parametrix and obtain
\[
  u_{[n]} = u_{[n],base} + u_{[n],fib}
\]
and
\[
  f_{[n+1]} = f_{[n]} - \Delta u_{[n]}.  
\]
with
\[
  \|du_{[n],base}\| &\le Ct^{-\frac{1}{2}}\|f_{[n]}\|_{C^{k,\alpha}(L)}, \\
  \|u_{[n],fib}\| &\le C\|f_{[n]}\|_{C^{k,\alpha}(L)}, \\
  \|f_{[n+1]}\|_{C^{k,\alpha}(L)} &\le Ct^{\frac{1}{2}}\|f_{[n]}\|_{C^{k,\alpha}(L)}
\]
for $n \ge 1$. The iteration is contractive once $t>0$ is sufficiently
small.

Finally, define
\[
  u_{base} &= \sum_{n=0}^\infty u_{[n],base}, \\
  u_{fib} &= \sum_{n=0}^\infty u_{[n],fib}, \\
  u &= u_{base} + u_{fib}.
\]
Then $u$ satisfies the required properties. The crucial point is that,
since the original error $f$ has fibrewise average of size
\[
  O(t^{1/2}\|f\|_{C^{k,\alpha}(L)}),
\]
we have
\[
  u_{[0],base},\,u_{[1],base}=O(\|f\|_{C^{k,\alpha}(L)})
\]
and
\[
  u_{[n],base}&=O\bigl(t^{\frac{n-1}{2}}\|f\|_{C^{k,\alpha}(L)}\bigr)
\]
for $n \ge 2$. This completes the proof of
Proposition~\ref{prop:parametrix}.

\section{Generic smoothness of certain fibrations}
\label{sec:k3}
It is a well-known consequence of Bertini's theorem that a general hyperplane section of  smooth projective variety $X$ is again a smooth projective variety.  It is not, however, the case that if $X$ is equipped with a smooth fibration $X\to S$, then a general hyperplane section of $X$ is equipped with a smooth fibration to $S$.  Nevertheless, one expects that for a general hyperplane section $H$ of $X$, the fibres of $H \to S$ have mild singularities.  The purpose of this section is to record a result which is certainly well-known to experts, namely, that $H$ can be chosen so that the fibres of $H \to S$ are either smooth or have ordinary double points (ODPs).  In our current setting we are also interested in ensuring that our hyperplane section respects some additional structure (namely the real structure on our varieties) and this presents some additional minor complications.

We say that a variety $X$ defined over $\mathbb C$ has at worst a single ODP if either $X$ is smooth, or ${\rm Sing}(X) = \{x\}$ is a single point and that point is an ODP, i.e., local analytically isomorphic to a 
singularity of the form $\{x_1^2+\dots+x_n^2 = 0\} \subset \mathbb C^n$.

\begin{prop}
\label{prop_lefschetz}
Let $p\colon X \to Y$ be a smooth morphism with connected fibres between complex projective manifolds.
Let $\eta$ be a line bundle on $X$ and suppose that the following hold:
\begin{enumerate}
    \item[(a)] for all $y \in Y$, there exists a very ample line bundle $M_y$ on $X_y =p^{-1}(y)$ and a positive integer $d \ge 2$ such that $\eta|_{X_y} \sim M_y^{\otimes d}$; and

    \item[(b)] 
    $p_*\eta$ is globally generated and $H^i(X_y, \eta|_{X_y}) = 0$ for all $i>0$ and $y \in Y$.
\end{enumerate}

Then, there exists a Zariski open subset $U \subset H^0(X, \eta)$ such that for any $s \in U$ the following hold:
\begin{enumerate}
    \item $W=\{s = 0\}$ is smooth; and 
    \item there exists a Zariski open subset $V \subset Y$ such that ${\rm codim}_Y(Y\setminus V) \ge 2$ and for all $y \in V$ the fibre of $p|_{W}\colon W \to Y$ over $y$ has at worst a single ODP, in particular is a Lefschetz fibration (cf. Def. \ref{def:lefschetz-fibration}). 
\end{enumerate}

Suppose moreover that the following hold:
\begin{enumerate}
    \item[(c)] $X, Y$ and $\eta$ can be defined over $\mathbb R$
    and the projection $p\colon X \to Y$ is defined over $\mathbb R$ (in particular, the morphism $p\colon X \to Y$ commutes with complex conjugation); 
    \item[(d)] there exists $s_0 \in H^0(X, \eta)(\mathbb R)$ such that if $W_0=\{s_0 = 0\}$, then the discriminant locus of $p|_{W_0}\colon W_0 \to Y$ is disjoint from $Y(\mathbb R)$; and

    \item[(e)] 
    $W_0$ is smooth and $W_0(\mathbb R) \neq \emptyset$.
\end{enumerate}

Then, we may choose $s \in U$ as above such that the following additional properties hold:

\begin{itemize}
    \item[(3)] $W$ is defined over $\mathbb R$ and $p|_{W}\colon W \to Y$ is defined over $\mathbb R$ (in particular, it commutes with complex conjugation);
    \item[(4)] $W(\mathbb R)$ is non-empty; and 
    \item[(5)] the discriminant locus of $W \to Y$ is disjoint from $Y(\mathbb R)$.
\end{itemize}
    
\end{prop}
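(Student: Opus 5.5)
The plan is a standard incidence-variety dimension count, using (a) and (b) to show that the relevant conditions on sections are independent. For the real statement I will then perturb $s_0$ within the real points.

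\emph{Step 1: reduction to fibrewise jets.} By (b) and cohomology and base change, $p_*\eta$ is locally free, its formation commutes with base change, and $H^0(X,\eta)=H^0(Y,p_*\eta)$. Global generation of $p_*\eta$ then makes the restriction map $H^0(X,\eta)\to H^0(X_y,\eta|_{X_y})$ surjective for every $y$. By (a), $\eta|_{X_y}\cong M_y^{\otimes d}$ with $M_y$ very ample and $d\ge 2$. I will use the standard fact that such a line bundle separates $2$-jets at a point and separates $1$-jets at any two distinct points. Combined with surjectivity, this shows that $H^0(X,\eta)$ surjects onto fibrewise $2$-jets at any $x\in X$, and onto pairs of fibrewise $1$-jets at two distinct points of one fibre. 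In particular $\eta$ is globally generated, so Bertini gives (1) on a Zariski open set.

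\emph{Step 2: dimension counts.} Put $P=\P(H^0(X,\eta))$, $f=\dim X_y$ and $m=\dim Y$, so $\dim X=f+m$. Let
\[
Z=\{(s,x)\in P\times X : s(x)=0,\ d(s|_{X_{p(x)}})(x)=0\}.
\]
By Step 1, the fibre of $Z$ over each $x$ is a linear subspace of codimension $f+1$. Hence $Z$ is irreducible of dimension $\dim P+m-1$.

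Next let $Z_1\subset Z$ be the locus where the fibrewise Hessian of $s$ at $x$ is degenerate. By $2$-jet surjectivity this is one further independent condition, so $\dim Z_1\le \dim P+m-2$.

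Let $Z_2$ be the closure of the set of triples $(s,x,x')$ with $x\neq x'$ in the same fibre and both points in $Z$. Pairs of points in a common fibre form a variety of dimension $m+2f$, and by two-point $1$-jet separation we impose $2(f+1)$ independent conditions. Hence $\dim Z_2\le \dim P+m-2$.

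Projecting $Z_1$ and $Z_2$ to $P$: for $s$ in a Zariski open $U$, the fibres over $s$ have dimension $\le m-2$. Their images in $Y$ therefore form a closed set of codimension $\ge 2$, and $V$ is its complement. Over $V$ each fibre of $W\to Y$ has at worst one singular point, and that point has a nondegenerate Hessian, i.e.\ is an ODP. By the holomorphic Morse lemma this gives the Lefschetz local form, which is (2). One point needs a short check: a point of $Z$ with $s(x)=0$ and vertical differential zero is a fibre singularity of $W$ precisely because $W$ is smooth, so $dp|_W$ degenerates there.

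\emph{Step 3: the real case.} Under (c), complex conjugation acts on $H^0(X,\eta)$, and the real points $H^0(X,\eta)(\R)$ form a real form. They are therefore Zariski dense, and $U\cap H^0(X,\eta)(\R)$ is dense in the Euclidean topology of the real vector space. Any real $s$ gives $W$ and $p|_W$ defined over $\R$, which is (3).

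It remains to show that (4) and (5) are Euclidean-open conditions near $s_0$, and then to pick $s\in U(\R)$ close to $s_0$. For (4): $W_0$ is smooth by (e), and it has a real point $x_0$. Since $W_0(\R)$ is a real-analytic submanifold of $X(\R)$ of codimension one, the implicit function theorem shows that real sections near $s_0$ still vanish at a real point near $x_0$.

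For (5): consider the closed set $Z'=\{(s,x)\in Z : p(x)\in Y(\R)\}$. Its projection to the $s$-factor is closed, because $X$ is compact. By (d) this projection misses $s_0$, so it misses a neighbourhood of $s_0$. Here I must also account for singular points of $W$ itself, since the discriminant locus includes images of singular points of $W$. This is handled by the same compactness argument applied to the closed condition ``$s(x)=0,\ ds(x)=0$''.

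I expect the main obstacle to be Step 1. The precise jet-separation claims (2-jets at one point, and 1-jets at two points in the same fibre, for $M^{\otimes d}$ with $d\ge2$) must be verified, and they must be combined with the base-change surjectivity so that all the codimension counts are exact. Everything after that is routine.
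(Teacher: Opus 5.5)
Your route differs from the paper's. The paper does not prove the fibrewise statement itself. It cites Katz \cite{Katz1973} (Cor.~3.2.4) for the fact that, inside $H^0(X_y,\eta|_{X_y})$, sections whose zero locus is worse than a single ODP form a set of codimension $\ge 2$. It then pulls this back along the surjective maps $\rho_y$ and counts dimensions in $H^0(X,\eta)\times Y$. Your incidence-variety count is meant to replace that citation. Several parts of your proposal are fine:
\begin{itemize}
\item the base-change surjectivity;
\item the counts for $Z$ and $Z_1$, which only need $2$-jets at one point, and $M^{\otimes 2}$ does separate those;
\item the real perturbation argument in Step~3. This is the paper's argument, and your compactness argument makes precise its ``the discriminant varies continuously''.
\end{itemize}

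The gap is the two-point claim in Step~1, which is false for $d=2$, a case allowed by (a).
\begin{itemize}
\item Already $\mathcal{O}_{\P^1}(2)=\mathcal{O}(1)^{\otimes 2}$ has only $3$ sections, so it cannot prescribe values and derivatives at two points ($4$ conditions).
\item More generally, for $M^{\otimes 2}$ the $2(f+1)$ conditions at $x\neq x'$ are dependent whenever $x,x'$ lie on a line contained in $X_y$ (in the embedding by $M$). This is because $M^{\otimes 2}$ restricts to $\mathcal{O}_{\P^1}(2)$ on that line.
\end{itemize}
This is not harmless for your bound on $Z_2$. Take $X=\P^1\times\P^f$ with $f\ge 2$ and $\eta=\mathcal{O}(k,2)$, $k\ge 0$, which satisfies (a) and (b) with $d=2$. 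A quadric singular at $x\neq x'$ is singular along the whole line $\overline{xx'}$, and that is only $2f+1$ conditions. Hence $\dim Z_2=\dim P+m-1$, not $\le \dim P+m-2$ as you claim.

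Conclusion (2) still holds in this example, but for a reason your count does not see. Corank-$2$ quadrics have codimension $3$, and each one contributes a $2$-dimensional family of pairs. So what you actually need to bound is the image of $Z_2$ in $P\times Y$, not $Z_2$ itself.

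As written, your argument is valid for $d\ge 3$. Cubics on $\P^r$ separate $1$-jets at two distinct points, so $M^{\otimes 3}$ does too, and this covers every example in Section~5 ($d=4$ and $d=6$). For $d=2$ you need an extra input, either of the following, followed by pulling back along $\rho_y$:
\begin{itemize}
\item Katz's codimension-$2$ result, as the paper uses.
\item The characteristic-zero reflexivity argument. A general hyperplane tangent to $X_y$ in the embedding by $\eta|_{X_y}$ is tangent at exactly one point with nondegenerate Hessian. So the bad hyperplanes form a proper closed subset of the dual variety, or else the dual variety already has codimension $\ge 2$.
\end{itemize}
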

\begin{proof}
We first prove the existence of a Zariski open subset $U$ as claimed.  To prove the existence of $U$ we may freely replace $Y$ by a general complete intersection curve $\Sigma \subset Y$ and so may assume that $\dim Y =1$.

Consider the subset
\begin{align*} 
U_0= \{(s, y) \in H^0(X, \eta) \times Y : \{s = 0\} \cap X_y ~\text{has at worst a single ODP} \} \\\subset H^0(X, \eta) \times Y.\end{align*}
We claim that $U_0$ is a Zariski open subset.  Recall that for any flat proper morphism $f\colon V \to T$ the set \[\{t \in T: V_t ~\text{has at worst a single ODP}\} \subset T\] is Zariski open (this follows, for instance, by examining the versal deformation space of the ODP, cf. 
\cite[Corollaire 1.3.4]{Deligne73} and also \cite[Proposition 3.2]{Katz1973}).  Applying this observation to the 
 universal family $\pi\colon \mathscr V \to  Y \times  H^0(X,  \eta)$ (i.e., the fibre of $\pi$ over $(y, s)$ is precisely
$X_y \cap \{s = 0\}$) shows that $U_0$ is open.
Let $Z = H^0(X, \eta) \times Y \setminus U_0$ and let $p_1\colon Z \to  H^0(X, \eta)$, $p_2\colon Z \to Y$ be the two projections.

For any $y \in Y$, since $\eta|_{X_y}$ is at least twice a very ample line bundle, we may apply \cite[Corollaire 3.2.4]{Katz1973}
to conclude that the set 
\[\{s \in H^0(X_y, \eta|_{X_y}) : \{s = 0\} ~\text{has at worst a single ODP} \} \subset H^0(X_y, \eta|_{X_y})\]
is a Zariski open subset and that its complement, which we denote $Z_y$, has codimension at least $2$.

We next observe that the restriction map 
    $\rho_y\colon H^0(X, \eta) \to H^0(X_y, \eta|_{X_y})$ is surjective for all $y \in Y$.
Indeed, since $p$ is flat $\chi(X_y, \eta_{X_y})$ is constant as a function of $y$ and since $H^i(X_y, \eta|_{X_y}) = 0$ by assumption it follows that $h^0(X_y, \eta|_{X_y})$ is
also a constant function of $y$. 
We may apply \cite[III Corollary 12.9]{Hartshorne1977} to see that $p_*\eta$ is a vector bundle and the fibre of $p_*\eta$ over $y$
is precisely 
$H^0(X_y, \eta|_{X_y})$.
Observe that $\rho_y$ is therefore precisely the composition
\[H^0(X, \eta) = H^0(Y, p_*\eta) \to (p_*\eta)_y= H^0(X_y, \eta|_{X_y})\]
and the middle arrow is surjective because $p_*\eta$ is globally generated, by assumption.

Since $\rho_y\colon H^0(Y, \eta) \to H^0(X_y, \eta|_{X_y})$ is surjective it follows that 
$\rho_y^{-1}(Z_y)$ is codimension at least two in $H^0(X, \eta)$.
Observe that by definition $p_2^{-1}(y) \subset \rho_y^{-1}(Z_y)$ and thus
$p_2^{-1}(y)$ is codimension at least two in $H^0(X, \eta)$.
It follows that \[\dim Z \leq \dim Y + H^0(X, \eta) - 2 = H^0(X, \eta) -1.\] Since $p_1$ is a proper morphism, $p_1(Z) \subset H^0(X, \eta)$ is a closed subset and by construction, for any $s \in H^0(X, \eta) \setminus p_1(Z)$, $\{s=0\}$ satisfies (2).  

By hypothesis (b) and the assumption that $\eta$ is $p$-very ample  we have that $\eta$ is base point free. By Bertini's theorem, perhaps replacing $H^0(X, \eta) \setminus p_1(Z)$ by a smaller Zariski open subset $U \subset H^0(X, \eta) \setminus p_1(Z)$ we may assume that (1) holds for all $s \in U$.  We have therefore found our desired open subset.

\medskip

We now show that under the additional hypotheses listed in the statement of the Proposition we can find $s \in U$ satisfying properties (3), (4) and (5).

Since the real points of $H^0(X, \eta)$
are Zariski dense, we see that set of real points has non-empty intersection with $U$ and so we may find $s_1 \in U$ such that $s_1$ is defined over $\mathbb R$.

Let $\epsilon$ be a real parameter. We will show for a sufficiently small choice of of $\epsilon$ that $s = s_0+\epsilon s_1$ satisfies conditions (1)-(5). 

For all but finitely many values of $\epsilon$, $s_0+\epsilon s_1 \in U$, and so if we let $W = \{s_0+\epsilon s_1 = 0\}$ then $W$ satisfies conditions (1) and (2).

Note also that $W$ is defined over $\mathbb R$, and hence the morphism $p|_W\colon W \to Y$ is defined over $\mathbb R$
and therefore commutes with complex conjugation.  This shows that (3) holds.


Since $W_0$ is smooth, it follows that $W_0(\mathbb R)$ is a real manifold with (real) dimension equal to the (complex) dimension of $W_0$, see \cite[Proposition 2.2.27]{MR4179588}.  For $\epsilon$ sufficiently small it follows that $W(\mathbb R)$ is also a real manifold with $\dim W_0(\mathbb R) = \dim W(\mathbb R)$, in particular, it is non-empty.

Finally, since the discriminant locus of $W_0 \to Y$ is disjoint from $Y(\mathbb R)$ and the discriminant locus of $\{s_0+\epsilon s_1 = 0\} \to Y$ varies continuously with $\epsilon$, we see that
by taking $\epsilon$ to be sufficiently small we can ensure that (5) holds as well.
\end{proof}

\begin{remark}
\label{remark:alternative-generic-odp-conditions}
Recall that if $X$ is a smooth variety defined over $\mathbb R$ and $X(\mathbb R) \neq \emptyset$, then $X(\mathbb R)$ is Zariski dense in $X$, cf. \cite[Theorem 2.2.9]{MR4179588}.  It follows that hypothesis (e) can also be replaced by either of the following alternate conditions.

\begin{enumerate}
    \item[(e')] For some $y \in Y(\mathbb R)$, $W_0 \cap X_y$ is smooth, $\dim W_0 \cap X_y \ge 1$
    and $W_0(\mathbb R) \cap X_y \neq \emptyset$.
\end{enumerate}

Or

\begin{enumerate}
    \item[(e'')] For some $y \in Y$, $\# (X_y \cap W_0(\mathbb R)) = +\infty$.
\end{enumerate}

  \medskip

\end{remark}

\begin{lemma}
    \label{lefschetz_lemma_branched_cover}
    Let $p\colon X \to Y$ be a smooth morphism with connected fibres between complex projective manifolds, suppose that $X, Y$ and $p$ can be defined over $\mathbb R$, let $\eta_0$ be a line bundle on $X$ which is defined over $\mathbb R$, let $\eta = \eta_0^{\otimes 2}$ and let $0 \neq s \in H^0(X, \eta)$ be a section such that $W = \{s = 0\}$ satisfies conditions (1)-(5) in Proposition \ref{prop_lefschetz}.

    Let $\sigma\colon \tilde X \to X$ be the double cover extracting a root of $s$ and which is totally ramified over $W$, see \cite[Definition 2.50]{KM98}, and let $\tilde p = p\circ \sigma$.
    Then $\tilde p\colon \tilde X \to Y$ satisfies conditions (1)-(5) in 
Proposition \ref{prop_lefschetz}.
\end{lemma}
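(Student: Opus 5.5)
The plan is to work locally with the explicit equation of the double cover, so that all five properties for $\tilde p$ reduce to the corresponding properties of $W=\{s=0\}$ and of $p|_W$, which hold by hypothesis.

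\textbf{Local model.} Realise $\tilde X$ as the subvariety $\{w^2=s\}$ of the total space of $\eta_0$, where $w$ is the tautological fibre coordinate. This is the cyclic cover of \cite[Definition 2.50]{KM98}, and $\sigma$ is the restriction of the bundle projection. Near a point $x\in X$, choose a trivialisation of $\eta_0$ (hence of $\eta=\eta_0^{\otimes 2}$) and write $s=f$ for a local holomorphic function $f$. Then $\tilde X$ is locally $\{w^2=f(x)\}\subset \mathbb{C}\times U$.
\begin{itemize}
\item Away from $w=0$, $\sigma$ is étale, so $\tilde X$ and its fibres are smooth there, because $X$ and the fibres $X_y$ are smooth ($p$ is a smooth morphism).
\item At a point with $w=0$ we have $f=0$. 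The Jacobian of $w^2-f$ is $(2w,-df)=(0,-df)$, which is nonzero because $W$ is smooth by (1). Hence $\tilde X$ is smooth, which gives (1) for $\tilde p$.
\end{itemize}

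\textbf{Fibres of $\tilde p$.} For $y\in Y$, the fibre is $\tilde X_y=\{w^2=f|_{X_y}\}$, a double cover of $X_y$ branched along $W_y=W\cap X_y$. By the étale argument above, its singular points lie over singular points of $W_y$, and they occur exactly there. For $y\in V$, either $W_y$ is smooth, and then $\tilde X_y$ is smooth; or $W_y$ has a single ODP at $x$. In the latter case we can choose local coordinates on $X_y$ with $f|_{X_y}=x_1^2+\dots+x_m^2$. Then $\tilde X_y$ is locally $\{w^2-x_1^2-\dots-x_m^2=0\}$, which after rescaling $w$ by $\sqrt{-1}$ is an ODP in one more variable. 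Since $w=0$ over the branch locus, this ODP has a unique preimage, so $\tilde X_y$ has at worst a single ODP.

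Connectedness of the fibres of $\tilde p$ follows because $X_y$ is connected and $W_y\neq\emptyset$. Indeed, $\eta|_{X_y}$ is a positive multiple of a very ample bundle by (a), so its divisor $W_y$ is nonempty in positive dimension, and a double cover of a connected variety with nonempty branch locus is connected. The same $V$ then works, giving (2). The local computation also shows that the discriminant locus of $\tilde p$ equals that of $p|_W$, which yields (5) directly.

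\textbf{Real structure.} Since $\eta_0$ and $s$ are defined over $\mathbb{R}$, the equation $w^2=s$ is defined over $\mathbb{R}$. Complex conjugation on the total space of $\eta_0$ preserves $\tilde X$ and commutes with $\tilde p$, giving (3). For (4), any real point $x\in W(\mathbb{R})$, which exists by (4) for $W$, lifts to the point $w=0$ over $x$, and this point is real. So $\tilde X(\mathbb{R})\neq\emptyset$.

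\textbf{Main obstacle.} The delicate step is the fibrewise ODP claim in (2). It requires the local coordinates realising the ODP of $W_y$ to be coordinates on the smooth ambient fibre $X_y$, with $W_y$ cut out by the single equation $f|_{X_y}$. This should follow from $W_y$ being a Cartier divisor in $X_y$ with an isolated ODP, by the holomorphic Morse lemma applied to $f|_{X_y}$. The rest of the argument is a routine check.
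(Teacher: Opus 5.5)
Your proposal is correct and follows the paper's proof. Both arguments pass to the local model $w^2=f$ and read off from it two facts: smoothness of $\tilde X$, and that $\tilde X_y$ has an ODP exactly where $W_y$ does. Both get (5) from the equality of the discriminant loci of $\tilde p$ and $p|_W$, and treat (3)--(4) as immediate from the real construction. You add detail the paper omits: the Jacobian and \'etale check, the Morse-lemma normalisation of coordinates, and connectedness of the fibres.

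One small slip: hypothesis (a) is not among the lemma's assumptions. You do not need it, because $W_y\neq\emptyset$ already follows from (2) for $W$, since a Lefschetz fibration is by definition surjective.
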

\begin{proof}
As in the proof of Proposition \ref{prop_lefschetz} we may freely reduce to the case where $\dim Y = 1$.

\medskip

  The fact that conditions (3) and (4) hold for $\tilde p\colon \tilde X \to Y$  is immediate from the construction given in \cite[Definition 2.50]{KM98}.
  
\medskip
   
   We now verify conditions (1) and (2) hold for $\tilde p\colon \tilde X \to Y$. This can be done locally on a small neighbourhood about  a point  $x \in U \subset X$ in which case we may freely assume that we have (holomorphic) coordinates $(z_1, \dots, z_n)$ on $U$ such that $p(z_1, \dots, z_n) = (z_2, \dots, z_n)$,  $W = \{f(z_1, \dots, z_n) = 0\}$
   and $\tilde U = \sigma^{-1}(U) \cong \{(w, z_1, \dots, z_n) : w^2 -f(z_1, \dots, z_n) = 0\}$.
From this local description it is immediate that $\tilde U$ is smooth, and that the fibre of $\tilde U \to Y$ has an ODP at $\sigma^{-1}(x)$  if and only if the fibre of $W \to Y$ has an ODP at $x$.

\medskip

The above local description also shows that the discriminant locus of $\tilde X \to Y$ is precisely the discriminant locus of $W \to Y$, and hence condition (5) holds.
\end{proof}

\section{Examples and applications to $G_2$ geometry}
\label{sec:examples}

In this section we construct examples for Theorem \ref{thm:main1}.
They are Calabi-Yau $3$-folds which are K3-fibred over $\P^1$ and have the property that their discriminant locus has no real points.
If some additional technical conditions are satisfied, then Proposition \ref{prop_lefschetz} can be applied to perturb the variety so that all fibre singularities are ODPs and Theorem \ref{thm:main1} can be applied.
The result is a Calabi-Yau $3$-fold with a nowhere vanishing harmonic $1$-form on its real locus.
The construction from \cite{JoyceKarigiannis} can then be applied, see Corollary \ref{cor:joyce-karigiannis-for-cy}, and we obtain a $G_2$-manifold and can compute its Betti numbers.

We find that several different constructions of Calabi-Yau manifolds can be used to produce the inputs for our construction and exhibit the following examples:
hypersurfaces in a product of projective spaces;
a complete intersection Calabi-Yau in a product of projective spaces;
a hypersurface in a scroll;
a branched double cover of $\P^1 \times \P^2$.

The Betti numbers listed in Corollary \ref{cor:main2} are precisely the ones obtained in the examples below.

\begin{ex}
    \label{example:1}
    On $\P^1 \times \P^3$ with coordinates $[s:t],[x_0:x_1:x_2:x_3]$ let 
    \begin{align}
    \label{equation:example1}
    f=(s^2+t^2) x_0^4
    -
    (s^2+2t^2) x_1^4
    +
    (s^2+3t^2) x_2^4
    -
    (s^2+5t^2) x_3^4,
    \quad
    X = Z(f).
    \end{align}
    \emph{Real structure and fibration:}
    Let $\sigma: \P^1 \times \P^3 \rightarrow \P^1 \times \P^3$ denote complex conjugation.
    Complex conjugation on $\P^1$ and $\sigma$ are intertwined by the projection.
    \emph{Line bundle:}
    Let $\eta=\mathcal{O}(2,4)$.
    Then $\eta|_{\P^3}=\mathcal{O}(4)=\mathcal{O}(1)^{\otimes 4}$, so (a) from Proposition \ref{prop_lefschetz} is satisfied.
    By K\"unneth, $H^0(\P^1 \times \P^3, \mathcal{O}(2,4)) \cong H^0(\P^1, \mathcal{O}(2)) \otimes H^0(\P^3, \mathcal{O}(4))$, and restriction to a fibre $\{[s:t]\} \times \P^3$ corresponds to evaluation at $[s:t]$ on the first component tensored with the identity on the second component, so is surjective.
    Hence, (b) from Proposition \ref{prop_lefschetz} is also satisfied.
    \emph{Non-empty real locus:}
    We have $([1:0],[1:1:0:0]) \in X$, so its real locus is non-empty.    
    \emph{Real discriminant locus:}
    Let $s,t \in \R$, then the coefficients of $x_0^4, x_1^4, x_2^4, x_3^4$ are non-zero, hence the fibre of $Z(f)$ over $[s:t]$ is from the well-known Fermat family which is smooth.
    \emph{Smooth total space:}
    denote the $s,t$-polynomial factors in front of $x_i$ by $q_i(s,t)$.
    At a singular point, we have for $i \in \{0,1,2,3\}$ that either $x_i=0$ or $q_i(s,t)=0$.
    But the four equations $q_i(s,t)=0$ are pairwise incompatible in $\P^1$, so at most one $x_i$ can be nonzero.
    Because $[x_0:x_1:x_2:x_3] \in \P^3$, exactly one must be nonzero.
    That would give a singular point of $q_i$, but it is a smooth quadric.

    Thus, the conditions (a)-(e) of Proposition \ref{prop_lefschetz} are satisfied, and a small perturbation of $f$ has the property that all singular fibres have exactly one ODP.
    Thus, we can apply Theorem \ref{thm:main1} and find a metric $\tilde{g}_t$ such that $L=\fix(\sigma)$ admits a nowhere vanishing $1$-form.
    Thus, Corollary \ref{cor:joyce-karigiannis-for-cy} applies and we obtain a $G_2$-manifold $N$ by resolving the singularities of $(S^1 \times X)/\langle (-1) \times \sigma \rangle$.

    \emph{Computation of $G_2$-topology:}
    By \cite{Green1987}, $b^2(X)=2$.
    Two generators of $H^2(X \times S^1)$ are the pullbacks of the Kähler forms from $\P^1$ and $\P^3$.
    On both, $\sigma$ acts as multiplication by $-1$, hence $\dim (H^{1,1}(X)^\sigma)=0$ and $\dim (H^{1,1}(X)^{-\sigma})=2$.
    We also have $h^{2,1}=86$ by \cite{Green1987}.

    We now compute the Betti numbers of $L=\fix(\sigma)$.
    The map 
    \begin{align*}
        \Psi: \RP^1 \times \RP^3 &\rightarrow \RP^1 \times \RP^3
        \\
        ([s:t],[x_0:x_1:x_2:x_3]) & \mapsto 
        ([s:t],[q_0^{1/4} x_0:q_1^{1/4} x_1:q_2^{1/4} x_2:q_3^{1/4} x_3]),
    \end{align*}
    where $q_i$ are the quadratic factors defined above, is well-defined and a diffeomorphism (for its inverse replace $q_i^{1/4}$ with $q_i^{-1/4}$).
    We have $\Psi(L)=\RP^1 \times \Sigma$ for $\Sigma=\{[y_0:y_1:y_2:y_3] \in \RP^3: y_0^4-y_1^4+y_2^4-y_3^4=0\}$.
    We prove $\Sigma \simeq T^2$ by considering $C=\{(u,v) \in \R^2:u^4+v^4=1\} \simeq S^1$.
    Then $\Phi: C \times C \rightarrow \Sigma$, $\Phi((u_0,u_2),(u_1,u_3))=[u_0:u_1:u_2:u_3]$ is surjective with fibres consisting of two points obtained by simultaneously multiplying all inputs by $-1$.
    Hence, $\Sigma \simeq (C \times C)/\{\pm 1\} \simeq (S^1 \times S^1)/\{\pm 1\}=T^2$.
    Altogether, $L \simeq T^3$, so $b^0(L)=2$, $b^1(L)=6$ and therefore for the resolved $G_2$-manifold $N$ we find
    $b^2(N)=0+2=2$, $b^3(N)=89+6=95$.
    A $G_2$-manifold with these Betti numbers is listed in \cite[Table 1]{KovalevLee}.
\end{ex}

\begin{ex}
    \label{example:cicy2}
    This example shows that one single CICY configuration can lead to different $G_2$-manifolds.
    The application of Theorem \ref{thm:main1} and Proposition \ref{prop_lefschetz} and Corollary \ref{cor:joyce-karigiannis-for-cy} are the same, so it remains to compute the Betti numbers of the resulting $G_2$-manifold $N$.
    We change \eqref{equation:example1} from Example \ref{example:1} by changing one minus sign into a plus sign:
    \begin{align}
    f=(s^2+t^2) x_0^4
    -
    (s^2+2t^2) x_1^4
    +
    (s^2+3t^2) x_2^4
    -
    (s^2+5t^2) x_3^4,
    \quad
    X = Z(f).
    \end{align}
    The whole computation remains the same, except that now $\Sigma \simeq S^2$, in the notation from Example \ref{example:1}.
    Hence, $b^2(N)=2$ and $b^3(N)=89+2=91$.
    As before, a $G_2$-manifold with these Betti numbers is listed in \cite[Table 1]{KovalevLee}.
\end{ex}

\begin{ex}
    \label{example:kaihnsa-10-s2-example}
    We now use the same CICY configuration to construct an example with potentially new topology.
    It also shows how real algebraic geometry results can be used for this construction method.
    As in the previous example, the application of Theorem \ref{thm:main1} and Proposition \ref{prop_lefschetz} and Corollary \ref{cor:joyce-karigiannis-for-cy} are the same as in Example \ref{example:1}, so it remains to compute the Betti numbers of the resulting $G_2$-manifold $N$.
    Let $[s:t],[x_0:x_1:x_2:x_3]$ be coordinates on $\P^1 \times \P^3$ as before.
    Let $\sigma_i$ be the $i$-th elementary symmetric polynomial in $x_0, x_1, x_2, x_3$ and set
    \begin{align*}
        h &=
        6 \sigma_1^4-53 \sigma_1^2 \sigma_2+120 \sigma_2^2 - 320 \sigma_4,
        \\
        k &=
        x_0^4+x_1^4+x_2^4+x_3^4,
        \\
        f &=
        (s^2+t^2) h+\epsilon t^2 k,
        \quad
        X=Z(f),
    \end{align*}
    for $0<\epsilon \ll 1$ to be determined later.
    Here $h$ is taken from \cite[Example 6.2]{Kaihnsa2019} which itself refers to \cite[Section 9]{Rohn1913}.

    \emph{Non-empty real locus:}
    $Z_{\R}(h)$ is non-empty by \cite[Example 6.2]{Kaihnsa2019}, say $x\in Z_{\R}(h)$.
    Then $([1:0],x) \in X$.
    \emph{Real discriminant locus:}
    for $[s:t] \in \RP^1$ the fibre of $Z((s^2+t^2)h)$ over $[s:t]$ is isomorphic to $Z(h)$, and a routine calculation shows that it is smooth.
    The property that fibres are smooth is an open condition, so for small $\epsilon$ the fibres of $X$ over real points remain smooth.
    \emph{Smooth total space:}
    Another routine check shows that $Z(h,k)\subset \P^3$ is a smooth
    complete intersection. 
    Since smoothness of hypersurfaces is an open condition in the space of quartic forms, $Z(h+\epsilon k)$ is smooth for all sufficiently small $\epsilon$.
    Suppose $X=Z((s^2+t^2)h+\epsilon t^2k)$ is singular at some $([s:t],x)$.
    The equations
    $f_s=0$ and $f_t=0$ give $2sh=0$ and $2t(h+\epsilon k)=0$.
    If $t=0$, then $h=dh=0$, contradicting smoothness of $Z(h)$. If $s=0$, then
    $h+\epsilon k=d(h+\epsilon k)=0$, contradicting smoothness of $Z(h+\epsilon k)$.
    Finally, if $s,t\neq 0$, then $h=k=0$. 
    If $s^2+t^2\neq 0$, then $dh$ and $dk$ are linearly dependent, contradicting
    smoothness of $Z(h,k)$. If $s^2+t^2=0$, the same equation gives $dk=0$,
    which is impossible since $Z(k)$ is smooth. Hence $X$ is smooth.
    
    \emph{Computation of $G_2$-topology:}
    The computation is identical to Example \ref{example:1} except for the computation of the real locus.
    By \cite[Example 6.2]{Kaihnsa2019}, the real quartic 
    $Z_\R(h)\subset \RP^3$
    is the disjoint union of ten $2$-spheres. 
    The polynomial $s^2+t^2$ has no real zeros, so $Z_{\R}((s^2+t^2)h)={\RP}^1 \times Z_{\R}(h)$ and is smooth because of smoothness of $h$.
    As before, the left hand side is diffeomorphic to $Z_{\R}(f)$ for $\epsilon$ small enough, so altogether $L \simeq \RP^1 \times V_\R(h) \simeq \bigsqcup_{i=1}^{10} (S^1\times S^2)$.
    Hence, $b^0(L)=10$, $b^1(L)=10$.
    So, by Corollary \ref{cor:joyce-karigiannis-for-cy},
    $b^2(N)=0+20=20$
    and $b^3(N)=89+20=109$.
    This pair of Betti numbers does not appear in \cite[Table 1]{KovalevLee}, \cite[Theorems 4.2, 4.6, 4.11]{reidegeld}, \cite[Table 2]{JoyceII} so it may be new.
\end{ex}

\begin{ex}
    Here we give an example using another CICY configuration.
    Let $[s:t],[x_0:\dots:x_4]$ be coordinates on $\P^1 \times \P^4$.
    Let $R \in \R[x_1,x_2,x_3,x_4]$ be a smooth real quartic such that $Z_{\R}(R) \cong S^2_{\# 2} \cup 6S^2$, where $S^2_{\# 2}$ denotes a $2$-sphere with two handles added.
    Such a quartic exists by \cite[Section 3.4]{Kharlamov1976}.
    Define
    \begin{align*}
        l=(s^2+t^2)x_0+(s^2-t^2)x_1+stx_2, \quad
        B=Z(l) \subset \P^1 \times \P^4, \quad
        X_0 = Z(l,R) \subset \P^1 \times \P^4.
    \end{align*}
    \emph{Real structure and fibration:}
    as in the previous examples.
    \emph{Line bundle:}
    we will apply Proposition \ref{prop_lefschetz} to $p: B \rightarrow \P^1$.
    To this end, let $\eta = \mathcal{O}_B(0,4)$.
    The coefficients $s^2+t^2$ and $s^2-t^2$ and $st$ do not vanish simultaneously, so $B_y \simeq \P^3$ for all $y \in \P^1$.
    Thus, $\eta|_{B_y} \cong \mathcal{O}_{\P^3}(4)=\mathcal{O}_{\P^3}(1)^{\otimes 4}$, so (a) holds.
    By the definition of ideal sheaf and \cite[Proposition II.6.18]{Hartshorne1977} we have the short exact sequence for the divisor $B_y$:
    \[
    0
    \rightarrow \mathcal{O}_{\P^4}(3)
    \rightarrow \mathcal{O}_{\P^4}(4)
    \rightarrow \mathcal{O}_{B_y}(4)
    \rightarrow 0.
    \]
    Because $H^1(\P^4, \mathcal{O}_{\P^4}(3))=0$, the long exact sequence in cohomology gives that $H^0(\P^4,\mathcal{O}(4)) \rightarrow H^0(B_y, \eta|_{B_y})$ is surjective.
    This map factors as $H^0(\P^4,\mathcal{O}(4)) \rightarrow H^0(B,\eta) \rightarrow H^0(B_y, \eta|_{B_y})$ and because the whole map is surjective, the second map must be surjective, thereby satisfying (b).

    \emph{Real discriminant locus:}
    For $[s:t] \in \RP^1$ we have $s^2+t^2>0$, so we can solve $l=0$ for $x_0$ and projection onto the last four coordinates gives an isomorphism $B_{[s:t]} \rightarrow \P^3_{[x_1:x_2:x_3:x_4]}$.
    Under this isomorphism, the fibre of $X_0$ over $[s:t]$ is the smooth quartic $Z(R)$, hence the discriminant locus of $X_0 \rightarrow \P^1$ has no real points.
    Also, $X_0(\R) \cong \RP^1 \times Z_{\R}(R) \cong S^1 \times (S^2_{\# 2} \cup 6S^2)$, so the real locus is non-empty.
    Thus, the conditions (a)-(e) of Proposition \ref{prop_lefschetz}are satisfied, and a small perturbation of $X_0$, say $X$, has the property that all singular fibres have exactly one ODP.
    As in the previous examples, we can apply Theorem \ref{thm:main1} and find a metric $\tilde{g}_t$ such that $L=\fix(\sigma)$ admits a nowhere vanishing $1$-form and Corollary \ref{cor:joyce-karigiannis-for-cy} to get a $G_2$-manifold $N$.

    \emph{Computation of $G_2$-topology:}
    by \cite{Green1987}, $h^{1,1}(X)=2$ and $h^{2,1}(X)=86$.
    As before, $\dim H^{1,1}(X)^\sigma=0$ and $H^{1,1}(X)^{-\sigma}=2$.
    From the computation of the real locus above, we have for $L=\fix(\sigma) \subset X$ that $b^0(L)=7$ and $b^1(L)=11$.
    Thus, by Corollary \ref{cor:joyce-karigiannis-for-cy}:
    $b^2(N)=14$ and $b^3(N)=111$.
    As before, this pair of Betti numbers may be new.
\end{ex}

\begin{ex}
    This example uses a special Lagrangian that is a nontrivial fibration over $S^1$.
    Consider the unweighted scroll $\pi: \mathbb{F}=\mathbb{P}_{\mathbb{P}^1}(\mathcal{O}(1) \oplus \mathcal{O} \oplus \mathcal{O}(1) \oplus \mathcal{O}) \rightarrow \P^1$.
    Let $[s:t]$ be coordinates on the base and $x_0,x_1,x_2,x_3$ be relative homogeneous coordinates on $\mathcal{O}(1) \oplus \mathcal{O} \oplus \mathcal{O}(1) \oplus \mathcal{O}$.
    Define
    \begin{align*}
    q_0 &= (s^2+t^2)(s^2+2t^2),\quad
    q_2 = (2s^2+t^2)(3s^2+t^2),
    \\
    Y &= Z(q_0 x_0^4+x_1^4-q_2 x_2^4-x_3^4)
    \subset \mathbb{F}.
    \end{align*}
    Then $Y$ is Calabi-Yau by \cite[Theorem 5.1.1]{Mboya2023}.

    \emph{Real structure and fibration:}
    projection onto the base $\pi: \mathbb{F} \rightarrow \mathbb{P}^1$ defines the fibration and complex conjugation $\sigma:\mathbb{F} \rightarrow \mathbb{F}$ restricts to $Y$, which commutes with complex conjugation on $\P^1$ under $\pi$.
    \emph{Line bundle:}
    Set $\eta =\mathcal{O}_{\mathbb{F}}(4)=-K_{\mathbb{F}}$, where $\mathcal{O}_{\mathbb{F}}(1)$ is the tautological bundle from \cite[Section 3.4]{Mboya2023} and \cite[Section 27.21]{StacksProject}.
    Then, for $y \in \P^1$, $\eta|_{\mathbb{F}_y} = \mathcal{O}_{\mathbb{F}}(4)|_{\mathbb{F}_y} \simeq \mathcal{O}_{\P^3}(4)$, so (a) of Proposition \ref{prop_lefschetz} is satisfied.
    To check (b):
    writing $E=\mathcal{O}(1) \oplus \mathcal{O} \oplus \mathcal{O}(1) \oplus \mathcal{O}$, we have $\pi_*\mathcal{O}_{\mathbb{F}}(4) \cong \operatorname{Sym}^4E$ by \cite[Exercise III.8.4(a)]{Hartshorne1977} or \cite[Example 27.21.3]{StacksProject} and the bundle $E$, and therefore $\operatorname{Sym}^4 E$, is globally generated.
    For $y \in \P^1$ we have $\eta|_{\mathbb{F}_y} \cong \mathcal{O}_{\P^3}(4)$, hence $H^i(\mathbb{F}|_y, \eta|_{\mathbb{F}_y})=0$ for $i > 0$ by the standard cohomology of projective space.

    \emph{Non-empty real locus:}
    over the affine open $U_0 = \{[s:t]:s \neq 0\}$ the bundle $\mathcal{O}(1)$ is trivialised by the section $s$ and $[x_0:x_1:x_2:x_3]$ become homogeneous coordinates of the fibre $Y_{[s:t]}$.
    In these coordinates, $([s:t],[x_0:x_1:x_2:x_3])=([1:0],[1:0:0:1])$ can be seen to be a real point on $Y_0$ which can be perturbed to a point on $Y$.
    \emph{Real discriminant locus:}
    over the affine open $U_0$, the equation for the fibre over $[s:t]$ is a quartic K3 surface which is seen to have a singularities if $q_0=0$ or $q_2=0$, which has no real solutions.
    Analogously, one checks the second affine open.
    \emph{Smooth total space:}
    At $[0:1] \in \P^1$ we have that $q_0$ and $q_2$ are non-zero, so $Y_0$ is smooth.
    It remains to check smoothness of $Y_0$ over $U_0$.
    For $z=t/s$ the defining equation becomes
    \[
    f=(1+z^2)(1+2z^2)x_0^4+x_1^4
      -(2+z^2)(3+z^2)x_2^4-x_3^4.
    \]
    If $q_0 \neq 0$ and $q_2 \neq 0$, then the fibre derivatives being zero gives $x_0=x_1=x_2=x_3=0$, which is a contradiction.
    Thus, $q_0=0$ or $q_2=0$.
    Assume $df(z,[x_0:x_1:x_2:x_3])=0$.
    The zeros of $q_0$ and $q_2$ are simple and disjoint.
    If $q_0=0$, then $q_2 \neq 0$ and the fibre derivatives force $[x_0:x_1:x_2:x_3]=[1:0:0:0]$.
    But then $df=0$ gives $q_0'(z)=0$, contradicting simplicity of zeros of $q_0$.
    The remaining case is checked analogously.

    Thus, the conditions (a)-(e) of Proposition \ref{prop_lefschetz} are satisfied, and as before, Corollary \ref{cor:joyce-karigiannis-for-cy} applies, and we obtain a $G_2$-manifold $N$ by resolving the singularities of $(S^1 \times X)/\langle (-1) \times \sigma \rangle$.

    \emph{Computation of $G_2$-topology:}
    by \cite[Remark 3.2]{Mboya2024}, the Picard number of $Y$ is $\rho(Y)=\dim_{\mathbb{Q}} (H^2(Y,\mathbb{Q}) \cap H^{1,1}(Y))=2$.
    As $Y$ is Calabi-Yau, we have $h^{2,0}(Y)=0$, so $h^{1,1}(Y)=2$.
    Two linearly independent elements are given by the restrictions of the ambient divisors $h=c_1(\pi^*\mathcal{O}_{\P^1}(1))$ and $\xi=c_1(\mathcal{O}_{\mathbb{F}}(1))$.
    If confusion is unlikely, we also write $h$ for $c_1(\mathcal{O}_{\P^1}(1))$.
    
    To check linear independence, consider $ah|_Y+b\xi|_Y=0$ for $a,b \in \C$.
    Since $\mathbb{F}_y \simeq \P^3$ we have $\int_{\mathbb{F}_y} \xi^3=1$, i.e. $\pi_*(\xi^3)=1 \in A^0(\P^1)$, where $A^i$ denote the Chow groups and $\pi_*$ is the integration-over-fibres map.
    Thus, by the projection formula \cite[Lemma 43.27.1(3)]{StacksProject}, we have $\int_{\mathbb{F}} h \xi^3 = \int_{\P^1} h \pi_*(\xi^3)=\int_{\P^1}h=1$.
    Pairing $ah|_Y+b\xi|_Y=0$ with $h\xi|_Y$ gives $0=\int_Y (ah+b\xi)h\xi=\int_{\mathbb{F}} (ah+b\xi)h\xi \cdot 4\xi=4b$, using $h^2=0$ and $[Y]=4\xi$ (see \cite[p.626]{Mboya2024}), hence $b=0$.
    Pairing with $\xi^2|_Y$ shows $a=0$, which proves linear independence.
    
    Complex conjugation acts as $-1$ on both, hence 
    \[
      \dim H^{1,1}(Y)^\sigma=0, \qquad \dim H^{1,1}(Y)^{-\sigma}=2.
    \]

    We now compute $h^{2,1}(Y)$.
    The Chern classes of a projectivised bundle satisfy \cite[Equation 42.37.1.1]{StacksProject}, which in our case gives
    \[
    \xi^4-\pi^* c_1(E) \xi^3+\pi^* c_2(E) \xi^2-\pi^* c_3(E) \xi+\pi^* c_4(E)=0,
    \]
    where
    \[
    E=\mathcal{O}(1) \oplus \mathcal{O} \oplus \mathcal{O}(1) \oplus \mathcal{O}.
    \]
    Since $E$ is a bundle on $\P^1$, we have $c_1(E)=2h$ and $c_i(E)=0$ for $i \geq 2$, hence the equation gives $\xi^4=2h \xi^3$.
    Hence, $\int_{\mathbb{F}} \xi^4=2 \int_{\mathbb{F}} h \xi^3=2$.
    Taking the dual of \cite[Exercise III.8.4.b]{Hartshorne1977} gives the relative Euler sequence for $\pi: \mathbb{F} \rightarrow \P^1$:
    \[
    0
    \rightarrow \mathcal{O}_{\mathbb{F}}
    \rightarrow \pi^* E^* \otimes \mathcal{O}_{\mathbb{F}}(1)
    \rightarrow T_{\mathbb{F}/\P^1}
    \rightarrow 0.
    \]
    Here, $E^*=\mathcal{O}(-1) \oplus \mathcal{O} \oplus \mathcal{O}(-1) \oplus \mathcal{O}$, hence $\pi^* E^* \otimes \mathcal{O}_{\mathbb{F}}(1) \cong (\mathcal{O}_{\mathbb{F}}(1) \otimes \pi^* \mathcal{O}_{\P^1}(-1))^{\oplus 2} \oplus \mathcal{O}_{\mathbb{F}}(1)^{\oplus 2}$.
    Thus $c(T_{\mathbb{F}/\P^1})=(1+\xi-h)^2(1+\xi)^2$.
    Applying \cite[Section 50.12]{StacksProject} to $\pi: \mathbb{F} \rightarrow \P^1$ and taking its dual gives the short exact sequence
    \[
    0
    \rightarrow T_{\mathbb{F}/\P^1}
    \rightarrow T_{\mathbb{F}}
    \rightarrow \pi^* T_{\P^1}
    \rightarrow 0.
    \]
    Using $c(T_{\P^1})=1+2h$ and the above equation for $c(T_{\mathbb{F}/\P^1})$, by taking total Chern classes in this short exact sequence we get $c(\mathbb{F})=(1+2h)(1+\xi-h)^2(1+\xi)^2$.
    Expanding and using $h^2=0$ we get $c_2(\mathbb{F})=6\xi^2+2h\xi$ and $c_3(\mathbb{F})=4\xi^3+6h\xi^2$.
    The normal sequence
    \[
    0
    \rightarrow T_Y
    \rightarrow T_{\mathbb{F}}|_Y
    \rightarrow \mathcal{O}_Y(Y)
    \rightarrow 0
    \]
    gives $c(T_Y)=\frac{c(T_{\mathbb{F}})}{1+4\xi}\big|_Y=(1+c_1(\mathbb{F})+c_2(\mathbb{F})+c_3(\mathbb{F})+\dots)(1-4\xi+16\xi^2-64\xi^3 \pm \dots)|_Y$, where the degree three term is $c_3(T_Y)=(c_3(\mathbb{F})-c_2(\mathbb{F}) 4 \xi+c_1(\mathbb{F}) 16 \xi^2-64 \xi^3)|_Y$.
    The relation $c_1(\mathbb{F})=-K_{\mathbb{F}}=[Y]=4\xi$ gives $c_1(\mathbb{F}) 16 \xi^2-64 \xi^3=0$ and plugging this in yields $c_3(Y)=(c_3(\mathbb{F})-c_2(\mathbb{F})4 \xi)|_Y$.
    Therefore, 
    \begin{align*}
        \chi(Y)
        &=
        \int_Y c_3(Y)=\int_{\mathbb{F}}(c_3(\mathbb{F})-4\xi c_2(\mathbb{F})) 4 \xi
        =
        \int_{\mathbb{F}}(4\xi^3+6h\xi^2-4\xi (6\xi^2+2h\xi)) 4 \xi
        \\
        &=
        \int_{\mathbb{F}}
        (-20 \xi^3-2h\xi^2)4\xi
        =
        -80 \cdot 2-8 \cdot 1 
        =
        -168.
    \end{align*}
    On the other hand, because $Y$ is Calabi-Yau, $\chi(Y)=2(h^{1,1}(Y)-h^{2,1}(Y))$, and $h^{1,1}(Y)=2$ gives $h^{2,1}(Y)=86$.
    
    We now compute the topology of $\fix (\sigma)=L$.
    The real bundles $\mathcal{O}_{\RP^1}(1)$ and $\mathcal{O}_{\RP^1}(-1)$ are isomorphic, so $\mathbb{F}_{\R} \cong \P_{\RP^1}(\mathcal{O}(-1) \oplus \mathcal{O} \oplus \mathcal{O}(-1) \oplus \mathcal{O})$.
    Define the circle $C=\{(u_0,u_1) \in \R^2: u_0^4+u_1^4=1\}$ and $T=(C \times C)/\{\pm 1\}_{\text{diag}}$.
    Define
    \begin{align*}
        \Phi: [0,\pi] \times T &\rightarrow L \subset \P_{\RP^1}(\mathcal{O}(-1) \oplus \mathcal{O} \oplus \mathcal{O}(-1) \oplus \mathcal{O})
    \end{align*}
    by
    \begin{align*}
        &(\theta,[u,v]) = \\
        &\left(
        [\cos \theta:\sin \theta],
        \left[
        (q_0(\cos \theta, \sin \theta))^{-1/4} u_0 e_\theta:
        u_1:
        (q_2(\cos \theta, \sin \theta))^{-1/4} v_0 e_\theta:
        v_1
        \right]
        \right),
    \end{align*}
    where $u=(u_0,u_1), v=(v_0,v_1)$, and $e_\theta=(\cos \theta, \sin \theta) \in \R \cdot (\cos \theta, \sin \theta)=\mathcal{O}_{\RP^1}(-1)_{(\cos \theta, \sin \theta)}$.
    Because $e_{\theta+\pi}=-e_\theta$, this map satisfies $\Phi(\pi,[u,v])=\Phi(0,h([u,v]))$ for $h:T \rightarrow T$ given by $h([(u_0,u_1),(v_0,v_1)])=[(-u_0,u_1),(-v_0,v_1)]$.
    Using $(C \times C)/\{\pm 1\}_{\text{diag}} \simeq T^2$, we have that $\Phi$ descends to a map on the mapping torus
    \[
    \Phi: T^2\rtimes_h S^1 = ([0,\pi] \times T)/((\pi,z) \sim (0,h(z))) \rightarrow L
    \]
    that we denote by the same symbol.
    Under the identification $(C \times C)/\{\pm 1\}_{\text{diag}} \simeq T^2$, we have that $h$ acts as $h_*=-\operatorname{Id}:H_1(T^2,\R) \rightarrow H_1(T^2,\R)$ and $h_*=\operatorname{Id}:H_0(T^2,\R) \rightarrow H_0(T^2,\R)$.
    Computing $b^0$ and $b^1$ of this mapping torus is precisely \cite[Exercise 2.2.30(d)]{Hatcher2002}, and applying the long exact sequence for mapping tori from the exercise gives
    \[
    H_1(T^2,\R)
    \overset{\operatorname{Id}-h_*=2\operatorname{Id}}{\longrightarrow} H_1(T^2,\R)
    \rightarrow H_1(T^2\rtimes_h S^1,\R)
    \rightarrow H_0(T^2,\R)
    \overset{\operatorname{Id}-h_*=0}{\longrightarrow} H_0(T^2,\R),
    \]
    so $H_1(T^2\rtimes_h S^1,\R) \cong H_0(T^2,\R) \cong \R$.
    Also, $T^2\rtimes_h S^1$ is a fibration over $S^1$ with connected fibres, so $b^0(T^2\rtimes_h S^1)=1$.
    
    One checks that $\Phi$ is a diffeomorphism, hence $b^0(L)=1$ and $b^1(L)=1$.
    Hence, for the resolved $G_2$-manifold $N$, we find:
    $b^2(N)=2, b^3(N)=91$.
    These are the same Betti numbers as in Example \ref{example:cicy2}.
    It is plausible that the invariant $d_\pi$ from \cite[p.2]{Crowley2019} can be used to show that the two examples are not homeomorphic, but we were unable to prove this.
\end{ex}

\begin{remark}
    The computation of $h^{2,1}$ for some scrolls was explained in \cite[Remark 5]{Mboya2024}.
    However, the computation for the scroll $\mathbb{F}$ from the previous example was not carried out in the reference.
\end{remark}

\begin{ex}
    On $A=\P^1 \times \P^2$ with coordinates $([s:t],[x_0:x_1:x_2])$ define 
    \[
    q_0=s^4+t^4, \quad
    q_1=s^4+2t^4, \quad
    q_2=2s^4+t^4, \quad
    f=q_0 x_0^6+q_1 x_1^6 + q_2 x_2^6, \quad
    B=Z(f).
    \]
    Let $\rho: Y \rightarrow \P^1 \times \P^2$ be the double cover given as $Y=\{([s:t],x,w): f([s:t],x)=w^2\}$, where $([s:t],x,w) \sim ([\lambda s:\lambda t],\mu x, \lambda^2 \mu^3 w)$ for $\lambda,\mu \in \C^*$.
    Then, by \cite[Lemma 17.1(iii)]{Barth2003} we have $K_Y=\rho^*(K_{\P^1 \times \P^2} \otimes \mathcal{L}) \simeq \mathcal{O}_Y$, where $\mathcal{L}^{\otimes 2}=\mathcal{O}(B)=\mathcal{O}(4,6)$.
    Hence, $Y$ is Calabi-Yau.

    \emph{Real structure and fibration:}
    let $\eta=\mathcal{O}(4,6)$.
    One checks as in Example \ref{example:1} that $\eta$ satisfies conditions (a) and (b) from Proposition \ref{prop_lefschetz}.
    \emph{Non-empty real locus:}
    let $p \in \P^1 \times \P^2$ be real, then $f(p) \in \R$ and $(w,p)=(\pm \sqrt{f(p)}, p) \in Y$ are real points.
    \emph{Real discriminant locus:}
    for $[s:t]$, the fibre $Y_{[s:t]}$ is the K3 surface given as the branched double cover of $\P^2$ branched over a sextic.
    This is smooth if the branching curve is smooth \cite[p.3]{Huybrechts2016}.
    This is the case if the coefficients of $x_0^6$, $x_1^6$, and $x_2^6$ are all nonzero, which is the case for real points $[s:t]$.
    \emph{Smooth total space:}
    if $B$ is smooth, then $Y$ is smooth by the same argument as for the branched K3 surface under the previous point.
    We prove that $B$ is smooth.
    For a contradiction, assume $([s:t],[x_0:x_1:x_2])\in B$ is singular.
    If all $q_0,q_1,q_2 ([s:t]) \neq 0$, then the vanishing of the $x_0,x_1,x_2$ derivatives gives $x_0=x_1=x_2=0$, which is impossible.
    If exactly one $q_j$ for $j \in \{0,1,2\}$ vanishes at $[s:t]$, then the $x_0,x_1,x_2$ derivatives force $x_i=0$ for $i \neq j$, hence $x_j \neq 0$.
    Then the base derivative $d_{\mathbb{P}^1} f=(dq_j) x_j^6$, but $q_j$ has simple zeros, so $dq_j \neq 0$, which gives a contradiction.
    No two of the $q_i$ vanish at the same point, so this finishes the proof.

    Altogether, we have that (a)-(e) are satisfied for $B \subset \P^1 \times \P^2$.
    Thus, we can apply Proposition \ref{prop_lefschetz} to obtain a small perturbation of $B$ satisfying (1)-(5) from Proposition \ref{prop_lefschetz}, say $\tilde{B}$.
    By Lemma \ref{lefschetz_lemma_branched_cover}, we have that the branched cover over $\tilde{B}$ also has properties (1)-(5).
    As in the previous examples, Thus, Theorem \ref{thm:main1} applies, and from Corollary \ref{cor:joyce-karigiannis-for-cy} we obtain a $G_2$-manifold $N$.

    \emph{Computation of $G_2$-topology:}
    Write $\alpha=c_1(\mathcal{O}_{\P^1}(1))$ and $\beta=c_1(\mathcal{O}_{\P^2}(1))$.
    By \cite[Proposition 1.11 and Remark 1.12]{Lanteri1989}, we have $H^{1,1}(Y)=\C \rho^*\alpha \oplus \C \rho^* \beta$.
    Hence $\dim H^{1,1}(Y)^\sigma=0$ and $\dim H^{1,1}(Y)^{-\sigma}=2$, similar to the previous examples.

    We now compute $h^{2,1}(Y)$ with a calculation similar to the previous example.
    Taking the dual of \cite[Theorem II.8.17]{Hartshorne1977} gives the normal sequence
    \[
    0
    \rightarrow T_B
    \rightarrow T_A|_B
    \rightarrow N_{B/A}
    \rightarrow 0.
    \]
    Since $B \subset A$ is a smooth divisor, we have $N_{B/A} \simeq \mathcal{O}_A(B)|_B$.
    Taking Chern classes gives $c(TB)=\frac{c(TA)}{1+[B]} \big|_B$.
    Plugging in $c_1(A)=2\alpha+3\beta$ and $c_2(A)=6\alpha \beta+3\beta^2$ and $[B]=4\alpha+6\beta$, we get $c_2(TB)=(30\alpha \beta+21 \beta^2)|_B$, hence $\chi(B)=\int_B c_2(TB)=\int_A [B] (30\alpha \beta+21 \beta^2)=\int_A(84+180)\alpha \beta^2=264$, where in the last step we used $\alpha^2=\beta^3=0$.
    By \cite[Proposition 1.6]{Lanteri1989}, we have $\chi(Y)=2 \chi(A)-\chi(B)=12-264=-252$.
    Because $Y$ is Calabi-Yau, we have $\chi(Y)=2(h^{1,1}(Y)-h^{2,1}(Y))$, so $h^{2,1}(Y)=128$.

    For computing the Betti numbers of $L=\fix \sigma \subset Y$, take the diffeomorphism
    \begin{align*}
        \Phi : \RP^1 \times S^2 & \rightarrow L
        \\
        ([s:t],u) &\mapsto
        ([s:t],u,\sqrt{f(s,t,u)}),
    \end{align*}
    where the square root is the positive real square root.
    Hence, for $L=\fix(\sigma)$, we have $b^0(L)=1$ and $b^1(L)=1$, so for the resolved $G_2$-manifold $N$ we find
    $b^2(N)=2$ and $b^3(N)=133$.
    As in Example \ref{example:kaihnsa-10-s2-example}, these Betti numbers do not appear in the example lists in the literature and may be new.
\end{ex}

\bibliographystyle{abbrv}
\bibliography{oneforms-library}

\end{document}